\def\R{\mathbb{R}}
\def\C{\mathbb{C}}
\def\xicr{\xi_{\mathrm{cr}}}
\def\Tr{\mathop{\mathrm{Tr}}\nolimits}
\def\Re{\mathop{\mathrm{Re}}\nolimits}
\def\supp{\mathop{\mathrm{supp}}\nolimits}
\def\dist{\mathop{\mathrm{dist}}\nolimits}
\numberwithin{equation}{section}
\newtheorem{theorem}{Theorem}[section]
\newtheorem{lemma}[theorem]{Lemma}
\newcommand{\ud}{\,\mathrm{d}}
\begin{document}
\title{A vector equilibrium problem for the two-matrix model in the quartic/quadratic case}
\author{Maurice Duits\footnotemark[1] \and Dries Geudens\footnotemark[2] \and Arno B.J. Kuijlaars\footnotemark[3]}

\renewcommand{\thefootnote}{\fnsymbol{footnote}}
\footnotetext[1]{California Institute of Technology,
Mathematics 253-37, Pasadena, CA 91125, U.S.A. email: mduits\symbol{'100}caltech.edu}
\footnotetext[2]{Department of Mathematics, Katholieke
Universiteit Leuven, Celestijnenlaan 200B, B-3001 Leuven, Belgium.
email: dries.geudens\symbol{'100}wis.kuleuven.be. Research Assistant of the Fund for Scientific
Research - Flanders (Belgium). }
\footnotetext[3]{Department of Mathematics, Katholieke
Universiteit Leuven, Celestijnenlaan 200B, B-3001 Leuven, Belgium.
email: arno.kuijlaars\symbol{'100}wis.kuleuven.be.
Supported in part by FWO-Flanders project G.0455.04, by K.U. Leuven
research grant OT/08/33, by the Belgian Interuniversity Attraction
Pole P06/02,
and by a grant from the Ministry of Education and Science of
Spain, project code MTM2005-08648-C02-01.}

\date{\ }
\maketitle

\begin{abstract}
We consider the two sequences of biorthogonal polynomials $(p_{k,n})_{k=0}^\infty$
and $(q_{k,n})_{k=0}^\infty$ related to the Hermitian two-matrix model
with potentials $V(x) = x^2/2$ and $W(y) = y^4/4 + ty^2$.
From an asymptotic analysis of the coefficients in the recurrence relation
 satisfied by these polynomials, we obtain the limiting distribution
of the zeros of the polynomials $p_{n,n}$ as $n \to \infty$.
The limiting zero distribution is characterized as the first measure
of the minimizer in a vector equilibrium problem involving three measures
which for the case $t=0$ reduces to the vector equilibrium problem
that was given recently by two of us.
A novel feature is that for $t < 0$ an external field is active
on the third measure which introduces a new type of critical behavior
for a certain negative value of $t$.

We also prove a general result about the interlacing of
zeros of biorthogonal polynomials.
\end{abstract}

\section{Introduction}

\subsection{Two-matrix model and biorthogonal polynomials}

The Hermitian two-matrix model is a probability measure
\begin{equation}
\frac{1}{Z_n}e^{-n\Tr(V(M_1)+W(M_2)-\tau M_1M_2)}\ud M_1 \ud M_2
\label{eq: 2mm measure}
\end{equation}
defined on the space of pairs of $n \times n$ Hermitian matrices
$(M_1,M_2)$. Here, $\ud M_1 \ud M_2$ is the standard Lebesgue
measure on pairs of Hermitian matrices, $V$ and $W$ are the two
potentials in the model which are typically polynomials of even degree with a positive leading
coefficient, $\tau \neq 0$ is a coupling
constant and $Z_n$ is the normalizing constant
\[
    Z_n = \iint e^{-n\Tr (V(M_1)+W(M_2)-\tau M_1M_2)} \ud M_1 \ud M_2.
\]

With the two-matrix model \eqref{eq: 2mm measure}, we associate
two sequences of  monic polynomials $(p_{k,n})_{k=0}^\infty$ and
$(q_{k,n})_{k=0}^\infty$, where $\deg p_{k,n} = \deg q_{k,n} = k$,
called biorthogonal polynomials, defined by the property
\begin{equation} \label{eq: biorthogonality general}
\int_{-\infty}^{\infty} \int_{-\infty}^{\infty}
p_{k,n}(x)q_{j,n}(y)e^{-n(V(x)+W(y)-\tau x y)} \ud x \ud y=0,
\qquad k \neq j.
\end{equation}
Existence and uniqueness of these polynomials was proved by
Ercolani and McLaughlin in \cite{EMc}. They also showed that the
zeros of these polynomials are real and simple. It will be one of our results that the
zeros interlace, see Theorem \ref{th: interlacing}.

It is well-known that the eigenvalue correlations of the matrices $M_1$ and $M_2$ from
\eqref{eq: 2mm measure} are determinantal with correlation kernels
that can be expressed in terms of the biorthogonal polynomials and
their transforms, see \cite{BEH1,BEH2,EMc,EyM,MS}.
As an example of these relations we have that
\begin{align*}
\mathbb E \left[ \det(xI_n-M_1) \right] & = p_{n,n}(x),  \\
\mathbb E \left[ \det(yI_n-M_2) \right] & = q_{n,n}(y),
\end{align*}
which show that the diagonal biorthogonal polynomials $p_{n,n}$
and $q_{n,n}$ can be considered as  `typical' characteristic
polynomials for $M_1$ and $M_2$, respectively.  In this sense the
zeros of $p_{n,n}$ are typical eigenvalues of the matrix $M_1$.
This explains our interest in these zeros as $n \to \infty$.

\subsection{A vector equilibrium problem} \label{subsec: vector equilibrium}

In \cite{DK2} two of us studied the limiting eigenvalue behavior
of the matrix $M_1$ in the two-matrix model \eqref{eq: 2mm
measure} for the case of an even polynomial $V$ with  positive
leading coefficient and
\[ W(y) = \tfrac{1}{4} y^4. \]
The biorthogonal polynomial $p_{n,n}$ associated with this model
can be characterized by a Riemann-Hilbert problem of size $4
\times 4$ \cite{BEH2,KMc}. The Deift-Zhou steepest descent method
was successfully applied to the Riemann-Hilbert problem from
\cite{KMc}. A crucial ingredient in \cite{DK2} is the introduction
of a vector equilibrium problem with external field and upper
constraint that describes the limiting mean eigenvalue distribution.

To state the vector equilibrium problem we use notions
from logarithmic potential theory \cite{SaTo}. We define
the logarithmic energy of a finite positive measure $\nu$ on
$\mathbb C$ as
\[
I(\nu)=\iint \log \frac{1}{|x-y|} \ud \nu (x) \ud \nu (y),
\]
and the logarithmic potential as
\begin{equation}
U^\nu(z)=\int \log \frac{1}{|z-x|} \ud \nu (x).
\label{eq: logarithmic potential}
\end{equation}
If $\nu_1$ and $\nu_2$ are positive measures on $\mathbb C$ with
$I(\nu_1),I(\nu_2)< \infty$, we also define their mutual
logarithmic energy as
\[
I(\nu_1,\nu_2)=\iint \log \frac{1}{|x-y|} \ud \nu_1 (x) \ud \nu_2 (y).
\]

The vector equilibrium problem from \cite{DK2} asks to minimize
the energy functional
\[
    I(\rho_1)-I(\rho_1,\rho_2)+I(\rho_2)-I(\rho_2,\rho_3) +I(\rho_3)+
    \int \left(V(x)-\frac{3}{4}|\tau x |^{4/3} \right) \ud \rho_1(x),
\]
among all measures $\rho_1$, $\rho_2$ and $\rho_3$ with finite logarithmic
energy that satisfy
\begin{itemize}
\item[(a)] $\rho_1$ is supported on $\R$ and $\rho_1(\R)=1$;
\item[(b)] $\rho_2$ is supported on $i\R$ and $\rho_2(i\R)=2/3$;
\item[(c)] $\rho_3$ is supported on $\R$ and $\rho_3(\R)=1/3$;
\item[(d)] $\rho_2$ satisfies the constraint $\rho_2 \leq \sigma$
    where $\sigma$ is the unbounded measure on $i\R$ defined as
\[
\mathrm d \sigma(z)= \frac{\sqrt 3}{2\pi i}\tau^{4/3}|z|^{1/3} \ud z, \qquad z \in i\R.
\]
Here, $\mathrm d z$ is the complex line element on $i\R$.
\end{itemize}

In \cite{DK2} it is shown that there is a unique minimizer
$(\nu_1, \nu_2, \nu_3)$ of the vector equilibrium problem. The
measure $\nu_1$ is supported on a finite union of disjoint
intervals. For the case of one interval (one-cut case) it was
shown in \cite{DK2} that the density of $\nu_1$ is the limiting
mean density of the eigenvalues of $M_1$. See \cite{Mo} for the
extension to the multi-cut case.

\subsection{Aim of this paper}
It is the aim of this paper to give a new perspective on the nature
of the above vector equilibrium problem. In \cite{DK2} the vector
equilibrium problem was simply posed out of the blue, while in the
present work it arises after certain calculations.

In addition, we derive a similar vector equilibrium problem for the
case that
\begin{equation}
    V(x)=\frac{x^2}{2} \qquad \textrm{and} \qquad W(y)=\frac{y^4}{4}+t\frac{y^2}{2},
\label{eq:quartic/quadratic potentials}
\end{equation}
where $t \in \mathbb R$ is a real parameter. When $t < 0$
this vector equilibrium problem has the novel feature that an
external field is acting on the third measure as well.

Due to the fact that $V(x)$ is quadratic in \eqref{eq:quartic/quadratic potentials}
 the second sequence $(q_{k,n})_k$ of biorthogonal polynomials is actually a sequence
of orthogonal polynomials that satisfy a three term
recurrence relation. The biorthogonal polynomials $p_{k,n}$ also
satisfy a recurrence relation with recurrence coefficients that
can be expressed in terms of the recurrence coefficients for $q_{k,n}$.
We are going to analyze these recurrence coefficients as $n \to \infty$
and obtain the vector equilibrium problem from this analysis.
This approach is similar in spirit to the
analysis of \cite{KR} for the recurrence coefficients of multiple
orthogonal polynomials in a model of non-intersecting squared
Bessel paths.

Furthermore, we prove that the first component $\nu_1$ of the
vector of measures $(\nu_1,\nu_2,\nu_3)$ minimizing this vector
equilibrium problem is equal to the limiting zero distribution
of the diagonal polynomials $p_{n,n}$ as $n \to \infty$.
The measure $\nu_1$ is also the limiting mean eigenvalue distribution
of the matrices $M_1$ in the two-matrix model.
This will be proved in a forthcoming paper.

In the next section we give a more detailed description of the
main results in this paper.

\section{Statement of main results}

\subsection{Interlacing zeros}

Our first result deals with biorthogonal polynomials defined by \eqref{eq: biorthogonality general}
with general potentials $V$ and $W$. Our result is that the zeros of consecutive biorthogonal
$p_{k,n}$ and $p_{k+1,n}$ are interlacing. This was proved
by Woerdeman \cite{Woe} for a special case.
Two ordered sequences of real numbers
$\alpha_1, \ldots, \alpha_k$ and $\beta_1, \ldots, \beta_{k+1}$
are said to interlace if
\[ \beta_1 < \alpha_1 < \beta_2 < \alpha_2  < \cdots < \alpha_{k-1} <
 \beta_{k} < \alpha_k < \beta_{k+1}.
\]

\begin{theorem} \label{th: interlacing}
Take $\tau \neq 0$ and suppose that $V$ and $W$ are functions for which the integrals
\[
\int_{-\infty}^{\infty}\int_{-\infty}^{\infty}
x^ky^je^{-n(V(x)+W(y)-\tau x y)} \ud x \ud y, \quad
k=0,1,2,\ldots, \quad j=0,1,2,\ldots,
\]
converge. Define the two sequences of monic biorthogonal
polynomials $(p_{k,n})_{k=0}^\infty$ and $(q_{k,n})_{k=0}^\infty$
as in \eqref{eq: biorthogonality general}. Then the following statements hold for every $k=1,2, \ldots$
\begin{itemize}
\item[\rm(a)] The zeros of $p_{k,n}$ and $p_{k+1,n}$ interlace, and
similarly, the zeros of $q_{k,n}$ and $q_{k+1,n}$ interlace.
\item[\rm(b)] If the potentials $V$ and $W$ are even, then the positive zeros
of $p_{k,n}$ and $p_{k+2,n}$ interlace, and similarly, the positive zeros of $q_{k,n}$ and $q_{k+2,n}$ interlace.
\end{itemize}
\end{theorem}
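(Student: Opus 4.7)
My plan for part (a) is to exploit the total positivity of the kernel $\mu(x,y)=e^{-n(V(x)+W(y)-\tau xy)}$, which follows from Schoenberg's classical fact that $e^{n\tau xy}$ is strictly totally positive; consequently $\det[\mu(u_a,v_b)]_{a,b=1}^{m}>0$ whenever $u_1<\cdots<u_m$ and $v_1<\cdots<v_m$. Starting from the determinantal (Cram\'er) formula for $p_{k,n}$ in terms of the biorthogonal moments $H_{ij}=\iint x^iy^j\mu(x,y)\,dx\,dy$ and applying Andr\'eief's identity twice, I would derive the Heine-type representation
\[
p_{k,n}(z)=\frac{1}{Z_k}\int\prod_{a=1}^k(z-u_a)\,\Delta(u)\Delta(v)\,\det\bigl[\mu(u_a,v_b)\bigr]_{a,b=1}^k\,du\,dv.
\]
After restricting to the ordered cone $u_1<\cdots<u_k$, $v_1<\cdots<v_k$, the density $\Delta(u)\Delta(v)\det[\mu(u_a,v_b)]$ is strictly positive, so that $p_{k,n}(z)$ is a positive mixture of monic polynomials $\prod_a(z-u_a)$.

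Let $x_1<\cdots<x_{k+1}$ denote the zeros of $p_{k+1,n}$, which are real and simple by \cite{EMc}. Expanding $p_{k,n}$ in the Lagrange basis $\ell_i(x)=p_{k+1,n}(x)/(x-x_i)$ gives $p_{k,n}(x)=\sum_{i=1}^{k+1}\alpha_i\ell_i(x)$ with $\alpha_i=p_{k,n}(x_i)/p_{k+1,n}'(x_i)$; the interlacing is then equivalent to the coefficients $\alpha_i$ having a common sign, or equivalently that the Wronskian
\[
\mathcal{W}(z)=p_{k+1,n}'(z)\,p_{k,n}(z)-p_{k+1,n}(z)\,p_{k,n}'(z)
\]
is strictly positive on $\R$. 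Indeed, evaluating at $z=x_i$ gives $\mathcal{W}(x_i)=p_{k+1,n}'(x_i)\,p_{k,n}(x_i)$, and since $p_{k+1,n}'$ alternates in sign at consecutive simple zeros, so must $p_{k,n}(x_i)$, forcing a zero of $p_{k,n}$ inside each open interval $(x_i,x_{i+1})$. To establish $\mathcal{W}(z)>0$, I would combine the Heine representations of $p_{k,n}$ and $p_{k+1,n}$ into a single $(2k+1)$-particle integral and use a Cauchy--Binet type manipulation to recognize the integrand as a sum of squares weighted by a density that is positive thanks to the strict total positivity of $\mu$. This combinatorial bookkeeping---merging the $k$- and $(k{+}1)$-particle ordered-cone integrals and verifying manifest positivity of the resulting expression---is the main obstacle, since the natural decomposition of the Wronskian via the Lagrange basis has terms of mixed sign that must cancel only after integration.

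For part (b), I would reduce to part (a) via a squared-variable substitution. When $V,W$ are even, uniqueness of the biorthogonal polynomials forces the parity $p_{k,n}(-x)=(-1)^kp_{k,n}(x)$, so $p_{2m,n}(x)=P_m(x^2)$ and $p_{2m+1,n}(x)=xR_m(x^2)$ for monic polynomials $P_m,R_m$ of degree $m$. The positive zeros of $p_{k,n}$ and $p_{k+2,n}$ are then in bijection, via $z=x^2$, with the zeros of $P_m,P_{m+1}$ or of $R_m,R_{m+1}$ in $(0,\infty)$. Splitting $e^{n\tau xy}=\cosh(n\tau xy)+\sinh(n\tau xy)$ in the biorthogonality relation and using parity to kill one of the two terms in each case, one obtains that $\{P_m\}$ and $\{R_m\}$ are biorthogonal polynomials on $(0,\infty)\times(0,\infty)$ with respect to the kernels
\[
\tilde\mu_{\mathrm{ev}}(z,w)=\frac{e^{-nV(\sqrt z)-nW(\sqrt w)}\cosh(n\tau\sqrt{zw})}{\sqrt{zw}},\qquad
\tilde\mu_{\mathrm{od}}(z,w)=e^{-nV(\sqrt z)-nW(\sqrt w)}\sinh(n\tau\sqrt{zw}).
\]
Because $\cosh(a\sqrt{zw})=\sum_{j\ge0}\frac{a^{2j}}{(2j)!}z^jw^j$ and $\sinh(a\sqrt{zw})=\sqrt{zw}\sum_{j\ge0}\frac{a^{2j}}{(2j+1)!}z^jw^j$ are positive sums of products of the ECT-system $\{z^j\}$ with itself, Karlin's theorem yields the strict total positivity of $\tilde\mu_{\mathrm{ev}}$ and $\tilde\mu_{\mathrm{od}}$ on $(0,\infty)^2$. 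Applying part (a) to each of these two systems then gives the interlacing of positive zeros of $p_{k,n}$ and $p_{k+2,n}$.
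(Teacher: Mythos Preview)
Your plan correctly isolates the Wronskian inequality $\mathcal{W}(z)=p_{k+1,n}'(z)\,p_{k,n}(z)-p_{k+1,n}(z)\,p_{k,n}'(z)>0$ as the heart of part~(a), but this is exactly where the argument is incomplete: you flag the merging of the two Heine integrals and the recognition of a positive integrand as ``the main obstacle'' and do not carry it out. Combining the Heine representations of $p_{k,n}$ and $p_{k+1,n}$ naively produces an integrand containing the factor $\sum_c(z-\tilde u_c)^{-1}-\sum_c(z-u_c)^{-1}$, which has no evident sign, and no concrete Cauchy--Binet identity turning it into a sum of squares is supplied. The paper reaches the very same inequality by a short device that bypasses total positivity altogether: if a nontrivial combination $Ap_{k,n}+Bp_{k+1,n}$ had a real double zero $x_0$, one writes it as $(x-x_0)^2 r(x)$ with $\deg r\le k-1$, absorbs $(x-x_0)^2$ into the potential via $V(x)\mapsto V(x)-\tfrac{2}{n}\log|x-x_0|$, and observes that $r$ then satisfies $k$ biorthogonality relations against $1,y,\dots,y^{k-1}$ for the modified weight. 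The Ercolani--McLaughlin uniqueness theorem forces $r\equiv 0$, a contradiction. Hence $\mathcal{W}$ is nowhere zero on $\R$, and its behaviour as $x\to+\infty$ pins down the sign.

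Your reduction for (b) via $z=x^2$ and the $\cosh/\sinh$ split is correct in outline and is a genuinely different route from the paper's (which simply repeats the modified-potential trick with the factor $(x^2-x_0^2)^2$, using that a positive double zero is matched by its negative by parity). Your approach has the appealing feature that (b) would follow at once from (a) applied to a general strictly totally positive kernel on $(0,\infty)^2$, since $\sum_{j\ge0}c_j z^{j+\alpha}w^{j+\alpha}$ with $c_j>0$ is STP and covers both $\cosh(a\sqrt{zw})/\sqrt{zw}$ and $\sinh(a\sqrt{zw})$. But this modularity only pays off once (a) is actually proved for arbitrary STP kernels; as it stands, with (a) unproved even for the original weight, part (b) inherits the same gap.
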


Theorem \ref{th: interlacing} is proved in Section \ref{sec: interlacing}.

\subsection{Limit of zero counting measures}
In the rest of the paper we restrict ourselves to the quadratic
and quartic potentials \eqref{eq:quartic/quadratic potentials}.
The biorthogonal polynomials associated with this model are thus
defined by
\begin{equation}
\int_{-\infty}^{\infty} \int_{-\infty}^{\infty}
p_{k,n}(x)q_{j,n}(y)e^{-n(x^2/2+y^4/4+ty^2/2-\tau x y)} \ud x \ud
y=0, \qquad k \neq j. \label{eq: biorthogonality specific}
\end{equation}
We also assume without loss of generality that
\[ \tau > 0. \]

As a second result, we will show that in this case the limiting
zero distribution of the diagonal polynomials $p_{n,n}$ exists.
For a polynomial $P$ of degree $n$, we introduce the normalized
zero counting measure
\[
\nu(P)=\frac1n \sum_{P(x)=0}\delta_x,
\]
where the sum is taken over all zeros of $P$ counted with
multiplicity. We say that a sequence of measures $\nu_n$ converges
weakly to the measure $\nu$ if
\[
\int f \ud \nu_n \to \int f \ud \nu,
\]
for every bounded continuous function $f$.

\begin{theorem} \label{th: zero distribution}
There exists a Borel probability measure $\nu_1$ with $\supp
(\nu_1) \subset \R$ such that $\nu_1$ is the limiting zero
distribution of the diagonal polynomials $p_{n,n}$, i.e.
\[
\lim_{n \to \infty} \nu(p_{n,n})=\nu_1,
\]
where the limit is in the sense of weak convergence of measures.
\end{theorem}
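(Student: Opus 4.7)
\medskip
\noindent\textbf{Proof plan for Theorem \ref{th: zero distribution}.} My plan is to deduce the existence of the limiting zero distribution from an asymptotic analysis of the recurrence coefficients of the $p_{k,n}$, following the strategy announced at the end of the Introduction. The starting observation is that because $V(x)=x^2/2$ the $x$-integral in \eqref{eq: biorthogonality specific} is Gaussian; completing the square via $x^2/2-\tau xy = (x-\tau y)^2/2-\tau^2y^2/2$ rewrites the biorthogonality as
\[
\int_{-\infty}^{\infty} \widetilde p_{k,n}(y)\,q_{j,n}(y)\,e^{-n\tilde W(y)}\ud y = 0, \qquad k\neq j,
\]
where $\tilde W(y) = \tfrac14 y^4 + \tfrac12(t-\tau^2)y^2$ and $\widetilde p_{k,n}(y)$ is the degree-$k$ polynomial obtained by convolving $p_{k,n}$ with a Gaussian and substituting $x\mapsto\tau y$. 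Since $\{\widetilde p_{k,n}\}$ spans the polynomials, this says precisely that the $q_{k,n}$ are the monic orthogonal polynomials for the varying weight $e^{-n\tilde W}$, hence obey a three-term recurrence $yq_{k,n}=q_{k+1,n}+\beta_{k,n}q_{k,n}+\gamma_{k,n}q_{k-1,n}$. A second integration by parts in \eqref{eq: biorthogonality specific}, based on $V'(x)=x$ and $W'(y)=y^3+ty$, yields a banded recurrence for the $p_{k,n}$ whose coefficients are explicit polynomial/rational expressions in the $\beta_{k,n}, \gamma_{k,n}$.

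The main analytic step is the asymptotic analysis of $(\beta_{k,n},\gamma_{k,n})$ as $n\to\infty$ in the scaling $k/n\to s\in(0,1]$. Since the $q_{k,n}$ are orthogonal polynomials for the varying quartic external field $\tilde W$ carrying total mass~$s$, one expects and can prove, either via the equilibrium measure for $\tilde W$ or via a Riemann-Hilbert steepest descent analysis, that $\beta_{k,n}\to\beta(s)$ and $\gamma_{k,n}\to\gamma(s)$, where $\beta,\gamma$ satisfy the string (Freud) equations associated with $\tilde W$. Propagating these limits through the explicit formulas from the previous step produces continuous limit functions for the bands of the $p$-recurrence. The hard part is precisely this asymptotic analysis: as $t$ decreases, the support of the equilibrium measure for $\tilde W$ changes topology (a one-cut/two-cut transition), which in turn produces the new critical behaviour of the three-measure problem; handling these regimes, and in particular the critical value of $t<0$, uniformly in~$s$, is the main obstacle.

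Once the limits of the recurrence coefficients are established, the conclusion is routine. The zeros of $p_{n,n}$ are the eigenvalues of the $n\times n$ principal truncation of the banded multiplication operator defined by the $p$-recurrence, so $\nu(p_{n,n})$ is its normalised eigenvalue counting measure. Entrywise convergence of the bands under the scaling $k/n\to s$, combined with a standard result in the spirit of Kuijlaars-Van Assche for varying recurrence coefficients (or a direct moment computation using $\int x^m\ud\nu(p_{n,n}) = \tfrac1n\Tr L_n^m$, where $L_n$ is the truncated recurrence matrix), gives weak convergence
\[
\nu(p_{n,n}) \;\longrightarrow\; \nu_1 \;=\; \int_0^1 \mu_s\,\ud s,
\]
where $\mu_s$ is the spectral measure of the constant-coefficient limit operator at parameter~$s$. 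Boundedness of the limiting bands (inherited from the confining potential $\tilde W$) ensures that $\supp(\nu_1)$ is compact in $\R$, while $\int\ud\mu_s=1$ for every $s$ yields $\nu_1(\R)=1$. The identification of $\nu_1$ with the first component of the minimiser of the vector equilibrium problem of Section \ref{subsec: vector equilibrium} is a separate step that is the real substance of the paper.
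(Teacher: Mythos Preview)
Your overall strategy coincides with the paper's: use the recurrence for $p_{k,n}$, analyse the limiting recurrence coefficients, and then invoke a Kuijlaars--Van~Assche/Kuijlaars--Rom\'an type result to read off the weak limit of $\nu(p_{n,n})$ as an average of ``local'' spectral measures. But there is a genuine gap.

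You assume that after obtaining $\beta_{k,n}\to\beta(s)$, $\gamma_{k,n}\to\gamma(s)$ for the $q$-recurrence you can ``propagate these limits through the explicit formulas'' and obtain continuous limit functions for the bands of the $p$-recurrence. In the two-cut regime this is false. For the quartic weight $e^{-n\tilde W}$ with $t-\tau^2<0$ and small $s=k/n$, the recurrence coefficients of $q_{k,n}$ do \emph{not} converge: they have $2$-periodic limiting behaviour (different limits along even and odd $k$), as established by Bleher--Its. Since the coefficients $b_{k,n},c_{k,n}$ of the five-term $p$-recurrence are expressed through products of several consecutive $a_{k,n}$'s, they inherit this oscillation and likewise fail to have limits as $k/n\to\xi<\xicr$. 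The hypothesis of any Kuijlaars--Van~Assche type theorem (and of your moment argument) then breaks down on the range $\xi\in(0,\xicr)$. The paper's remedy is to square the recurrence, obtaining a nine-term relation in $x^2$ whose coefficients are symmetric functions of the even/odd limits and therefore \emph{do} converge; combined with the parity $p_{2l,n}(x)=r_{l,n}(x^2)$ this reduces the problem to one in which the Kuijlaars--Rom\'an theorem applies. Without some device of this kind your plan does not go through for $t<\tau^2$.

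A second, smaller gap: even in the one-cut regime the result you need is not quite ``routine''. The Kuijlaars--Rom\'an theorem for banded recurrences requires, besides convergence of the bands, (i) interlacing of the zeros of $p_{k,n}$ and $p_{k+1,n}$ and (ii) that the set $\Gamma_1(\xi)=\{z:|w_1(z;\xi)|=|w_2(z;\xi)|\}$ associated with the limiting symbol lie in $\R$. Neither is automatic: (i) is precisely Theorem~\ref{th: interlacing}, and (ii) needs the detailed analysis of the symbol carried out in Sections~\ref{sec: 1cc}--\ref{sec: 2cc}. Your proposal should at least flag these as ingredients to be supplied.
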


The proof of Theorem \ref{th: zero distribution} is given in Section \ref{sec: zero distribution}. It is constructive. For an explicit formula of the limiting zero distribution $\nu_1$, we refer to \eqref{eq: nu1}.

Note that Theorem \ref{th: zero distribution} is about the
limiting distribution of the zeros of the biorthogonal polynomial
$p_{n,n}$. The measure  $\nu_1$ is also the limiting mean
distribution of the eigenvalues of the matrix $M_1$ from the
two-matrix model, but this is not proved in this paper. This will
follow from an analysis of the Riemann-Hilbert problem as in
\cite{DK2}, which is under current investigation.

\subsection{Vector equilibrium problem} \label{subsection: vector equilibrium problem}

Our final result is that the limiting zero distribution $\nu_1$
can be characterized by a vector equilibrium problem depending
on external fields $V_1$ and $V_3$, and
on a constraint $\sigma$. These objects will be described next in
terms of the solutions of the equation
\begin{equation}
    \omega^3 + t \omega = \tau z.
\label{eq: omegas}
\end{equation}
For $t=0$, the vector equilibrium problem reduces to the vector equilibrium
problem described in Section \ref{subsec: vector equilibrium}.

\paragraph{External field $V_1$ on $\mathbb R$.}
For  $z = x \in \R$, the equation \eqref{eq: omegas} has either one
or three real solutions. We use $\omega_1(x)$ to denote the
real solution with the largest absolute value.
This is the only real solution for $t \geq 0$
and also for $t < 0$ and $z=x$ with $|x| > x^*$ where
\begin{equation}
x^* = x^*(t) = \frac{2 (-t)^{3/2}}{3\sqrt 3 \, \tau}, \qquad t \leq 0.
\label{eq: x star}
\end{equation}
For $t < 0$ and $ - x^* \leq x \leq x^*$ there are three real solutions of \eqref{eq: omegas}
which we denote by $\omega_j(x)$, $j=1,2,3$, and which we  number such that
\begin{equation}
    |\omega_1(x)| \geq |\omega_2(x)| \geq |\omega_3(x)|.
\label{eq: omega123ordering}
\end{equation}
This means in fact that $\omega_2(x) < \omega_3(x) < 0 < \omega_1(x)$ if $x \in (0, x^*)$ and
$\omega_1(x) < 0 < \omega_3(x) < \omega_2(x)$ if $x \in (-x^*, 0)$.

In both cases, the external field $V_1$ is defined by
    \begin{align} \nonumber
     V_1(x) & = \frac{x^2}{2} + \min_{y \in \mathbb R} (W(y) - \tau xy) \\
        & = \frac{x^2}{2}-\frac{3}{4} \omega_1(x)^4 - \frac{1}{2} t \omega_1(x)^2 , \qquad x \in \R.
    \label{eq: V1}
        \end{align}
The second identity in \eqref{eq: V1} comes from the fact that the minimum of $W(y) - \tau xy$
is taken at $y = \omega_1(x)$ and $\tau x= \omega_1(x)^3 + t\omega_1(x)$ by \eqref{eq: omegas}.

\paragraph{External field $V_3$ on $\mathbb R$}
The external field $V_3$ vanishes identically for $t \geq 0$
\begin{equation}
     V_3(x) \equiv 0, \qquad \text{for } x \in \mathbb R,  \text{ if } t \geq 0.
\label{eq: V3 0}
\end{equation}
For $t < 0$ and $x \in \mathbb R$, we define
\begin{equation}
V_3(x)   = \begin{cases} \frac{3}{4}  \omega_2(x)^4 + \frac{1}{2} t  \omega_2(x)^2-\frac{3}{4}  \omega_3(x)^4 -
    \frac{1}{2} t  \omega_3(x)^2,  &\text{for } |x| < x^*, \\
0, & \textrm{for }|x| \geq x^*.
\end{cases}
\label{eq: V3}
\end{equation}
where $x^*$ is given in \eqref{eq: x star}, and $\omega_2(x)$ and $\omega_3(x)$
are the solutions of \eqref{eq: omegas} that satisfy \eqref{eq: omega123ordering}.

While $V_1(x)$ is related to the global minimum of the function
\begin{equation}
     W(y)-\tau x y = \frac{y^4}{4}+t\frac{y^2}{2}-\tau x y, \qquad y \in \mathbb R,
\label{eq: minimumWy}
\end{equation}
$V_3(x)$ can be interpreted as the positive difference
between the local maximum  and the other local minimum of \eqref{eq: minimumWy} on $\mathbb R$,
which indeed exist if and only if $t < 0$ and $|x| < x^*$.

\paragraph{Upper constraint $\sigma$ on $i \R$}
For $z = iy \in i\R$, the equation \eqref{eq: omegas} has either one or
three purely imaginary solutions. There are three purely imaginary
solutions if and only if $t \geq 0$ and $|y| \leq y^*$ where
\begin{equation} \label{eq: ystar}
    y^* = y^*(t) = \frac{2t^{3/2}}{3\sqrt 3 \tau}, \qquad t \geq 0.
    \end{equation}
Otherwise there is only one purely imaginary solution and the two other solutions
are located symmetrically with respect to the imaginary axis.
We then let $\omega_1(z)$ be the solution of \eqref{eq: omegas} with
positive real part. For convenience we put
\[ y^*=0, \qquad \text{if } t < 0. \]

The upper constraint $\sigma$ is defined as follows.
The support of  $\sigma$  is
\begin{equation} \label{eq: supportsigma}
\supp(\sigma)  =   (-i\infty,-i y^*] \cup  [iy^*, i \infty),
\end{equation}
and $\sigma$ has the density on $\supp(\sigma)$ given by
\begin{equation}
    \frac{\mathrm d \sigma(z)}{|\mathrm d z|} =
    \frac{\tau}{\pi} \Re \omega_1(z), \qquad z \in \supp (\sigma),
\label{eq: sigma}
\end{equation}
for every fixed $t \in \R$.

Now we  state our final main result.

\begin{theorem} \label{th: equilibrium problem}
The measure $\nu_1$ from Theorem {\rm \ref{th: zero distribution}}
is the first component of the unique vector of measures
$(\nu_1,\nu_2,\nu_3)$ minimizing the energy functional
\begin{multline*}
E(\rho_1,\rho_2,\rho_3) = I(\rho_1)-I(\rho_1,\rho_2) +
I(\rho_2)-I(\rho_2,\rho_3)+ I(\rho_3) \\ + \int V_1(x) \ud \rho_1(x)
+ \int V_3(x) \ud \rho_3(x),
\end{multline*}
among all vectors  $(\rho_1, \rho_2, \rho_3)$ of measures
with finite logarithmic energy satisfying
\begin{itemize}
 \item[\rm (a)] $\rho_1$ is supported on $\R$ and $\rho_1(\R)=1$,
 \item[\rm (b)] $\rho_2$ is supported on $i\R$ and $\rho_2(i\R)=2/3$,
 \item[\rm (c)] $\rho_3$ is supported on $\R$ and $\rho_3(\R)=1/3$,
 \item[\rm (d)] $\rho_2$ satisfies the constraint $\rho_2 \leq \sigma$.
\end{itemize}
Here $V_1$ and $V_3$ are defined in \eqref{eq: V1} and \eqref{eq: V3 0}--\eqref{eq: V3}, respectively,
and $\sigma$ is defined by \eqref{eq: supportsigma}--\eqref{eq: sigma}.
\end{theorem}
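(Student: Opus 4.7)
The plan is to exhibit a candidate triple $(\nu_1,\nu_2,\nu_3)$ arising from the asymptotic analysis of Section \ref{sec: zero distribution}, and then to prove that this triple satisfies the Euler--Lagrange variational conditions associated with $E$, so that uniqueness of the minimizer pins down the first component as the $\nu_1$ of Theorem \ref{th: zero distribution}. First I would establish existence and uniqueness of the minimizer abstractly. The quadratic interaction associated with $E$ is governed by the $3\times 3$ matrix
\[
A=\begin{pmatrix} 1 & -1/2 & 0 \\ -1/2 & 1 & -1/2 \\ 0 & -1/2 & 1 \end{pmatrix},
\]
whose leading principal minors are $1,\tfrac34,\tfrac12$, so that $A$ is positive definite and $E$ is strictly convex on the admissible class. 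Combined with the quadratic growth of $V_1$ at infinity (since $\omega_1(x)^4=O(|x|^{4/3})$ by \eqref{eq: omegas}), the boundedness and compact support of $V_3$, and the upper constraint $\rho_2\le\sigma$ with $\sigma$-density growing only like $|y|^{1/3}$, this places us in the standard framework for vector equilibrium problems with external field and upper constraint as in \cite{SaTo, DK2}, yielding a unique minimizer. The minimizer is characterized by constants $\ell_1,\ell_2,\ell_3$ and variational relations
\[
2U^{\nu_1}+V_1-U^{\nu_2}=\ell_1\text{ on }\supp(\nu_1),\qquad 2U^{\nu_3}+V_3-U^{\nu_2}=\ell_3\text{ on }\supp(\nu_3),
\]
with the appropriate inequalities off the supports and the usual two-sided condition for $\nu_2$ on the free and saturated parts of $\supp(\sigma)$.

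The candidate measures are read off from the branches of the cubic \eqref{eq: omegas}. For $t\ge 0$ the associated Riemann surface has branch cuts on a single symmetric interval of $\R$ and on $(-i\infty,-iy^*]\cup[iy^*,i\infty)$; for $t<0$ an additional cut appears on $[-x^*,x^*]$. These are precisely the supports of $\nu_1$, of $\sigma$, and of $\nu_3$, respectively, and the densities of the candidate measures arise as jumps of the appropriate $\omega_j$'s across these cuts, so that the branches play the role of (shifted) resolvents. The total masses $1,\tfrac23,\tfrac13$ and the constraint density \eqref{eq: sigma} follow from the Vieta identities $\omega_1+\omega_2+\omega_3=0$ and $\omega_1\omega_2+\omega_1\omega_3+\omega_2\omega_3=t$, combined with residue calculations at infinity where $\omega_j(z)\sim(\tau z)^{1/3}\zeta^j$ with $\zeta$ a primitive cube root of unity.

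It remains to verify the variational relations. Differentiating the candidate potentials and applying Sokhotski--Plemelj on each cut converts every variational equality into an algebraic identity among the $\omega_j$'s, and the definitions \eqref{eq: V1} and \eqref{eq: V3} of $V_1$ and $V_3$ are designed precisely so that these identities hold: $V_1$ is the value of $W(y)-\tau xy$ at its global minimum $y=\omega_1(x)$, while $V_3$ is the positive gap between its other two real critical values, which exist only when three real branches coexist, i.e.\ for $t<0$ and $|x|<x^*$. The main obstacle is the verification of the variational \emph{inequalities} off the supports and of the two-sided constraint condition for $\nu_2$, particularly in the regime $t<0$, where the sheet structure of the spectral curve degenerates at $\pm x^*$ and the support of $\nu_3$ opens up. A careful sheet-by-sheet analysis of the analytic continuations of the $\omega_j$'s across these critical points is needed; once the inequalities are in place, strict convexity from the first step identifies our candidate with the unique minimizer, completing the proof.
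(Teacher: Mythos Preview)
Your construction of the candidate measures has a genuine gap. You propose to read the $\nu_j$ off from the branch structure of the cubic \eqref{eq: omegas}, but this is the wrong algebraic curve. For $t\ge 0$ the discriminant $-4t^3-27\tau^2 z^2$ has no real zeros, so the cubic has \emph{no} branch cut on $\R$; there is nothing there from which a compactly supported probability measure $\nu_1$ (or a mass-$\tfrac13$ measure $\nu_3$) could arise as a jump. The endpoints $\pm\alpha$ of $\supp(\nu_1)$ depend on $V(x)=x^2/2$, which does not enter \eqref{eq: omegas} at all; that cubic encodes only $W$ and $\tau$. The branches $\omega_j$ are not the resolvents of the equilibrium measures --- they appear in the problem only through the \emph{data} $V_1$, $V_3$, $\sigma$, not through the minimizer. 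A second, related gap: even granting a correct candidate, nothing in your outline identifies its first component with the limiting zero distribution of $p_{n,n}$ from Theorem~\ref{th: zero distribution}, which is what the statement actually asserts.

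The paper's route is structurally different and supplies exactly the two missing ingredients. The candidate is $\nu_j=\int_0^1\mu_j^\xi\,\mathrm d\xi$, where for each $\xi$ the triple $(\mu_1^\xi,\mu_2^\xi,\mu_3^\xi)$ solves a Toeplitz-type equilibrium problem attached to a \emph{quartic} symbol built from the limiting recurrence coefficients (Sections~\ref{sec: 1cc}--\ref{sec: 2cc}). By Theorem~\ref{th: KR} this $\nu_1$ \emph{is} the zero distribution, so the link to Theorem~\ref{th: zero distribution} is built in. One then integrates the $\xi$-dependent Euler--Lagrange identities over $\xi\in(0,\infty)$ (Theorem~\ref{th: integrating}); the monotonicity of the supports $\Gamma_j(\xi)$ in $\xi$ converts these into the variational equalities and inequalities for the averaged problem, with external fields and constraint given by the integrals \eqref{eq: V1 def}--\eqref{eq: V3 def}. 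The cubic \eqref{eq: omegas} enters only at the very end: explicit antiderivatives $F_j,G_j$ (Lemmas~\ref{lemma: integration lemma}--\ref{lemma: xi to zero}) evaluate those $\xi$-integrals in the limit $\xi\to 0$ and identify them with $V_1$, $V_3$, and $\sigma$. Your plan tries to bypass the $\xi$-averaging, but that averaging is precisely where both the potential $V$ and the connection to the zeros of $p_{n,n}$ live.
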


Theorem \ref{th: equilibrium problem} is proved in Section
\ref{sec: integrating}.

\subsection{Phase diagram and critical behavior}
The proof of Theorem \ref{th: equilibrium problem} is
constructive, and we find fairly explicit formulas for the
minimizing measures $\nu_j$. Indeed, we obtain $\nu_j$ as an average
\[ \nu_j = \int_0^1 \mu_j^{\xi} d\xi \]
of measures $\mu_j^{\xi}$ depending on a parameter $\xi$ and these measures
are given by formulas \eqref{eq: muj 1cc} and \eqref{eq: muj 2cc bis}.

It follows from the analysis leading to these formulas that the supports of
the measures $\nu_1$, $\sigma-\nu_2$, and $\nu_3$ have the following form
\begin{align*}
    \supp (\nu_1) & =[-\alpha,-\beta] \cup [\beta, \alpha], \\
    \supp(\sigma-\nu_2) & =i\R \setminus (-i\gamma,i\gamma), \\
    \supp(\nu_3) &= \R \setminus (-\delta,\delta),
\end{align*}
for some $\alpha > \beta \geq 0$, $\gamma, \delta \geq 0$ depending on $t \in \mathbb R$ and $\tau > 0$.

We may distinguish a number of cases, depending on whether $\beta$, $\gamma$, or $\delta$ are equal to
zero, or not. At least one of these is zero, and generically, no two consecutive ones are zero.
Our analysis leads to the phase diagram in the $t \tau$-plane shown in
Figure \ref{fig: tau t phase diagram}.
\begin{description}
\item[Case I:] $\beta=0$, $\gamma>0$, and $\delta=0$.
Thus in this case there are no gaps in the supports of the measures $\nu_1$ and $\nu_3$ on
the real line. The constraint $\sigma$ is active along an interval $[-i\gamma, i\gamma]$
on the imaginary axis.
\item[Case II:] $\beta=0$, $\gamma>0$, and $\delta>0$.
In this case the measure $\nu_1$ is still supported on one interval. However there
is a gap $(-\delta, \delta)$ in the support of $\nu_3$. As in Case I, the constraint
$\sigma$ is  active along an interval $[-i\gamma, i\gamma]$ on the imaginary axis.
\item[Case III:] $\beta>0$, $\gamma = 0$, and $ \delta>0$.
In Case III there is a gap in the supports of $\nu_1$ and $\nu_3$, but the constraint
on the imaginary axis is not active.
\item[Case IV:] $\beta>0$, $\gamma > 0$, and $\delta=0$.
In Case IV there is a gap in the support of $\nu_1$,  but there is no gap in the support of $\nu_3$,
which is now the full real line. The constraint is active
along an interval along the imaginary axis.
\end{description}

Cases II and III are new in the sense that they do
not appear in \cite{DK2}. The opening of a gap in the support of $\nu_3$
is due to the external field $V_3$
that acts on the third measure in the vector equilibrium problem.
As $V_3$ is identically zero for $t \geq 0$, the Cases II and III do
not appear if $t \geq 0$, as can be seen in Figure~\ref{fig: tau t phase diagram}.

Critical behavior occurs at the curves that separate the different cases
from each other. These critical curves are given by the equations
\begin{align*}
    \tau = \sqrt{t + 2}, \qquad -2 \leq t < \infty, \qquad \text{and} \qquad
    \tau = \sqrt{- \frac{1}{t}}, \qquad -\infty < t < 0. \end{align*}
On the critical curves two of the numbers $\beta, \gamma$ and $\delta$
are equal to zero. For example, on the curve between  Case II and Case III, we
have $\beta=\gamma=0$, while $\delta>0$.
Finally, note the multi-critical point
\[ t=-1, \qquad \tau=1 \]
in the phase diagram, where $\beta=\gamma=\delta = 0$.
All four cases come together at this point in the $t \tau$-plane.

We do not discuss the critical and multi-critical behavior any further in this paper.
However, it would be particularly interesting to analyze the nature of the
multi-critical point.

\begin{figure}[t]
\begin{center}
\begin{tikzpicture}[scale=1.5]
\draw[->](0,0)--(0,4.25) node[above]{$\tau$};
\draw[->](-4,0)--(4.25,0) node[right]{$t$};
\draw[help lines] (-1,0)--(-1,1)--(0,1);
\draw[very thick,rotate around={-90:(-2,0)}] (-2,0) parabola (-4.5,6.25) node[above]{$\tau=\sqrt{t+2}$};
\draw[very thick] (-1,1)..controls (0,1.5) and (-0.2,3).. (-0.1,4)
             (-1,1)..controls (-2,0.5) and (-3,0.2).. (-4,0.1) node[above]{$\tau=\sqrt{-\frac{1}{t}}$};
\filldraw  (-1,1) circle (1pt);
\draw (0.1,1) node[font=\footnotesize,right]{$1$}--(-0.1,1);
\draw (-1,0.1)--(-1,-0.1) node[font=\footnotesize,below]{$-1$};
\draw (-2,0.1)--(-2,-0.1) node[font=\footnotesize,below]{$-2$};
\draw (0.1,1.43) node [font=\footnotesize,right]{$\sqrt 2$}--(-0.1,1.43);
\draw[very thick] (2,0.8) node[fill=white]{Case I}
                  (-2.5,0.2) node{Case II}
                  (-2,2) node[fill=white]{Case III}
                  (1,3) node[fill=white]{Case IV};
\end{tikzpicture}
\end{center}
\caption{The phase diagram in the $t\tau$-plane: the critical curves $\tau=\sqrt{t+2}$
and $\tau=\sqrt{-\frac{1}{t}}$ separate the four cases.
The cases are distinghuished by the fact whether $0$ is in the support of the measures
$\nu_1$, $\sigma-\nu_2$, and $\nu_3$, or not.}
\label{fig: tau t phase diagram}
\end{figure}
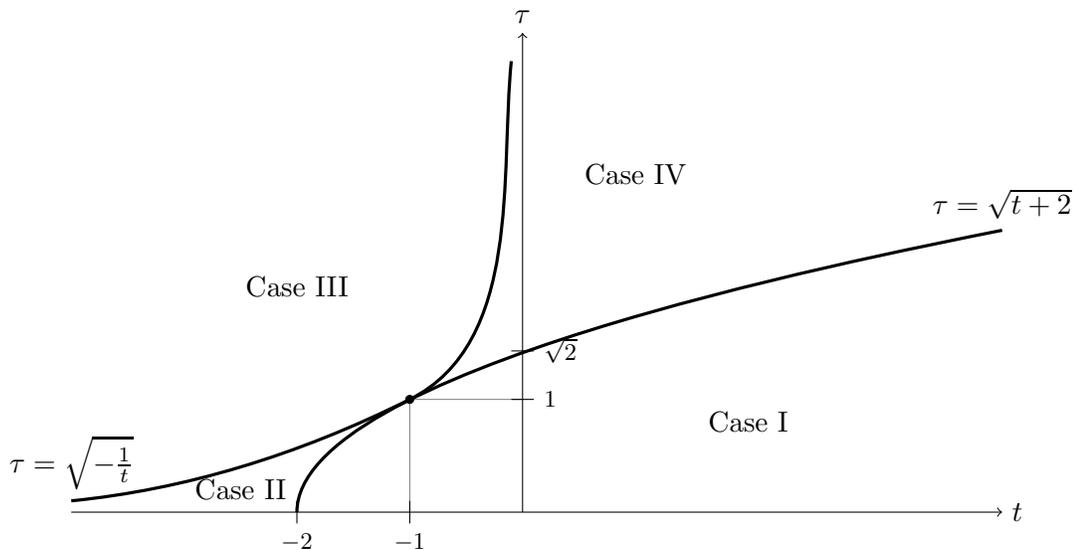

\subsection{Overview of the rest of the paper}
The rest of the paper is devoted to the proofs of the three main results.
Theorem~\ref{th: interlacing} is proved in Section \ref{sec: interlacing}.
The other sections deal with the specific model \eqref{eq:quartic/quadratic potentials}
and lead to the proofs of Theorems \ref{th: zero distribution} 
and \ref{th: equilibrium problem} in Sections \ref{sec: zero distribution} and
\ref{sec: integrating}.
The intermediate sections contain auxiliary results that will be essential to these proofs.

In Section \ref{sec: recurrence} it is shown that the biorthogonal polynomials satisfy
recurrence relations with recurrence coefficients that have certain asymptotic behaviors.
We introduce a new parameter $\xi$ and consider the asymptotic behavior in the $t\xi$-phase space.
Two different types of asymptotic behavior will lead to a separation of the phase space into two
regions $C_1$ and $C_2$, where $C_1$ is the one-cut case region, and $C_2$ the two-cut case region.

With every $\xi$-value, we associate a vector equilibrium problem for three measures.
These equilibrium problems serve as building blocks for the vector equilibrium problem
of Theorem~\ref{th: equilibrium problem}.
We introduce and analyze the one-cut case in Section \ref{sec: 1cc} and
the two-cut case in Section \ref{sec: 2cc}.
Due to the different asymptotic behavior of recurrence coefficients, the analysis
in both sections is significantly different.

\section{Proof of Theorem \ref{th: interlacing}} \label{sec: interlacing}

In this section we prove Theorem \ref{th: interlacing}. The proof
of this theorem is inspired by \cite{AKLR} and uses the following
theorem from \cite{EMc}.

\begin{theorem} \label{th: ercMcL}
Suppose that $\tau \neq 0$. Let $V$ and $W$ be functions for which the integrals
\[
\int_{-\infty}^{\infty}\int_{-\infty}^{\infty}
x^ky^je^{-n(V(x)+W(y)-\tau x y)} \ud x \ud y,\quad k=0,1,2,\ldots,
\quad j=0,1,2,\ldots,
\]
converge. Then, for each $k$, there is a unique monic
polynomial $p_{k,n}$ and a unique monic polynomial $q_{k,n}$ such
that the families of polynomials $(p_{k,n})_{k=0}^\infty$ and
$(q_{k,n})_{k=0}^\infty$ satisfy (\ref{eq: biorthogonality
general}). In addition, the zeros of these polynomials are real and simple.
\end{theorem}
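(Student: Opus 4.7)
The plan is to establish the two claims of Theorem \ref{th: ercMcL} separately: first existence and uniqueness of monic $p_{k,n}$ (the argument for $q_{k,n}$ is symmetric under $V \leftrightarrow W$, $x \leftrightarrow y$), and then reality and simplicity of its zeros. Both parts hinge on positivity coming from the strict total positivity of the exponential kernel $e^{n\tau xy}$ for $\tau \neq 0$.

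\textbf{Step 1 (Existence and uniqueness).} Writing $p_{k,n}(x) = x^k - \sum_{j=0}^{k-1} \alpha_j x^j$ and imposing the biorthogonality conditions $\int\!\int p_{k,n}(x) y^j K(x,y)\ud x\ud y = 0$ for $j=0,\ldots,k-1$, where $K(x,y) := e^{-n(V(x)+W(y)-\tau xy)}$, produces a linear system for $(\alpha_j)$ whose matrix is the moment matrix $M = (\mu_{ij})_{i,j=0}^{k-1}$, $\mu_{ij} := \int\!\int x^i y^j K(x,y)\ud x\ud y$. Existence and uniqueness are therefore equivalent to $D_k := \det M \neq 0$. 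I would expand $D_k$ by multilinearity and apply an Andr\'eief-type symmetrization to obtain
\[
D_k = \frac{1}{(k!)^2} \int \cdots \int \Delta(x)\,\Delta(y)\,\det\bigl(e^{n\tau x_i y_j}\bigr)_{i,j=1}^k \prod_{i=1}^k e^{-nV(x_i)-nW(y_i)}\ud x_i\ud y_i,
\]
where $\Delta$ denotes the Vandermonde. The integrand is invariant under independent permutations of the $x_i$'s and of the $y_i$'s (each of the three antisymmetric factors picks up the same sign), so after restricting to $x_1<\cdots<x_k$, $y_1<\cdots<y_k$ every factor is strictly positive: the Vandermondes by the orderings, and $\det(e^{n\tau x_i y_j})>0$ by strict total positivity of the exponential kernel. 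Thus $D_k>0$, giving existence and uniqueness.

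\textbf{Step 2 (Real simple zeros via a Chebyshev system).} For any polynomial $s(y)$, define $\phi_s(x) := \int s(y) K(x,y)\ud y$. Biorthogonality of $p_{k,n}$ to $\{y^j\}_{j<k}$ is equivalent to
\[
\int_{-\infty}^{\infty} p_{k,n}(x)\,\phi_s(x)\ud x = 0 \qquad \text{for every polynomial } s \text{ with } \deg s < k.
\]
The essential point is that $\{\phi_{y^j}\}_{j=0}^{k-1}$ is a Chebyshev (T-)system on $\R$. To verify this, I would show
\[
\det\bigl(\phi_{y^{i-1}}(x_j)\bigr)_{i,j=1}^k = \frac{1}{k!}\prod_{j=1}^k e^{-nV(x_j)} \int \cdots \int \Delta(y)\,\det\bigl(e^{n\tau x_j y_i}\bigr)\prod_i e^{-nW(y_i)}\ud y_i > 0
\]
for every $x_1<\cdots<x_k$, by the same Andr\'eief-plus-total-positivity reasoning as in Step~1. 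Any nontrivial linear combination in $\mathrm{span}\{\phi_{y^j}\}_{j<k}$ then has at most $k-1$ real zeros.

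\textbf{Step 3 (Sign-change argument).} Suppose $p_{k,n}$ has only $m<k$ real sign changes, at $z_1<\cdots<z_m$. The $m$ vanishing conditions $\phi_s(z_1)=\cdots=\phi_s(z_m)=0$ impose fewer than $k$ linear constraints on the $k$-dimensional span, so a nonzero $s$ of degree $<k$ exists for which $\phi_s$ vanishes at all $z_i$. Using the T-system property, any remaining degrees of freedom can be used to place the extra zeros of $\phi_s$ (at most $k-1-m$ of them) outside any prescribed bounded set, or paired up so that $\phi_s$ changes sign exactly at the $z_i$'s; choosing the overall sign, $p_{k,n}(x)\phi_s(x)\geq 0$ on $\R$ and is not identically zero, so $\int p_{k,n}\phi_s\ud x>0$. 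This contradicts biorthogonality. Therefore $p_{k,n}$ has at least $k$ real sign changes, and since $\deg p_{k,n}=k$, exactly $k$ real simple zeros. Applying the same argument with the roles of the variables and potentials swapped handles $q_{k,n}$. The hardest part is the last, sign-pattern-matching step: the classical Chebyshev/Markov system theory provides the tools, but one has to argue carefully that the free parameters in $\phi_s$ can be chosen so that $\phi_s$ has no \emph{extra} sign changes on $\R$, which in turn relies on the full strength of strict total positivity of $e^{n\tau xy}$.
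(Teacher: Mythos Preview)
The paper does not prove this theorem at all: its entire proof is the single sentence ``This is \cite[Theorem 1]{EMc}.'' So there is nothing to compare your argument against in the paper proper. What you have written is essentially the Ercolani--McLaughlin argument itself, and it is sound in outline, with two small points worth tightening.

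First, in Step~1 you invoke strict total positivity of $e^{n\tau xy}$ and conclude all factors are strictly positive on the ordered region. This is literally true only for $\tau>0$; for $\tau<0$ the determinant $\det(e^{n\tau x_i y_j})$ has the fixed sign $(-1)^{k(k-1)/2}$ on $x_1<\cdots<x_k$, $y_1<\cdots<y_k$. Either note this (the conclusion $D_k\neq 0$ survives), or reduce to $\tau>0$ by the substitution $y\mapsto -y$, replacing $W(y)$ by $W(-y)$.

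Second, your Step~3 is more delicate than it needs to be. Rather than working in the full $k$-dimensional span and worrying about where the extra $k-1-m$ zeros of $\phi_s$ land, use that $\{\phi_{y^0},\ldots,\phi_{y^m}\}$ is already a T-system (your Step~2 computation applies for every size). In this $(m+1)$-dimensional space there is, up to scalar, a unique combination vanishing at $z_1,\ldots,z_m$, and by the T-system property it has \emph{no other} real zeros and each $z_i$ is simple. This removes the hand-waving about ``placing extra zeros outside a bounded set'' that you flagged as the hardest part.

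Finally, it is worth noting that the paper does offer, in the Remark following the proof of Theorem~\ref{th: interlacing}, a different and rather elegant argument for the real-simple-zeros statement (assuming uniqueness): if $p_{k,n}$ had a complex conjugate pair or a real double root at $x_0$, write $p_{k,n}(x)=((x-a)^2+b^2)\,r(x)$, absorb the quadratic factor into the potential as $V(x)\mapsto V(x)-\tfrac1n\log((x-a)^2+b^2)$, and observe that $r$ of degree $k-2$ is then biorthogonal to $1,y,\ldots,y^{k-1}$ for the modified kernel, forcing $r\equiv 0$ by the uniqueness part. Your Chebyshev-system route is the classical one; the paper's modified-potential trick trades the total-positivity machinery for a clever reduction to uniqueness.
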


\begin{proof}
This is \cite[Theorem 1]{EMc}.
\end{proof}

To establish Theorem \ref{th: interlacing} (a) it is clearly
enough to prove the statements about the zeros of $p_{k,n}$, since
the results about the zeros of $q_{k,n}$ follow by symmetry.

\begin{proof}[Proof of Theorem \ref{th: interlacing} (a).]
Fix an integer $k \geq 1$ and consider the linear combination
$Ap_{k,n}+Bp_{k+1,n}$ with $(A,B) \neq (0,0)$. We claim that this
polynomial has no real multiple zeros.

To see this, assume that $x_0$ is a real zero of multiplicity at least two.
Then we can write
\[
    Ap_{k,n}(x)+Bp_{k+1,n}(x)=(x-x_0)^2 r(x),
\]
where $r$ is polynomial of degree $\leq k-1$. From the biorthogonality
\eqref{eq: biorthogonality general} it follows that
\begin{multline*}
\int_{-\infty}^{\infty} \int_{-\infty}^{\infty}
r(x)y^l(x-x_0)^2e^{-n(V(x)+W(y)-\tau x y)} \ud x \ud y \\ =
\int_{-\infty}^{\infty} \int_{-\infty}^{\infty}
r(x)y^le^{-n(V(x)-\frac{2}{n}\log |x-x_0|+W(y)-\tau x y)} \ud x \ud
y=0,
\end{multline*}
for $l=0,1,2,\ldots,k-1$.
Applying the uniqueness part of Theorem \ref{th: ercMcL} to  the modified
potential $V(x)-\frac2n \log|x-x_0|$, we obtain $r \equiv 0$.
Thus, $A=B=0$, which yields a contradiction and, therefore, proves the claim
that $A p_{k,n} + B p_{k+1,n}$ has no real multiple zeros if $(A,B) \neq (0,0)$.
It follows that the linear system of equations
\[
\begin{pmatrix}
p_{k,n}(x) & p_{k+1,n}(x) \\
p'_{k,n}(x) & p'_{k+1,n}(x)
\end{pmatrix}
\begin{pmatrix} A \\ B \end{pmatrix} =
\begin{pmatrix} 0 \\ 0 \end{pmatrix},
\]
has only the trivial solution $A= B = 0$, for every $x \in \mathbb R$. Therefore
the matrix has a non-zero determinant, and thus
\[
    p_{k,n}(x)p'_{k+1,n}(x)-p'_{k,n}(x)p_{k+1,n}(x) \neq 0, \qquad x \in \mathbb R.
\]
By continuity and the behavior as $x \to \infty$, we conclude
from this that
\begin{equation} \label{eq: >0}
    p_{k,n}(x)p'_{k+1,n}(x)-p'_{k,n}(x)p_{k+1,n}(x) > 0, \qquad x \in \mathbb R.
\end{equation}

Now consider two consecutive zeros $x_l$ and $x_{l+1}$ of $p_{k+1,n}$.
Because these zeros are simple, we have that
\[
    p'_{k+1,n}(x_l)p'_{k+1,n}(x_{l+1})<0.
\]
From \eqref{eq: >0} we find  $p_{k,n}(x_l)p'_{k+1,n}(x_l) > 0$ and
$p_{k,n}(x_{l+1})p'_{k+1,n}(x_{l+1}) > 0$. Hence, we obtain
\[
    p_{k,n}(x_l) p_{k,n}(x_{l+1}) < 0.
\]
Therefore, $p_{k,n}$ must have a zero between $x_l$ and $x_{l+1}$.
Hence, in between any two consecutive zeros of $p_{k+1,n}$, there is
a zero of $p_{k,n}$, which implies that the zeros of $p_{k,n}$ and $p_{k+1,n}$ interlace.
\end{proof}

\begin{proof}[Proof of Theorem \ref{th: interlacing} (b).]
The proof of (b) follows the same strategy. Let $k \geq 1$
and assume that the linear combination $Ap_{k,n} + Bp_{k+2,n}$ has a positive multiple root, say $x_0 > 0$.
Because $V$ and $W$ are even potentials, $p_{k,n}$ and $p_{k+2,n}$ are either both even or both odd.
Therefore, also $-x_0$ is a double zero and we can write
\[
    Ap_{k,n}(x) + Bp_{k+2,n}(x)= (x^2-x_0^2)^2 r(x),
\]
where $r$ is polynomial of degree $\leq k-2$. Then, $r \equiv 0$ and, thus, $A = B=0$ as in the proof of part (a).

Hence $Ap_{k,n} +  Bp_{k+2,n}$ with $(A,B) \neq (0,0)$ has no positive multiple zeros.
Therefore, the linear system of equations
\[
\begin{pmatrix}
p_{k,n}(x) & p_{k+2,n}(x) \\
p'_{k,n}(x) & p'_{k+2,n}(x)
\end{pmatrix}
\begin{pmatrix} A \\ B \end{pmatrix} =
\begin{pmatrix} 0 \\ 0 \end{pmatrix},
\]
has only the trivial solution for every $x >0$. Thus, as in the proof
of part (a),
\begin{equation} \label{eq: pkandpk+2}
p_{k,n}(x) p'_{k+2,n}(x)-p'_{k,n}(x) p_{k+2,n}(x) > 0, \qquad x >0.
\end{equation}
The proof of interlacing of the positive zeros follows from
\eqref{eq: pkandpk+2} in the same way as before.
\end{proof}

\paragraph{Remark.}
It was shown in \cite{EMc} that the zeros of biorthogonal
polynomials are real and simple. We can prove this result in an alternative way as follows.

First assume that $p_{k,n}$ has a non-real zero $x_0 = a+bi$, $a,b \in \R$, $b \neq 0$.
Then also $\overline{x}_0 = a-bi$ is a zero of $p_{k,n}$. Thus, $p_{k,n}$ can be written as
\[
    p_{k,n}(x)=\left( (x-a)^2+b^2 \right) r(x),
\]
where $r$ is a polynomial of degree $k-2$. Now observe that
\begin{multline*}
\int_{-\infty}^{\infty} \int_{-\infty}^{\infty}
y^l\left( (x-a)^2+b^2 \right) r(x)e^{-n(V(x)+W(y)-\tau x y)} \ud x \ud y \\ =
\int_{-\infty}^{\infty} \int_{-\infty}^{\infty}
r(x)y^le^{-n \left( V(x)-\frac{1}{n}\log \left( (x-a)^2+b^2 \right)+W(y)-\tau x y\right)} \ud x \ud
y=0,
\end{multline*}
for $l=0,1,2,\ldots,k-1$. Note that the second equality follows
from \eqref{eq: biorthogonality general}.
Applying the uniqueness part of Theorem \ref{th: ercMcL} to $W(y)$ and
the modified potential $V(x)-\frac{1}{n}\log \left( (x-a)^2+b^2 \right)$
we obtain $r \equiv 0$ which is a contradiction. Therefore the zeros of $p_{k,n}$ are real.
Moreover, by putting $b=0$ in the above argument we also obtain that the zeros are simple.

Of course, a similar reasoning shows that the zeros of $q_{k,n}$ are real and simple.

\section{Preliminaries on recurrence coefficients} \label{sec: recurrence}

\subsection{Relations between recurrence coefficients}

In the rest of the paper we consider the model with quartic
and quadratic potentials \eqref{eq:quartic/quadratic potentials}.
In that case both
sequences $(p_{k,n})_k$ and $(q_{k,n})_k$ of biorthogonal
polynomials defined by \eqref{eq: biorthogonality specific}
satisfy a recurrence relation with recurrence coefficients
that are related to each other as
described in the following lemma.

\begin{lemma}\label{lemma:relation coefficients}
For $n \geq 1$, the sequence of polynomials $(q_{k,n})_{k=0}^\infty$
is orthogonal with respect to the weight
\begin{equation}
    w(y)=e^{ -n(y^4/4+ty^2/2-\tau^2y^2/2)}, \qquad y \in \mathbb R,
\label{eq: weight}
\end{equation}
and, therefore, satisfies a recurrence relation of the form
\[
    yq_{k,n}(y)=q_{k+1,n}(y) + a_{k,n}q_{k-1,n}(y), \quad k=0,1,2,\ldots ,
\]
where $q_{-1,n} \equiv 0,$ $a_{0,n}=0$, and $a_{k,n}>0$, $k=1,2,\ldots$.

In addition, the sequence of polynomials $(p_{k,n})_{k=0}^\infty$ satisfies a
recurrence relation of the form
\begin{equation} \label{eq: 5trr}
    xp_{k,n}(x)=p_{k+1,n}(x)+b_{k,n}p_{k-1,n}(x)+c_{k,n}p_{k-3,n}(x), \quad k=0,1,2,\ldots ,
\end{equation}
where $p_{-1,n} \equiv p_{-2,n} \equiv p_{-3,n} \equiv 0$.
The recurrence coefficients are related as follows
\begin{align}
    b_{k,n} &=a_{k,n}(a_{k-1,n}+a_{k,n}+a_{k+1,n}+t)=\tau^2a_{k,n}+\frac{k}{n}, &&\ k \geq 1,\label{eq: ba}\\
    c_{k,n} &=\tau^2a_{k-2,n}a_{k-1,n}a_{k,n}, &&\ k \geq 3. \label{eq: ca}
\end{align}
\end{lemma}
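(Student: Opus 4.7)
The plan is to establish the four assertions in sequence: orthogonality of $(q_{k,n})_k$, the three-term recurrence for $(q_{k,n})_k$, the five-term recurrence for $(p_{k,n})_k$, and finally the relations \eqref{eq: ba}--\eqref{eq: ca}.

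\textbf{Step 1: Orthogonality of $q_{k,n}$.} Since $V(x)=x^2/2$ I would complete the square $\tfrac12 x^2-\tau xy=\tfrac12(x-\tau y)^2-\tfrac12\tau^2y^2$ and substitute $u=x-\tau y$ in the biorthogonality \eqref{eq: biorthogonality specific}. This converts it into
\[
\int_{-\infty}^{\infty} P_k(y)\,q_{j,n}(y)\,w(y)\,\mathrm{d}y=0,\qquad j\neq k,
\]
where $P_k(y):=\int p_{k,n}(u+\tau y)e^{-nu^2/2}\mathrm{d}u$ is a polynomial in $y$ of exact degree $k$ with leading coefficient $\tau^k\sqrt{2\pi/n}$, and $w$ is the weight \eqref{eq: weight}. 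For each fixed $k$, letting $j$ range over $0,\ldots,k-1$ shows $q_{k,n}\perp y^l$ with respect to $w$ for $l<k$. Hence $(q_{k,n})_k$ is the sequence of monic orthogonal polynomials for $w$, and since $w$ is even the standard three-term recurrence has no diagonal term, giving $yq_{k,n}=q_{k+1,n}+a_{k,n}q_{k-1,n}$ with $a_{k,n}=\|q_{k,n}\|_w^2/\|q_{k-1,n}\|_w^2>0$.

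\textbf{Step 2: Structure of the $p$-recurrence.} Write $xp_{k,n}=p_{k+1,n}+\sum_{l=0}^{k}\gamma_{l}p_{l,n}$ and extract $\gamma_l$ by pairing against $q_{l,n}$: with $h_{k,n}:=\iint p_{k,n}q_{k,n}e^{-n(\cdot)}\mathrm{d}x\,\mathrm{d}y$ I obtain $\gamma_l h_{l,n}=\iint xp_{k,n}q_{l,n}e^{-n(\cdot)}\mathrm{d}x\,\mathrm{d}y$. Using $\tau xe^{-n(\cdot)}=W'(y)e^{-n(\cdot)}+\tfrac1n\partial_y e^{-n(\cdot)}$ and integrating by parts in $y$, this becomes
\[
\tau\gamma_l h_{l,n}=\iint p_{k,n}(x)\bigl[W'(y)q_{l,n}(y)-\tfrac1n q_{l,n}'(y)\bigr]e^{-n(\cdot)}\mathrm{d}x\,\mathrm{d}y.
\]
Since $W'(y)q_{l,n}(y)-\tfrac1nq_{l,n}'(y)$ has degree $l+3$, by biorthogonality the right-hand side vanishes once $l+3<k$, so $\gamma_l=0$ for $l<k-3$. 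The remaining coefficients $\gamma_k$ and $\gamma_{k-2}$ are killed by a parity argument: the $(x,y)\mapsto(-x,-y)$ symmetry of the measure and uniqueness of biorthogonal polynomials force $p_{k,n}(-x)=(-1)^kp_{k,n}(x)$, so only $\gamma_{k-1}$ and $\gamma_{k-3}$ can survive. This proves \eqref{eq: 5trr} with $b_{k,n}=\gamma_{k-1}$, $c_{k,n}=\gamma_{k-3}$.

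\textbf{Step 3: The relations \eqref{eq: ba}--\eqref{eq: ca}.} First I note $h_{k,n}/h_{k-1,n}=\tau a_{k,n}$: integrating out $x$ as in Step 1 yields $h_{k,n}=\tau^k\sqrt{2\pi/n}\,\|q_{k,n}\|_w^2$, and the ratio gives $\tau a_{k,n}$. Applying the computation of Step 2 with $l=k-1$ and using the three-term recurrence repeatedly to expand $W'(y)q_{k-1,n}(y)$ in the basis $\{q_{i,n}\}$, the coefficient of $q_{k,n}$ turns out to be $a_{k+1,n}+a_{k,n}+a_{k-1,n}+t$ (the $q_{k-1,n}'$ term contributes nothing since it has degree $k-2$). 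This yields $b_{k,n}=a_{k,n}(a_{k-1,n}+a_{k,n}+a_{k+1,n}+t)$, the first equality in \eqref{eq: ba}. For the second equality I use the dual identity $(x-\tau y)e^{-n(\cdot)}=-\tfrac1n\partial_x e^{-n(\cdot)}$ and integrate by parts in $x$; pairing with $q_{k-1,n}$ gives
\[
b_{k,n}h_{k-1,n}-\tau h_{k,n}=\tfrac{1}{n}\iint p_{k,n}'(x)q_{k-1,n}(y)e^{-n(\cdot)}\mathrm{d}x\,\mathrm{d}y=\tfrac{k}{n}h_{k-1,n},
\]
since $p_{k,n}'=kp_{k-1,n}+\mathrm{lower\ order}$. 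Dividing by $h_{k-1,n}$ yields $b_{k,n}=\tau^2 a_{k,n}+k/n$. For $c_{k,n}$ the same recipe with $l=k-3$ gives $\tau c_{k,n}h_{k-3,n}=h_{k,n}$, because expanding $y^3 q_{k-3,n}$ in the $q_{i,n}$-basis produces coefficient $1$ on $q_{k,n}$ and the lower-order contributions from $tyq_{k-3,n}$ and $q_{k-3,n}'$ are of degree $\leq k-2$. Telescoping $h_{k,n}/h_{k-3,n}=\tau^3 a_{k,n}a_{k-1,n}a_{k-2,n}$ then gives \eqref{eq: ca}.

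\textbf{Main obstacle.} The only delicate point is keeping the bookkeeping of Step 2 clean: one must simultaneously use the biorthogonality-based vanishing (which produces the \emph{upper} cutoff $l\geq k-3$) and the evenness/parity argument (which eliminates the \emph{same-parity} coefficients $\gamma_k,\gamma_{k-2}$). Everything else is symbolic computation using the orthogonal polynomial recurrence, once the identity $h_{k,n}/h_{k-1,n}=\tau a_{k,n}$ is in hand.
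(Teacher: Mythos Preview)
Your proof is correct and complete. The paper itself does not supply a detailed argument: it merely cites \cite{BEH1} for the general result on recurrence relations of biorthogonal polynomials and \cite{Woe} for the explicit computations in the case $t=0$, remarking that the extension to general $t$ is straightforward. What you have written is precisely that straightforward extension carried out in full, so your approach coincides with the one underlying the cited references.
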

\begin{proof}
The lemma is known in much greater generality for general polynomial potentials 
$V$ and $W$, see \cite{BEH1}.

Explicit calculations that lead to \eqref{eq: 5trr}, \eqref{eq: ba} and \eqref{eq: ca}
for the case $t=0$ are given in \cite{Woe}. These calculations extend to
general  $t \in \mathbb R$ in a straightforward way.
\end{proof}

\subsection{Asymptotic behavior of recurrence coefficients}

Our next result concerns the asymptotic behavior of the recurrence
coefficients $a_{k,n}$, $b_{k,n}$ and $c_{k,n}$ as $k, n \to
\infty$ such that $k/n \to \xi > 0$. Let us introduce some
notation. We write
\[
\lim_{k/n \to \xi} X_{k,n}=X,
\]
if $\lim\limits_{j \to \infty} X_{k_j,n_j}=X$ holds
for every two sequences of positive integers $(k_j)$ and $(n_j)$ that satisfy $k_j,n_j \to \infty$
and $k_j/n_j \to \xi$ as $j \to \infty$. In the same spirit we write
\[
\lim_{\substack{k/n \to \xi \\ k \textrm{ even}}}X_{k,n}=X
\]
if $\lim\limits_{j \to \infty}X_{k_j,n_j}=X$ holds
for every sequence of positive even integers $(k_j)$ and every sequence of
positive integers $(n_j)$ that satisfy $k_j, n_j \to \infty$ and
$k_j/n_j \to \xi$ if $j \to \infty$. The limit with the subscript `odd'
is defined similarly.

The limiting behavior of the recurrence coefficients
$a_{k,n}$, $b_{k,n}$, $c_{k,n}$ as $k, n \to \infty$, $k/n \to \xi$ depends on
the values of $t \in \mathbb R$ and $\xi > 0$. We consider the coupling constant $\tau > 0$
to be fixed. We define the critical $\xi$-values
\begin{align*} \label{eq: xicr}
\xicr = \begin{cases}
    \frac{1}{4} (\tau^2-t)^2, & \text{ if } t < \tau^2, \\
    0, & \text{ if } t \geq \tau^2.
    \end{cases}
    \end{align*}
In the $t\xi$-plane the equation $\xi = \xi_{cr}$, $t < \tau^2$, defines
a semi-parabola that separates the upper half of the $t\xi$-plane
into two regions
\begin{equation*} \label{eq: C1C2}
    \begin{aligned}
    C_1 : &  \quad \xi > \xicr, \\
    C_2 : &  \quad 0 < \xi < \xicr, \quad - \infty < t < \tau^2,
\end{aligned} \end{equation*}
see Figure \ref{fig: phase diagram}.
We refer to $C_1$ as the one-cut case region, since the zeros of the
orthogonal polynomials $q_{k,n}$ accumulate
on one interval as $k,n \to \infty$ and $k/n \to \xi > \xicr$. If $t<\tau^2$ and $\xi \in (0,\xicr)$ the zeros of $q_{k,n}$ accumulate on two
disjoint intervals and therefore we call $C_2$ the two-cut case.

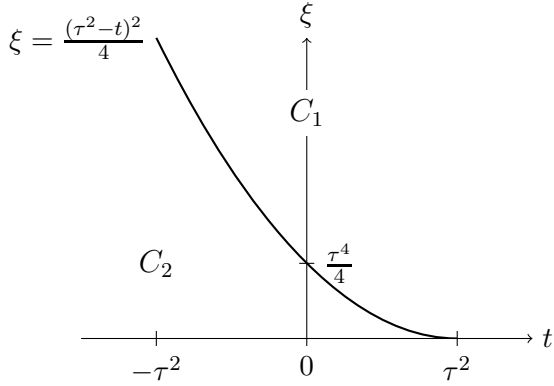
\begin{figure}[t]
\centering
\begin{tikzpicture}
\clip (-4.5,-1) rectangle (4.5,4.5);
\draw[->] (-3,0)--(3,0) node[right]{$t$};
\draw[->] (0,-0.1)node[below]{0}--(0,4) node[above]{$\xi$};
\draw (2,0.1)--(2,-0.1) node[below]{$\tau^2$};
\draw (-2,0.1)--(-2,-0.1) node[below]{$-\tau^2$};
\draw (-0.1,1)--(0.1,1) node[right] {$\frac{\tau^4}{4}$};
\draw (0,3) node[fill=white] {\large{$C_1$}};
\draw (-2,1) node[fill=white] {\large{$C_2$}};
\clip (-4,0) rectangle (4,8);
\draw[thick] (2,0) parabola  (-2,4) node[below,left]{$\xi=\frac{(\tau^2-t)^2}{4}$};
\end{tikzpicture}
\caption{$t\xi$-phase diagram: the semi-parabola separates the
one-cut case region $C_1$ from the two-cut case region $C_2$.}
\label{fig: phase diagram}
\end{figure}

We now state the following theorem.

\begin{theorem} \label{th:asympt recurrence coefficients}

\begin{enumerate}
\item[\rm (a)] If $\xi > \xicr$  then
the limits of $a_{k,n}$, $b_{k,n}$, $c_{k,n}$ as $k/n \to \xi$ exist
and we have
\begin{align}
 \lim_{k/n \to \xi} a_{k,n} & = a(\xi) :=
    \frac{\tau^2-t+\sqrt{(\tau^2-t)^2+12\xi}}{6}, \label{eq: a}\\
 \lim_{k/n \to \xi} b_{k,n} & = b(\xi) :=
    a(\xi)(3a(\xi)+t) = \tau^2 a(\xi)+\xi, \label{eq: b} \\
 \lim_{k/n \to \xi} c_{k,n} & = c(\xi) :=
    \tau^2 a^3(\xi). \label{eq: c}
 \end{align}

\item[\rm (b)] If $t<\tau^2$ and $0 < \xi < \xicr$, then
the recurrence coefficients $a_{k,n}$, $b_{k,n}$, $c_{k,n}$ exhibit
$2$-periodic behavior as $k/n \to \xi$ and we have
\begin{align}
 \lim_{\substack{k/n \to \xi \\ k \textrm{ even}}} a_{k,n} &= a_0(\xi) :=
    \frac{\tau^2-t-\sqrt{(\tau^2-t)^2-4\xi}}{2}, \label{eq: a0}  \\
    \lim_{\substack{k/n \to \xi \\ k \textrm{ odd}}} a_{k,n} &= a_1(\xi) :=
     \frac{\tau^2-t+\sqrt{(\tau^2-t)^2-4\xi}}{2}, \label{eq: a1} \\
  \lim_{\substack{k/n \to \xi \\ k \textrm{ even}}} b_{k,n} &= b_0(\xi) :=
    a_0(\xi)(a_0(\xi)+2a_1(\xi)+t), \label{eq: b0}  \\
    \lim_{\substack{k/n \to \xi \\ k \textrm{ odd}}} b_{k,n} &=  b_1(\xi) :=
    a_1(\xi)(2a_0(\xi)+a_1(\xi)+t), \label{eq: b1}  \\
    \lim_{\substack{k/n \to \xi \\ k \textrm{ even}}} c_{k,n} &= c_0(\xi) :=
    \tau^2a_0^2(\xi)a_1(\xi), \label{eq: c0}  \\
    \lim_{\substack{k/n \to \xi \\ k \textrm{ odd}}} c_{k,n} &= c_1(\xi)  :=
    \tau^2a_0(\xi)a_1^2(\xi). \label{eq: c1}   \end{align}

 \item[\rm (c)] If $t<\tau^2$ and $\xi = \xicr$, then
\begin{equation}
a(\xi) = a_0(\xi) = a_1(\xi), \quad b(\xi) = b_0(\xi) = b_1(\xi), \quad     c(\xi) = c_0(\xi) = c_1(\xi),
\label{eq: a a0 a1}
\end{equation}
 and all of the above limit relations continue to hold for $\xi = \xicr$.
 \end{enumerate}
\end{theorem}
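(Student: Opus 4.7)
The proof hinges on the Freud (string) equation, which is implicit in Lemma~\ref{lemma:relation coefficients}. Writing $s = \tau^2 - t$ and using \eqref{eq: ba}, the coefficients $a_{k,n}$ for the quartic weight \eqref{eq: weight} satisfy
\[
    a_{k,n}\bigl(a_{k-1,n}+a_{k,n}+a_{k+1,n}-s\bigr) = \frac{k}{n}, \qquad k \geq 1.
\]
Once this equation is analyzed asymptotically, the relations \eqref{eq: b}, \eqref{eq: c} and \eqref{eq: b0}--\eqref{eq: c1} for $b_{k,n}$ and $c_{k,n}$ follow by substitution into \eqref{eq: ba}--\eqref{eq: ca}.

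My first step is uniform boundedness: since $a_{k\pm 1,n}\geq 0$, the Freud equation gives $a_{k,n}(a_{k,n} - s) \leq k/n$, so $(a_{k,n})$ stays bounded whenever $k/n$ does. Second, given sequences $k_j, n_j \to \infty$ with $k_j/n_j \to \xi > 0$, a diagonal extraction produces a subsequence along which $\alpha_l := \lim_j a_{k_j + l, n_j} \geq 0$ exists for every $l \in \mathbb{Z}$; passing to the limit in the Freud equation at shifted index gives the autonomous recurrence
\[
    \alpha_l\bigl(\alpha_{l-1}+\alpha_l+\alpha_{l+1}-s\bigr) = \xi, \qquad l \in \mathbb{Z}.
\]

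The central task is to classify bounded non-negative solutions of this limiting recurrence. A constant solution satisfies $3\alpha^2 - s\alpha = \xi$, giving the unique positive root $\alpha = a(\xi) = (s+\sqrt{s^2+12\xi})/6$, which agrees with \eqref{eq: a}. Non-constant period-$2$ solutions must, upon subtracting the two instances of the recurrence, satisfy $\alpha_0 + \alpha_1 = s$ and $\alpha_0 \alpha_1 = \xi$; these are solvable in positive reals precisely when $s > 0$ and $0 < \xi \leq s^2/4 = \xicr$, and the resulting values match \eqref{eq: a0}--\eqref{eq: a1}. The key structural fact --- that every bounded positive orbit is either a fixed point or a $2$-cycle --- may be established by exhibiting a first integral (the recurrence is of QRT type) and analyzing the resulting algebraic level curves, or more directly by invoking the Riemann--Hilbert steepest-descent asymptotics, which read $\lim_{k/n\to\xi} a_{k,n}$ off the equilibrium measure of the weight \eqref{eq: weight}.

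With the classification in hand, part (a) is immediate: for $\xi > \xicr$ no $2$-cycle exists, so every subsequential limit equals $a(\xi)$. For part (b), in the regime $0 < \xi < \xicr$, one must still exclude the constant fixed point in favor of the $2$-cycle; I would exploit the symmetry $q_{k,n}(-y) = (-1)^k q_{k,n}(y)$ of the even weight together with the fact that the equilibrium measure for the double-well quartic with $s^2 > 4\xi$ is supported on two disjoint symmetric intervals, which forces alternation of even- and odd-indexed coefficients. The parity assignment $a_0(\xi) < a_1(\xi)$ matching even and odd subsequences is then fixed by convention. Part (c) is a purely algebraic check at $\xi = \xicr$, where $s^2 - 4\xi$ vanishes and $a_0 = a_1$ coincides with $a(\xicr)$. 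The main obstacle is precisely the classification step, and specifically ruling out the constant solution once a $2$-cycle is present; this is where external input (a Riemann--Hilbert analysis or an integrability argument for the Freud recurrence) is needed.
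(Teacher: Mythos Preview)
Your framework is sound in outline, but it is worth being clear that it does not ultimately avoid the hard analytic input that the paper simply cites. The paper's proof is a one-liner: the asymptotics \eqref{eq: a}, \eqref{eq: a0}, \eqref{eq: a1} for $a_{k,n}$ are quoted from the Riemann--Hilbert analyses of Bleher and Its \cite{BI1,BI2} for the quartic weight \eqref{eq: weight}, and then \eqref{eq: b}--\eqref{eq: c} and \eqref{eq: b0}--\eqref{eq: c1} follow by substitution into \eqref{eq: ba}--\eqref{eq: ca}. Your compactness-and-classification scheme unpacks more of the structure, which is instructive, but at the decisive step you also fall back on Riemann--Hilbert input.

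There is one genuine gap in your first alternative for the classification step. It is \emph{not} true that every bounded positive orbit of the autonomous Freud recurrence
\[
    \alpha_l(\alpha_{l-1}+\alpha_l+\alpha_{l+1}-s)=\xi
\]
is a fixed point or a $2$-cycle. The QRT first integral you allude to foliates the positive quadrant by invariant conics, and on a generic level curve the induced map is a rotation with typically irrational rotation number, so orbits are quasi-periodic rather than periodic. Exhibiting the invariant therefore does not by itself yield the dichotomy you want. What one actually needs is to know that the \emph{particular} orbit produced by the recurrence coefficients of the orthogonal polynomials lands on the fixed point (one-cut) or the $2$-cycle (two-cut); this selection is precisely the content of the Bleher--Its steepest-descent analysis, and there is no shortcut via integrability alone. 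Once you accept that, your argument and the paper's coincide: both defer the substance to \cite{BI1,BI2} and then read off $b$ and $c$ from Lemma~\ref{lemma:relation coefficients}.
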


\begin{proof}
The recurrence coefficients $a_{k,n}$ appear in a recurrence
relation for the orthogonal polynomials $p_{k,n}$. These
polynomials are orthogonal with respect to the weight \eqref{eq:
weight}. For this type of orthogonal polynomials the asymptotic
behavior of recurrence coefficients was studied by Bleher and Its
in two papers. The paper \cite{BI1} deals with the two-cut case $0< \xi < \xicr$.
In \cite{BI2} the critical case $\xi = \xicr$ is
studied and the results for the one-cut case $\xi > \xicr$ are
given as well.

Having \eqref{eq: a}, \eqref{eq: a0}, and \eqref{eq: a1}, the limits,
\eqref{eq: b}, \eqref{eq: c}, and \eqref{eq: b0}--\eqref{eq: c1}
follow directly from Lemma~\ref{lemma:relation coefficients}.
\end{proof}

\section{Asymptotic analysis in one-cut case} \label{sec: 1cc}

\subsection{Results from the literature} \label{subsec: bandedToeplitz}

In what follows we will associate with each $\xi > 0$  a function of the form
\begin{equation} \label{eq: symbols}
     s(w) = w + d^{(0)} +\frac{d^{(1)}}{w}+\frac{d^{(2)}}{w^2}+\frac{d^{(3)}}{w^3}, \qquad d^{(3)} \neq 0.
\end{equation}
Such functions appear as symbols of banded Toeplitz matrices \cite{BG},
and we need certain results \cite{DK1,KR} that were derived
in that context. Although we will not use Toeplitz matrices in this paper, we still
refer to $s$ as the symbol.

We denote the solutions of the algebraic equation $s(w) = z$ by $w_j(z)$, $j=1,\ldots,4$ and order
them by their absolute value, such that
\begin{equation}
    |w_1(z)| \geq |w_2(z)| \geq |w_3(z)| \geq |w_4(z)| > 0.
\label{eq: order 1}
\end{equation}
Typically, there is strict inequality in \eqref{eq: order 1}.
If for certain $z \in \mathbb C$ two solutions have the same absolute
value, then we pick an arbitrary numbering that satisfies \eqref{eq: order 1}.
Furthermore, we define
\begin{equation}
    \Gamma_j = \{z \in \mathbb C \mid |w_j(z)| = |w_{j+1}(z)|\}, \quad j=1,2,3,
    \label{eq: Gammaj}
\end{equation}
which are finite unions of analytic arcs and exceptional points,
see \cite{BG,DK1}. A point $z \in \mathbb C$ for which the algebraic
equation $s(w) = z$ has a multiple solution is called a branch
point.

We use the solutions $w_j(z)$ to the algebraic equation to define three Borel measures
\begin{equation}
    \ud \mu_j(z) = \frac{1}{2\pi i} \sum_{k=1}^{j}
    \left( \frac{{w_k'}_-(z)}{{w_k}_-(z)} - \frac{{w_k'}_+(z)}{{w_k}_+(z)}\right) \ud z,
\label{eq: muj}
\end{equation}
for $z \in \Gamma_j$, $j=1,2,3$. Here, it is assumed that every analytic arc of
$\Gamma_j$ is provided with an orientation and that $\ud z$
denotes the complex line element on $\Gamma_j$ according to this orientation. Furthermore, $w_{k\pm}(z)$ is the limiting
value of $w_{k}(\tilde z)$ as $\tilde z \to z$ from the $\pm$
side on each of the arcs in $\Gamma_j$. The $+$~side ($-$~side) is
on the left (right) if one traverses $\Gamma_j$ according to the
orientation.

The vector of measures $(\mu_1,\mu_2,\mu_3)$ is characterized as
the unique minimizer of a vector equilibrium problem.

\begin{theorem} \label{th: DuKuToeplitz}
Define the energy functional $E_0$ as
\[
    E_0(\rho_1,\rho_2,\rho_3) = I(\rho_1)-I(\rho_1,\rho_2)+I(\rho_2)-I(\rho_2,\rho_3)+I(\rho_3),
\]
where $\rho_1, \rho_2,\rho_3$ are positive measures on $\mathbb C$ with
finite logarithmic energy. Then the following statements hold.
\begin{itemize}
\item[\rm (a)] The vector of measures $(\mu_1, \mu_2, \mu_3)$ given
by \eqref{eq: muj} is the unique minimizer for the functional
 $E_0$ among all vectors $(\rho_1, \rho_2,\rho_3)$ of positive measures with
 finite logarithmic energy, satisfying
\begin{itemize}
\item[\rm (i)] $\supp (\rho_j) \subset \Gamma_j$, for $j=1,2,3$, and
\item[\rm (ii)] $\rho_1(\Gamma_1)= 1$, $\rho_2(\Gamma_2) = 2/3$, and $\rho_3(\Gamma_3) = 1/3$.
\end{itemize}
\item[\rm (b)]
The measures $\mu_1,\mu_2,\mu_3$ satisfy for some constant $\ell$
\begin{align}
\ell-2U^{\mu_{1}}(z)+U^{\mu_{2}}(z) &=\log \left| \frac{w_1(z)}{w_2(z)}\right| ,   \label{eq: EL 1} \\
U^{\mu_{1}}(z)-2U^{\mu_{2}}(z)+U^{\mu_{3}}(z) &= \log \left| \frac{w_2(z)}{w_3(z)}\right|,   \label{eq: EL 2} \\
U^{\mu_{2}}(z)-2U^{\mu_{3}}(z) &= \log \left| \frac{w_3(z)}{w_4(z)}\right|,   \label{eq: EL 3}
\end{align}
for every $z \in \C$.
\end{itemize}
\end{theorem}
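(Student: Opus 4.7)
The plan is to prove (a) and (b) in parallel by first establishing an explicit global identity expressing $\log\bigl|\prod_{k=1}^{j}w_k(z)\bigr|$ as a logarithmic potential plus a constant, and then reading off both the variational characterization of the minimizer and the equations in (b) from that identity.

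For existence and uniqueness in (a), I would first note that the quadratic part of $E_0$ is governed by the Cartan matrix of type $A_3$, which is positive definite. Hence $E_0$ is strictly convex on the admissible class of vector measures with finite logarithmic energy; together with lower semicontinuity and standard tightness/compactness arguments this yields a unique minimizer $(\mu_1^*,\mu_2^*,\mu_3^*)$. This minimizer is characterized by the usual first-order conditions: for each $j=1,2,3$ the function $-U^{\mu_{j-1}^*}+2U^{\mu_j^*}-U^{\mu_{j+1}^*}$ (with $\mu_0^*=\mu_4^*:=0$) equals some constant $\ell_j\in\R$ on $\supp(\mu_j^*)$ and is at least $\ell_j$ on $\Gamma_j$. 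It therefore suffices to verify that the concrete measures $(\mu_1,\mu_2,\mu_3)$ from \eqref{eq: muj} satisfy these conditions, and the same verification will give (b).

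The central step is to prove the identity
\[
 \log\Bigl|\prod_{k=1}^{j} w_k(z)\Bigr| \;=\; -U^{\mu_j}(z)+C_j,\qquad z\in\C,\ j=1,2,3,
\]
for explicit constants $C_j$. Set $P_j(z)=\prod_{k=1}^{j}w_k(z)$. Away from $\Gamma_1\cup\Gamma_2\cup\Gamma_3$ each $w_k$ is locally a single-valued analytic function of $z$, so $\log|P_j|$ is harmonic there. Across $\Gamma_k$ with $k\neq j$ the set $\{w_1,\ldots,w_j\}$ is either preserved or unaffected, so $\log|P_j|$ extends harmonically across such arcs; across $\Gamma_j$ itself the branches $w_j$ and $w_{j+1}$ are interchanged, which keeps $|P_j|$ continuous (as $|w_j|=|w_{j+1}|$ on $\Gamma_j$) but produces a jump in $\partial_z\log|P_j|=\tfrac12\sum_{k=1}^{j}w_k'/w_k$ that, via \eqref{eq: muj}, exactly matches the density of $\mu_j$. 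A standard Plemelj/distributional calculation then gives $\Delta\log|P_j|=2\pi\mu_j$, so $\log|P_j|+U^{\mu_j}$ is a globally harmonic function on $\C$. Applying Vieta's formulas to $w^4+(d^{(0)}-z)w^3+d^{(1)}w^2+d^{(2)}w+d^{(3)}=0$ gives $w_1(z)\sim z$ and $w_2(z),w_3(z),w_4(z)\sim (d^{(3)}/z)^{1/3}$ as $|z|\to\infty$, so both $\log|P_j(z)|$ and $-U^{\mu_j}(z)$ grow like $\tfrac{4-j}{3}\log|z|$ at infinity; the harmonic difference is therefore bounded and hence constant. A by-product of this matching of asymptotics is that $\mu_j$ is automatically a positive measure on $\Gamma_j$ of total mass $(4-j)/3\in\{1,2/3,1/3\}$, as required by (a)(ii).

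The rest is algebraic. Writing $\log|w_j|=\log|P_j|-\log|P_{j-1}|$ and subtracting consecutive instances of the identity yields, for $j=1,2,3$,
\[
 \log\left|\frac{w_j(z)}{w_{j+1}(z)}\right| \;=\; -U^{\mu_{j-1}}(z)+2U^{\mu_j}(z)-U^{\mu_{j+1}}(z)+\kappa_j,
\]
where $\kappa_j$ is computable from the $C_j$'s: one checks that $\kappa_2=\kappa_3=0$ and sets $\ell:=\kappa_1$, which gives exactly \eqref{eq: EL 1}--\eqref{eq: EL 3}. Since $|w_j|\geq|w_{j+1}|$ everywhere with equality precisely on $\Gamma_j$, the identity forces $-U^{\mu_{j-1}}+2U^{\mu_j}-U^{\mu_{j+1}}$ to take the same constant value on all of $\Gamma_j$, and so in particular on $\supp(\mu_j)\subset\Gamma_j$; the variational conditions of Step~1 are thus met, and $(\mu_1,\mu_2,\mu_3)$ is identified with the unique minimizer, proving (a) and (b) simultaneously. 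The main obstacle is the careful distributional analysis of $\Delta\log|P_j|$ near the branch points of the algebraic equation $s(w)=z$ and near exceptional intersection points of the $\Gamma_k$'s, where one must rule out hidden Dirac-type contributions and choose the orientation of $\Gamma_j$ so that the signed density in \eqref{eq: muj} is genuinely non-negative; this is the sort of bookkeeping handled in \cite{DK1,KR}.
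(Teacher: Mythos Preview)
Your sketch is essentially the argument carried out in \cite{DK1}, and it is correct in outline: the identity $\log\bigl|\prod_{k\le j}w_k(z)\bigr|=-U^{\mu_j}(z)+C_j$ is exactly the engine used there, obtained via the distributional computation $\Delta\log|P_j|=2\pi\mu_j$ together with the asymptotic matching at infinity; the Euler--Lagrange relations and the identification with the unique minimizer then follow as you describe. The paper itself does not reproduce this argument at all --- its ``proof'' simply refers to \cite{DK1} and adds the single observation that the constants $\ell_2,\ell_3$ vanish. You recover that observation by computing your $\kappa_2,\kappa_3$ from the large-$|z|$ behavior of both sides; this is equivalent to the paper's remark that $\Gamma_2$ and $\Gamma_3$ are unbounded, since unboundedness is what lets one let $z\to\infty$ within the identity and use $U^{\mu_j}(z)=-\mu_j(\Gamma_j)\log|z|+o(1)$.

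One small caution: the asymptotic matching shows that the total mass of $\mu_j$ is $(4-j)/3$, but it does not by itself give positivity of $\mu_j$; that requires the separate argument (also in \cite{DK1}) that the density in \eqref{eq: muj} is nonnegative with the chosen orientation, which you flag as a bookkeeping point at the end. With that caveat, your proposal matches the cited source and goes well beyond what the present paper supplies.
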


\begin{proof}
The proof of Theorem \ref{th: DuKuToeplitz} can be found
in \cite{DK1}. The conditions (\ref{eq: EL 1})--(\ref{eq: EL 3}) are the
Euler-Lagrange variational conditions for the vector equilibrium
problem. Note that the right-hand side of the $j$th variational condition
vanishes if $z \in \Gamma_j$. In \cite{DK1} there also appear constants
$\ell_2$ and $\ell_3$ in \eqref{eq: EL 2} and \eqref{eq: EL 3}.
However, these constants vanish because $\Gamma_2$ and $\Gamma_3$
are unbounded and
\[ U^{\mu_j}(z) = - \mu_j(\Gamma_j) \, \log |z| + o(1), \qquad \text{as } z \to \infty. \qedhere
\]
\end{proof}

If the symbol \eqref{eq: symbols} depends on a parameter $\xi > 0$, say
\begin{equation} \label{eq: swxi}
    s(w;\xi) = w + d^{(0)}(\xi) + \frac{d^{(1)}(\xi)}{w} + \frac{d^{(2)}(\xi)}{w^2}
    + \frac{d^{(3)}(\xi)}{w^3},
    \end{equation}
then we use  $w_j(z; \xi)$, $\Gamma_j(\xi)$ and $\mu_j^{\xi}$
to indicate the dependence of the notions from \eqref{eq: order 1}, \eqref{eq: Gammaj}
and \eqref{eq: muj}, respectively, on the parameter $\xi$.

Next, we state a result of Kuijlaars and Rom\'an \cite{KR} on polynomials
satisfying certain recurrence relations.
It will be the key ingredient of the proof of Theorem \ref{th: zero distribution}
given in Section \ref{sec: zero distribution}.

\begin{theorem} \label{th: KR}
Let for each $n \in \mathbb N$ a sequence of monic polynomials
$(p_{k,n})_{k=0}^\infty$ be given where $\deg p_{k,n}=k$. Furthermore, suppose that
\begin{itemize}
\item[\rm (a)] these polynomials satisfy the recurrence relations
\begin{equation} \label{eq: fiveterm}
xp_{k,n}(x)=p_{k+1,n}(x)+d_{k,n}^{(0)}p_{k,n}+d_{k,n}^{(1)}p_{k-1,n}+d_{k,n}^{(2)}p_{k-2,n}+d_{k,n}^{(3)}p_{k-3,n},
\end{equation}
for certain real recurrence coefficients $d_{k,n}^{(j)}$, $j=0,1,2,3$;
\item[\rm (b)] the polynomials $p_{k,n}$ have real and simple zeros
$x_1^{k,n} < \cdots < x_k^{k,n}$ satisfying for each $k$ and $n$ the interlacing property
\[
x_j^{k+1,n}<x_j^{k,n}<x_{j+1}^{k+1,n}, \quad \textrm{ for }j=1,\ldots,k;
\]
\item[\rm (c)] for each $j = 0, 1,2,3$ the set of recurrence coefficients
\[ \{ d_{k,n}^{(j)} \mid k+1 \leq n \}
\]
is bounded;
\item[\rm (d)] there exist continuous functions $d^{(j)}:(0,+\infty) \to \R$, $j=0,1,2,3$,
    such that for each $\xi > 0$
\begin{equation} \label{eq: limitsdjxi}
    \lim_{k/n \to \xi} d_{k,n}^{(j)} = d^{(j)}(\xi),
\end{equation}
and $d^{(3)}(\xi) \neq 0$;
\item[\rm (e)] we have
\begin{equation*} \label{eq: Gamma1real}
    \Gamma_1(\xi) \subset \R, \qquad \text{for every } \xi > 0
    \end{equation*}
where $\Gamma_1(\xi)$ is the set defined as in \eqref{eq: Gammaj}
corresponding to the $\xi$-dependent function \eqref{eq: swxi}
with $d^{(j)}(\xi)$ coming from \eqref{eq: limitsdjxi}.
\end{itemize}

Then, the normalized zero counting measures $\nu(p_{k,n})$ have
a weak limit as $k,n \to \infty$ with $k/n \to \lambda>0$ given by
\begin{equation} \label{eq: averagemu1}
    \lim_{k/n \to \lambda} \nu(p_{k,n}) = \frac{1}{\lambda} \int_0^\lambda \mu_1^\xi \ud \xi,
\end{equation}
where for each $\xi > 0$ the measure $\mu_1^{\xi}$ is given by
\eqref{eq: muj} corresponding to the function \eqref{eq: swxi}.
\end{theorem}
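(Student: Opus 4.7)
The plan is to realise the zeros of $p_{k,n}$ as eigenvalues of the banded Hessenberg matrix $H_{k,n}$ of size $k$ encoded by the five-term recurrence \eqref{eq: fiveterm}: its superdiagonal is $1$, its main diagonal reads $d^{(0)}_{i-1,n}$ at row $i$, and its subsequent subdiagonals carry the entries $d^{(1)}_{i-1,n}$, $d^{(2)}_{i-1,n}$, $d^{(3)}_{i-1,n}$. Then $p_{k,n}$ is (up to sign) the characteristic polynomial of $H_{k,n}$, so the normalised zero-counting measure satisfies $\int x^p \, \ud \nu(p_{k,n}) = \frac{1}{k}\Tr H_{k,n}^p$. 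Hypothesis (c) gives a uniform bound on the entries of $H_{k,n}$, and together with hypothesis (b) (interlacing of real zeros) this places all zeros in a fixed compact subset of $\R$, making the family $\{\nu(p_{k,n})\}$ tight. It will therefore suffice to identify the limit of every power moment and invoke the moment method.

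The key structural observation is that $(H_{k,n}^p)_{ii}$ is a finite sum over closed lattice paths of length $p$ on $\mathbb Z$ with step set $\{-1,0,1,2,3\}$ starting and ending at $i$, dictated by the band structure. Every factor in each product is of the form $d^{(m)}_{j,n}$ (or $1$) with $|j-i|\leq 3p$, so the $i$-th diagonal entry depends only on $O(p)$ recurrence coefficients near row $i$. For $i/n$ close to some fixed $\xi>0$, hypothesis (d) together with continuity of $d^{(m)}(\cdot)$ forces
\[
    (H_{k,n}^p)_{ii} \longrightarrow M_p(\xi) := [w^0]\, s(w;\xi)^p,
\]
where $s(w;\xi)$ is the symbol \eqref{eq: swxi} and $[w^0]$ extracts the constant Laurent coefficient. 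The identity $\sum_{\text{paths}} = [w^0]s(w;\xi)^p$ is a direct consequence of expanding $s(w;\xi)^p$ and identifying path weights with Laurent coefficients. Splitting the trace into blocks where $i/n$ is approximately constant and using a Riemann-sum argument yields
\[
    \frac{1}{k}\Tr H_{k,n}^p \longrightarrow \frac{1}{\lambda}\int_0^\lambda M_p(\xi)\,\ud\xi
    \qquad \text{as } k/n \to \lambda.
\]

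The final identification step uses hypothesis (e) together with Theorem~\ref{th: DuKuToeplitz}. The measure $\mu_1^\xi$ is, by the banded-Toeplitz interpretation underlying Theorem~\ref{th: DuKuToeplitz}, the asymptotic eigenvalue-counting measure of the banded Toeplitz matrix with symbol $s(\cdot;\xi)$; the classical Szeg\H{o}-type moment formula for such Toeplitz matrices gives $\int x^p\,\ud\mu_1^\xi = [w^0]\,s(w;\xi)^p = M_p(\xi)$, and condition (e), $\Gamma_1(\xi)\subset\R$, guarantees $\mu_1^\xi$ is supported on $\R$ so the moments characterise it. Combining the previous steps,
\[
    \int x^p \, \ud \nu(p_{k,n})
    \longrightarrow
    \frac{1}{\lambda}\int_0^\lambda \!\!\int x^p \, \ud\mu_1^\xi \, \ud\xi,
\]
for every $p\geq 0$, and tightness upgrades moment convergence to weak convergence, which is \eqref{eq: averagemu1}.

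The main technical obstacle is the Riemann-sum step: reconciling the two limits $k/n\to\lambda$ and $i/n\to\xi$ and controlling the error uniformly. One handles this by partitioning $\{1,\ldots,k\}$ into $O(1/\varepsilon)$ blocks of size $\varepsilon n$; within each block $i/n$ varies by at most $\varepsilon$, continuity of the $d^{(m)}(\cdot)$ makes the local matrix close to a true Toeplitz matrix with a fixed symbol, and boundary rows (within $3p$ of a block edge) contribute only $O(p/k)$ to the averaged trace. Letting first $k,n\to\infty$ and then $\varepsilon\to 0$ finishes the argument.
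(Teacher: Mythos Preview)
The paper does not give its own proof of this statement: it simply cites \cite[Theorem~1.2]{KR}. Your sketch is a correct outline of the moment-method argument underlying that result, and it matches exactly the heuristic the authors spell out in the paragraph following the theorem (zeros as eigenvalues of the banded Hessenberg matrix \eqref{eq: Toeplitz}, local approximation by Toeplitz matrices with symbol $s(\cdot;\xi)$, and averaging over $\xi$).

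One point deserves a sharper statement. The identity $\int x^p\,\ud\mu_1^\xi = [w^0]\,s(w;\xi)^p$ is correct, but it is not just ``the classical Szeg\H{o}-type moment formula''. It combines two ingredients: (i) the elementary trace asymptotics $\frac{1}{n}\Tr T_n(s)^p \to [w^0]\,s^p$ for banded Toeplitz matrices, and (ii) the Schmidt--Spitzer/Hirschman theorem (references \cite{SS,Hi,DK1} in the paper) that the eigenvalue counting measures of $T_n(s)$ converge weakly to $\mu_1^\xi$ on $\Gamma_1(\xi)$, together with uniform boundedness of the spectra to upgrade weak convergence to convergence of moments. Ingredient (ii) is the nontrivial input here; once you invoke it, your argument goes through as written. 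The tightness, boundary-row, and Riemann-sum considerations you describe are the right technical points and are handled correctly.
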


\begin{proof}
This is \cite[Theorem 1.2]{KR} for the case of a five term recurrence \eqref{eq: fiveterm}.
\end{proof}

The intuition behind Theorem \ref{th: KR} is that the zeros of $p_{k,n}$
are eigenvalues of a $k \times k$ matrix with five non-zero diagonals
\begin{equation} \label{eq: Toeplitz}
    \begin{pmatrix} d_{0,n}^{(0)} & 1 & 0 & \ldots & \ldots & \ldots & 0 \\
    d_{1,n}^{(1)} & d_{1,n}^{(0)} & 1 & 0 &  & & \vdots \\
    d_{2,n}^{(2)} & d_{2,n}^{(1)} & d_{2,n}^{(0)} & 1 & 0 & & \vdots  \\
    d_{3,n}^{(3)} & d_{3,n}^{(2)} & d_{3,n}^{(1)} & d_{3,n}^{(0)} & 1 & 0 & \\
    0 & \ddots & \ddots & \ddots & \ddots & \ddots & \ddots \\
      &   \ddots & \ddots & \ddots & \ddots & \ddots & \ddots \\
      & &   \ddots & \ddots & \ddots & \ddots & \ddots &
    \end{pmatrix}. \end{equation}
Under the assumption \eqref{eq: limitsdjxi} the entries are slowly varying
along the diagonals if $k$ and $n$ are large, so that locally the matrix \eqref{eq: Toeplitz}
looks like a five-diagonal Toeplitz matrix.  Then for each $\xi$ one considers
the exact Toeplitz matrices with the entries $d^{(3)}(\xi), d^{(2)}(\xi), d^{(1)}(\xi), d^{(0)}(\xi), 1$
along the diagonals for which it is known, see \cite{BG,DK1,Hi,SS},  that the eigenvalues accumulate
on $\Gamma_1(\xi)$ as the size grows, with $\mu_1^{\xi}$ as  limiting normalized eigenvalue counting
measure.  The distribution of the eigenvalues of \eqref{eq: Toeplitz}
is then obtained by averaging of the measures $\mu_1^{\xi}$ as in \eqref{eq: averagemu1}.

\subsection{Analysis of the symbol $s_1$ in the one-cut case} \label{subsec: 1cc}

It will be our goal to apply Theorem \ref{th: KR} to the biorthogonal
polynomials $p_{k,n}$ that have the recurrence relation \eqref{eq: 5trr}.
The recurrence coefficients in \eqref{eq: 5trr} have the appropriate
limits only in the one-cut case. We discuss this case first.

We therefore assume that $\xi > \xicr$. In that case we have
by Theorem \ref{th:asympt recurrence coefficients} that the recurrence coefficients
$b_{k,n}$, $c_{k,n}$ have limits $b(\xi)$ and $c(\xi)$ given by \eqref{eq: b} and \eqref{eq: c}
as $k, n \to \infty$ and $k/n \to \xi$.  We therefore associate with $\xi > \xicr$ the
symbol
\begin{equation}
    s_1(w;\xi) = w + \frac{b(\xi)}{w} + \frac{c(\xi)}{w^3}.
\label{eq: s1}
\end{equation}
As already noted before, we use $w_j(z;\xi)$ for  $j=1,2,3,4$, and $\Gamma_j(\xi)$, $\mu_j^{\xi}$
for $j=1,2,3$ to denote the quantities related to the symbol \eqref{eq: s1}.

In order to apply Theorem \ref{th: KR} we need to know that
$\Gamma_1(\xi) \subset \mathbb R$, see assumption (e) in Theorem
\ref{th: KR}. Then the proof of Theorem~\ref{th:
equilibrium problem} follows the approach outlined in
\cite[Section 7]{KR}. So, to obtain an external field $V_1$ acting
on $\nu_1$ and an upper constraint $\sigma$ acting on $\nu_2$ we
will need that $\Gamma_1(\xi)$ is an increasing  and
$\Gamma_2(\xi)$ a decreasing set as a function of $\xi$. These
features are contained in the following theorem.

\begin{theorem} \label{th: gamma 1cc}
Let $\tau > 0$ and $t \in \mathbb R$. Then for every $\xi > \xicr$
we have that
\[ \Gamma_1(\xi) \subset \mathbb R, \quad \Gamma_2(\xi) \subset i \mathbb R,
    \quad \text{and} \quad  \Gamma_3(\xi) \subset \mathbb R. \]
For $\xi>\xicr$ the set $\Gamma_1(\xi)$ is increasing as a function of $\xi$,
while the set $\Gamma_2(\xi)$ is decreasing.
More precisely, there exist $\alpha(\xi), \gamma(\xi) > 0$ such that
\begin{equation}
\begin{aligned}
    \Gamma_1(\xi) & = [-\alpha(\xi),\alpha(\xi)], \\
    \Gamma_2(\xi) & = i\R \setminus (-i\gamma(\xi),i\gamma(\xi)), \\
    \Gamma_3(\xi) & = \R.
    \end{aligned}
\label{eq: gamma 1cc}
\end{equation}
In addition, we have that
\begin{itemize}
\item[\rm (a)] $\xi \mapsto \alpha(\xi)$ is strictly increasing for $\xi \geq \xicr$ with
\begin{align*}
    \lim_{\xi \to +\infty} \alpha(\xi) = +\infty, \quad \text{and} \quad
    \lim_{\xi \to 0+} \alpha(\xi)=0 \text{ if } t \geq \tau^2,
    \end{align*}
\item[\rm (b)] $\xi \mapsto \gamma(\xi)$ is strictly increasing for $\xi \geq \xicr$ with
\begin{align*}
    \lim_{\xi \to +\infty} \gamma(\xi) = +\infty, \quad \text{and} \quad
    \lim_{\xi \to 0+} \gamma(\xi) = y^* \text{ if } t \geq \tau^2,
    \end{align*}
    see \eqref{eq: ystar} for the definition of $y^*$.
\end{itemize}
\end{theorem}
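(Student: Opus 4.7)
The plan is to study the algebraic curve $P(w;z) := w^4 - zw^3 + b(\xi) w^2 + c(\xi) = 0$ associated with $s_1(w;\xi)=z$, exploiting the oddness $s_1(-w;\xi) = -s_1(w;\xi)$ and reality $\overline{s_1(w;\xi)} = s_1(\bar w;\xi)$ of the symbol. First I would locate the branch points by solving $s_1'(w)=0$, i.e., $w^4 - b w^2 - 3c = 0$: since $b,c>0$ for $\xi > \xicr$, this quadratic in $w^2$ has one positive root giving real critical points $\pm w_+$ and one negative root giving imaginary critical points $\pm iv$, whose images $\pm\alpha(\xi)\in\R$ and $\pm i\gamma(\xi)\in i\R$ are the four branch points of $z \mapsto w_j(z;\xi)$ with $\alpha,\gamma>0$.

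The core step is to restrict $\Gamma_1\cup\Gamma_2\cup\Gamma_3$ to $\R\cup i\R$. Two ingredients are needed. First, the algebraic inequality $b(\xi)^2 > 4c(\xi)$ throughout the one-cut regime: writing $b^2-4c = a^2[(3a+t)^2 - 4\tau^2 a]$ and analyzing the quadratic $f(a) = 9a^2 + (6t-4\tau^2)a + t^2$ in $a$ (which, when it has real roots $a_- < a_+$, is negative only on $(a_-,a_+)$), a direct Vieta-type computation gives the identity
\[ a_{\mathrm{cr}} - a_+ = \frac{(2\tau - \sqrt{\tau^2-3t}\,)^2}{18} \geq 0, \]
where $a_{\mathrm{cr}} = a(\xicr) = (\tau^2-t)/2$; combined with the strict monotonicity of $a(\xi)$, this forces $f(a(\xi))>0$ and hence $b^2 > 4c$ strictly for $\xi > \xicr$. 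Second, a polar-form calculation: if $s_1(w_1) = s_1(w_2) = z$ with $w_1\ne w_2$, $|w_1|=|w_2|=r$, and $w_j = re^{i\theta_j}$, setting $2\phi = \theta_1+\theta_2$ and $2\psi = \theta_1-\theta_2$ yields the real-valued identity
\[ r e^{4i\phi} - (b/r)\, e^{2i\phi} = (c/r^3)(3 - 4\sin^2\psi), \]
whose imaginary part factors as $\sin(2\phi)\bigl[2r\cos(2\phi) - b/r\bigr] = 0$. The first factor forces $w_2\in\{\bar w_1,-\bar w_1\}$, placing $z\in\R\cup i\R$ via the symmetries of $s_1$. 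The second factor requires $r^2\geq b/2$ and, after substitution into the real part of the identity, $r^4\leq c$, so $b^2\leq 4c$, contradicting what was just shown.

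With $\Gamma_j \subset \R\cup i\R$ in hand, I would trace the four roots as $z$ varies along each axis. At $z=0$ the quartic factors as $(w^2+b_1)(w^2+b_2)$ with $b_{1,2}$ the positive roots of $x^2-bx+c=0$; the four roots are $\pm i\sqrt{b_1},\pm i\sqrt{b_2}$, giving $|w_1|=|w_2|=\sqrt{b_2}>\sqrt{b_1}=|w_3|=|w_4|$. As $z$ moves along $\R$, by continuity the outer conjugate pair persists, coalesces at $\pm w_+$ when $|z|=\alpha(\xi)$, and separates into two real roots for $|z|>\alpha(\xi)$, while the inner conjugate pair persists for all $z\in\R$. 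This yields $\Gamma_1 = [-\alpha,\alpha]$ and $\Gamma_3 = \R$. Along $i\R$ the substitution $w=iu$ converts the quartic into the real quartic $u^4 - y u^3 - b u^2 + c = 0$ (with $z=iy$); the analogous tracing (four real $u$-roots with generically distinct moduli for $|y|<\gamma$; two real $u$-roots and a conjugate pair corresponding to $(w_2,w_3)$ in our ordering for $|y|>\gamma$) yields $\Gamma_2 = i\R\setminus(-i\gamma,i\gamma)$. Strict monotonicity of $\alpha(\xi),\gamma(\xi)$ then follows from the envelope theorem applied to $\alpha = s_1(w_+;\xi)$ and $\gamma = |\Im s_1(iv;\xi)|$, using that $b'(\xi),c'(\xi)>0$. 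The limits as $\xi\to\infty$ are immediate; for $t\geq\tau^2$ and $\xi\to 0^+$, $a\to 0$ gives $\alpha\to 0$, while the expansion $v^2 \sim 3\tau^2 a^2/t$ yields $\gamma\to 2 t^{3/2}/(3\sqrt 3\,\tau) = y^*$.

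The step I expect to be most delicate is the off-axis exclusion: although the polar-form factorization is clean, discarding the secondary branch $\cos(2\phi) = b/(2r^2)$ hinges on the strict inequality $b^2 > 4c$ for $\xi>\xicr$, which in turn rests on the non-obvious identity $a_{\mathrm{cr}} - a_+ = (2\tau-\sqrt{\tau^2-3t}\,)^2/18$. This identity is precisely what makes $\xi=\xicr$ the dividing point between the one-cut and two-cut regimes in the present analysis.
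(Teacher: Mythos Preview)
Your proof is sound in its architecture—branch points, off-axis exclusion, axis tracing, envelope theorem, limits—and matches the paper's overall strategy. The substantive difference is in how you exclude off-axis points from $\Gamma_1\cup\Gamma_2\cup\Gamma_3$: the paper factors $s_1(w;\xi)=(w^2+y_1^2)(w^2+y_2^2)/w^3$ and argues that $\theta\mapsto|s_1(re^{i\theta})|$ is strictly monotone on each half-circle, forcing equal-modulus preimages to be related by $w\mapsto\pm\bar w$; your polar-form identity is a clean algebraic substitute that yields the same conclusion. Both routes hinge on $b(\xi)^2>4c(\xi)$—in the paper this appears as the unproved assertion $u(\xi)^2-3v(\xi)^2>0$, which is equivalent—so your explicit verification via $a_{\mathrm{cr}}-a_+=(2\tau-\sqrt{\tau^2-3t}\,)^2/18$ actually fills a detail the paper leaves open.

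There is, however, one gap on your side. For $\gamma$, the envelope formula reads $\gamma'(\xi)=b'/v-c'/v^3$, and its positivity does \emph{not} follow from $b',c'>0$ alone: the two terms have opposite signs. Substituting $b'=(6a+t)a'$, $c'=3\tau^2a^2a'$, and the critical-point relation $v^4+bv^2=3c$, one finds $\gamma'=a'(3a^2-v^2)/(av)$, so the missing ingredient is $v^2<3a^2$. This in turn is equivalent to $6a(\xi)+t>\tau^2$, which is immediate from the closed form $6a(\xi)=\tau^2-t+\sqrt{(\tau^2-t)^2+12\xi}$. The paper reduces the friction here by first passing to the transformed symbol $S_1(W;\xi)=s_1(a(\xi)W;\xi)=a(\xi)(W+3/W)+t/W+\tau^2/W^3$, whose only $\xi$-dependence sits in the prefactor $a(\xi)$; the envelope then gives $\gamma'=a'(3-V^2)/V$ directly, though the equivalent side condition $V^2<3$ is likewise left to the reader there.
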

Figure \ref{fig: 1cc} shows the sets $\Gamma_1(\xi)$, $\Gamma_2(\xi)$, and $\Gamma_3(\xi)$
in the complex plane in the one-cut case.

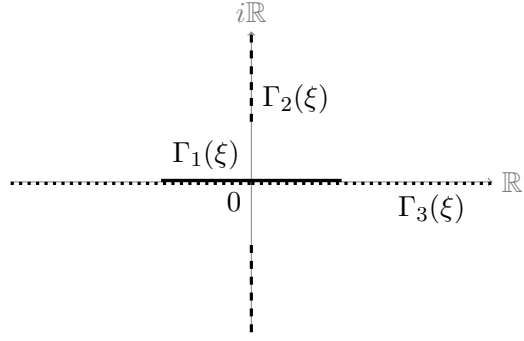
\begin{figure}
\begin{center}
\begin{tikzpicture}[scale=0.4]
\draw[help lines,->] (-8,0)--(8,0) node[right]{$\mathbb R$};
\draw[help lines,->] (0,-5)--(0,5) node[above]{$i \mathbb R$};
\draw (0,0) node[below left] {0};
\draw[very thick] (-3,0.05)-- node[near start,above]{$\Gamma_1(\xi)$}(3,0.05);
\draw[dashed, very thick] (0,-5)--(0,-2)  (0,2)-- node[near start,right]{$\Gamma_2(\xi)$}(0,5);
\draw[dotted, very thick] (-8,-0.05)--(0,-0.05);
\draw[dotted, very thick] (0,-0.05)-- node[near end,below]{$\Gamma_3(\xi)$} (8,-0.05);
\end{tikzpicture}
\caption{The sets $\Gamma_1(\xi)$ (plain), $\Gamma_2(\xi)$ (dashed), and $\Gamma_3(\xi)$ (dotted)
in the one-cut case. We have that $\Gamma_1(\xi) = [-\alpha(\xi), \alpha(\xi)]$,
$\Gamma_2(\xi) = (-i \infty, -i \gamma(\xi)] \cup [i \gamma(\xi), i \infty)$, and $\Gamma_3(\xi) = \mathbb R$.}
\label{fig: 1cc}
\end{center}
\end{figure}

The proof of Theorem \ref{th: gamma 1cc} is given in the next
subsection. Here we note that as a consequence of
Theorem \ref{th: gamma 1cc} two consecutive sets among
$\Gamma_1(\xi)$, $\Gamma_2(\xi)$ and $\Gamma_3(\xi)$
are not overlapping, which implies that \eqref{eq: muj}
can be written more simply as
\begin{equation}
    \ud \mu_j^{\xi} (z) =
    \frac{1}{2\pi i}
    \left( \frac{{w_j'}_-(z;\xi)}{{w_j}_-(z;\xi)} - \frac{{w_j'}_+(z;\xi)}{{w_j}_+(z;\xi)}\right) \ud z,
    \quad z \in \Gamma_j(\xi), \quad j=1,2,3.
\label{eq: muj 1cc}
\end{equation}

\subsection{Proof of Theorem \ref{th: gamma 1cc}}

\subsubsection{Branch points}

As a first step to the proof of Theorem \ref{th: gamma 1cc}
we calculate the branch points for the algebraic equation $s_1(w;\xi) = z$.

\begin{lemma} \label{lem: branch points1}
Let $\xi > \xicr$. Define $u(\xi),v(\xi)>0$ such that
\begin{align}
u(\xi)^2 & = \frac{\tau^2a(\xi)+\xi+\sqrt{(\tau^2a(\xi)+\xi)^2+12\tau^2a^3(\xi)}}{2} > 0, \label{eq: u} \\
-v(\xi)^2 & = \frac{\tau^2a(\xi)+\xi-\sqrt{(\tau^2a(\xi)+\xi)^2+12\tau^2a^3(\xi)}}{2} < 0. \label{eq: v}
\end{align}
Then, the branch points are $\pm \alpha(\xi),\pm i \gamma(\xi) $ where
\begin{align}
\alpha(\xi) & = s_1(u(\xi),\xi)=2u(\xi)-\frac{2v(\xi)^2}{3u(\xi)}>0 \nonumber \\
-i\gamma(\xi) & =s_1(iv(\xi),\xi)=i\left( 2v(\xi)-\frac{2u(\xi)^2}{3v(\xi)}\right), \label{eq: alpha gamma}
\end{align}
and $\alpha(\xi),\gamma(\xi)>0$.
Moreover, we can rewrite the symbol as
\[
s_1(w;\xi)=w+\frac{u(\xi)^2-v(\xi)^2}{w}+\frac{u(\xi)^2v(\xi)^2}{3w^3}.
\]
\end{lemma}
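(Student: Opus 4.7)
The plan is to characterize the branch points of $s_1(w;\xi) = z$ via the critical points of $s_1(\cdot;\xi)$, compute their images under $s_1$, and then verify positivity. By definition, a branch point is a value $z$ for which the equation $s_1(w;\xi) = z$ has a repeated root in $w$, equivalently $z = s_1(w_0;\xi)$ for some $w_0$ with $s_1'(w_0;\xi) = 0$. First I would compute $s_1'(w;\xi) = 1 - b(\xi)/w^2 - 3c(\xi)/w^4$; multiplying by $w^4$, the critical equation becomes the biquadratic $w^4 - bw^2 - 3c = 0$. Setting $y = w^2$, the quadratic $y^2 - by - 3c = 0$ has roots of product $-3c < 0$ (since $c = \tau^2 a(\xi)^3 > 0$), so one root is positive and the other negative. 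These are precisely $u^2$ and $-v^2$ from \eqref{eq: u}--\eqref{eq: v}, so the four critical points of $s_1(\cdot;\xi)$ are $\pm u$ and $\pm iv$.

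From Vieta's relations applied to $y^2 - by - 3c = 0$ one reads off
\[ u^2 - v^2 = b, \qquad u^2 v^2 = 3c, \]
and substituting these identities directly into the Laurent expression for $s_1$ yields the reformulation claimed in the lemma. Evaluating the reformulated $s_1$ at $w = u$ gives $s_1(u;\xi) = u + (u^2-v^2)/u + u^2v^2/(3u^3) = 2u - 2v^2/(3u)$; similarly, using $1/(iv) = -i/v$ and $1/(iv)^3 = i/v^3$, one finds $s_1(iv;\xi) = i(2v - 2u^2/(3v))$, which we identify with $-i\gamma(\xi)$. These elementary computations establish the formulas for $\alpha(\xi)$ and $\gamma(\xi)$.

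For positivity of $\alpha$, I would rewrite $\alpha = 2(3u^2 - v^2)/(3u) = 2(2u^2 + b)/(3u)$, which is strictly positive since $u > 0$ and $b = \tau^2 a(\xi) + \xi > 0$. The positivity $\gamma > 0$ is the main obstacle: the identity $\gamma = 2(u^2 - 3v^2)/(3v)$ reduces the claim to $u^2 > 3v^2$, equivalently $b^2 > 4c$. Using the relation $\xi = 3a^2 + a(t-\tau^2)$ (a direct rearrangement of the defining equation \eqref{eq: a} for $a(\xi)$), one may rewrite $b = a(3a+t)$ and hence $b^2 - 4c = a^2[(3a+t)^2 - 4\tau^2 a]$. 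To show this is strictly positive for $\xi > \xicr$, I would do a case analysis on the sign of $\tau^2 - 3t$, which controls whether the quadratic $h(a) := (3a+t)^2 - 4\tau^2 a$ in $a$ has real roots. The key inputs are the monotonicity of $a(\xi)$ in $\xi$ and the boundary computation $a(\xicr) = (\tau^2-t)/2$ for $t < \tau^2$, at which $b^2 - 4c$ evaluates to $a(\xicr)^2 (\tau^2+t)^2/4 \geq 0$; combined with the fact that the minimum of $h$ lies at $a^* = (2\tau^2 - 3t)/9 < a(\xicr)$ so that $h$ is increasing on $[a(\xicr), \infty)$, this yields $b^2 - 4c > 0$ throughout the one-cut region. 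The case $t \geq \tau^2$ (where $\xicr = 0$) is handled separately using $h(0) = t^2 \geq 0$ together with $t \geq \tau^2/3$, which forces $h \geq 0$ on all of $[0,\infty)$.
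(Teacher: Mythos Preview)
Your proposal is correct and follows essentially the same approach as the paper: identify the branch points as the images of the critical points of $s_1(\cdot;\xi)$, use Vieta on the biquadratic for the critical points to rewrite the symbol, and then establish positivity via the inequality $u(\xi)^2 - 3v(\xi)^2 > 0$ (equivalently $b(\xi)^2 > 4c(\xi)$). The paper's proof is extremely terse---it merely asserts that $3u^2 - v^2 > u^2 - 3v^2 > 0$ follows from \eqref{eq: u}--\eqref{eq: v} when $\xi > \xicr$---whereas you supply an explicit verification of $b^2 > 4c$ via the reparametrisation in terms of $a(\xi)$ and a monotonicity argument for the auxiliary quadratic $h(a) = (3a+t)^2 - 4\tau^2 a$; this is more detailed but not a different route.
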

\begin{proof}
The proof is straightforward. Note that $\pm u(\xi)$ and $\pm i v(\xi)$
are the zeros of the derivative of $s_1(w;\xi)$ with respect to
$w$. From (\ref{eq: u}) and (\ref{eq: v}) it can be shown that
$3u(\xi)^2-v(\xi)^2>u(\xi)^2-3v(\xi)^2>0$ if $\xi > \xicr$. The positivity of $\alpha(\xi)$ and $\gamma(\xi)$ follows from these inequalities.
\end{proof}

\subsubsection{The restriction of $s_1$ to $\R$ and $i\R$}

Consider the algebraic equation
\begin{equation} \label{eq: alg eq s1 R}
s_1(x;\xi)=x+\frac{u(\xi)^2-v(\xi)^2}{x}+\frac{u(\xi)^2v(\xi)^2}{3x^3}=z,
\end{equation}
for real values of $z$. Figure \ref{fig: s1R} shows a sketch of
the graph of the symbol $s_1(x; \xi)$ for real values of $x$.
The solutions to \eqref{eq: alg eq s1 R} for $z \in \mathbb R$ are real or
come in pairs of complex conjugate numbers. By Lemma \ref{lem: branch points1},
$\pm u(\xi)$ is a double
solution of the equation if $z=\pm \alpha(\xi)$. Then, as is clear
from the graph in Figure \ref{fig: s1R}, the other two
solutions are complex conjugate and their modulus is smaller than
$u(\xi)$.

Now consider the restriction of the symbol $s_1$ to the imaginary axis
\[
s_1(iy;\xi)=i\left(y-\frac{u(\xi)^2-v(\xi)^2}{y}+\frac{u(\xi)^2v(\xi)^2}{3y^3}\right).
\]
We claim that $s_1$ has four purely imaginary zeros: $\pm iy_1,
\pm iy_2$, where $y_1 > y_2 > 0$. To see this, recall that
$u(\xi)^2-3v(\xi)^2>0$, so that $(u(\xi)-\sqrt 3 v(\xi))(u(\xi)+\sqrt 3 v(\xi)/3)> 0$. This can
be rewritten as
\begin{equation} \label{eq: u^2-v^2}
u(\xi)^2-v(\xi)^2> \frac{2u(\xi)v(\xi)}{\sqrt 3}.
\end{equation}
Now consider the biquadratic equation
\[
y^4-(u(\xi)^2-v(\xi)^2)y^2+\frac{u(\xi)^2v(\xi)^2}{3}=0.
\]
By (\ref{eq: u^2-v^2}) the discriminant is positive and less than
$(u(\xi)^2-v(\xi)^2)^2 >0$. Then the claim follows. Figure \ref{fig: s1iR}
shows the graph of the restriction of $s_1$ to the imaginary axis.

Consider the algebraic equation
\begin{equation}
y-\frac{u(\xi)^2-v(\xi)^2}{y}+\frac{u(\xi)^2v(\xi)^2}{3y^3}=z,
\label{eq: s1iR}
\end{equation}
for $z \in \R$. For $z=\pm \gamma(\xi)$ the equation has four real
solutions: the double solution $\mp v(\xi)$ and two strictly
positive/negative solutions. The latter are simple and $v(\xi)$ lies
between their moduli.

\begin{figure}[t]
\centering
\subfigure[Graph of $x \mapsto s_1(x;\xi)$.]{
\begin{tikzpicture}[scale=0.48]
\clip (-5,-5)rectangle (6,6.5);
\draw[->] (-5,0) -- (5,0) node[font=\footnotesize,right]{$x$};
\draw[->] (0,-5)-- (0,5);
\draw[thick] (0.1,4.8) .. controls (0.2,1) and (1,1.1) .. (4.8,4.9);
\draw[thick] (-0.1,-4.8) .. controls (-0.2,-1) and (-1,-1.1) .. (-4.8,-4.9);
\draw[help lines] (1,0) -- (1,2);
\draw (1,0.1)--(1,-0.1) node[font=\footnotesize,below]{$u(\xi)$};
\draw[help lines] (0,2)--(1,2);
\draw (0.1,2)--(-0.1,2) node[font=\footnotesize,left]{$\alpha(\xi)$};
\draw[very thick] (0.05,2)--(0.05,-2) node[font=\footnotesize,near start,right,fill=white]{$\Gamma_1(\xi)$};
\draw[very thick,dotted] (-0.05,5)--(-0.05,-5) node[font=\footnotesize,very near start, left]{$\Gamma_3(\xi)$};
\draw (1,2) circle (1pt);
\draw[help lines] (-1,0) -- (-1,-2);
\draw (-1,-0.1)--(-1,0.1) node[font=\footnotesize,above]{$-u(\xi)$};
\draw[help lines] (0,-2)--(-1,-2);
\draw (-0.1,-2)--(0.1,-2) node[font=\footnotesize,right]{$-\alpha(\xi)$};
\filldraw (-1,-2) circle (1pt);
\end{tikzpicture}
\label{fig: s1R}
} \qquad
\subfigure[Graph of $y \mapsto -i s_1(iy;\xi)$.]{
\begin{tikzpicture}[scale=0.48]
\clip (-5,-5)rectangle (6,6.5);
\draw[->] (-5,0) -- (5,0) node[font=\footnotesize,right]{$y$};
\draw[->] (0,-5)-- (0,5);
\draw[thick] (0.1,4.8) .. controls (2,-10) and (3,2.9) .. (4.9,4.8);
\draw[thick] (-0.1,-4.8) .. controls (-2,+10) and (-3,-2.9) .. (-4.9,-4.8);
\draw[help lines] (1.87,-2.23)--(1.87,0);
\draw[help lines] (1.87,-2.23)--(0,-2.23);
\filldraw (1.87,-2.23) circle (1pt);
\draw (1.87,-0.1)--(1.87,0.1) node[font=\footnotesize,above]{$v(\xi)$};
\draw (0.1,-2.23)--(-0.1,-2.23)node[font=\footnotesize,left,fill=white]{$-\gamma(\xi)$};
\draw[very thick, dashed] (0,2.23)--(0,5) node[font=\footnotesize,midway,left]{$-i \Gamma_2(\xi)$} (0,-2.23)--(0,-5);
\draw[help lines] (-1.87,2.23)--(-1.87,0);
\draw[help lines] (-1.87,2.23)--(0,2.23);
\filldraw (-1.87,2.23) circle (1pt);
\draw (-1.87,0.1)--(-1.87,-0.1) node[font=\footnotesize,below]{$-v(\xi)$};
\draw (-0.1,2.23)--(0.1,2.23)node[font=\footnotesize,right,fill=white]{$\gamma(\xi)$};
\end{tikzpicture}
\label{fig: s1iR} } \label{fig: s1R and s1iR} \caption{Graphs
of $x \mapsto s_1(x;\xi)$ and $y \mapsto -i s_1(iy;\xi)$, the restrictions
of the symbol $s_1$ to the real and imaginary axes.}
\end{figure}
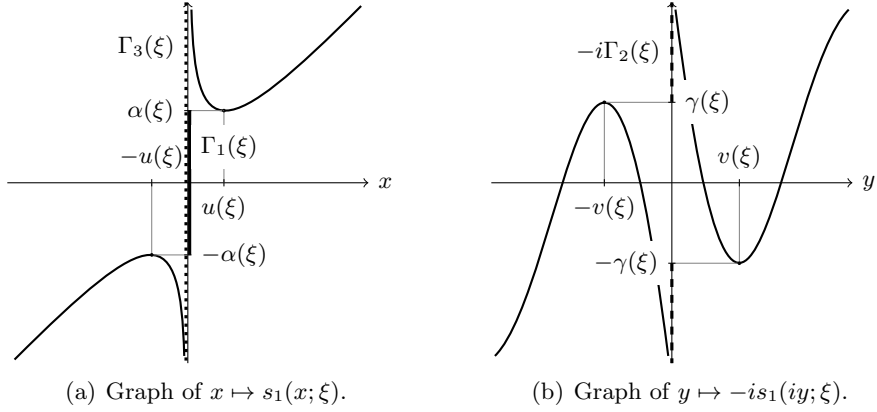

\subsubsection{Auxiliary lemmas}
We start by proving two lemmas.

\begin{lemma}\label{lemma: axes 1cc}
Let $\xi > \xicr$.  Assume that $w_a,w_b \in \mathbb C$ are
such that $w_a \neq w_b$, $|w_a|=|w_b|$, and
$s_1(w_a;\xi)=s_1(w_b;\xi)=z$. Then $z \in \mathbb R \cup i
\mathbb R$.
\end{lemma}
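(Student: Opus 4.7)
The plan is to exploit the symmetries of the symbol $s_1(w;\xi) = w + b(\xi)/w + c(\xi)/w^3$: it is odd and has real coefficients, so $s_1(\overline{w}) = \overline{s_1(w)}$ and $s_1(-\overline{w}) = -\overline{s_1(w)}$. Consequently, if we can show that $w_b = \overline{w_a}$ then $z = \overline{z}$ and $z \in \R$, while $w_b = -\overline{w_a}$ forces $z = -\overline{z}$ and hence $z \in i\R$. Writing $w_a = \rho e^{i\alpha}$ and $w_b = \rho e^{i\beta}$, and setting $s = \alpha + \beta$ and $d = \alpha - \beta$, these two favorable alternatives are precisely $s \equiv 0 \pmod{\pi}$, i.e.\ $\sin s = 0$.

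To isolate this case, I would use that $w_a, w_b$ are two roots of the quartic $P(w) = w^4 - z w^3 + b w^2 + c$ (with $b = b(\xi) > 0$, $c = c(\xi) > 0$ by \eqref{eq: b}--\eqref{eq: c}). Let $w_c, w_d$ be the other two roots. The vanishing of the coefficient of $w$ in $P$ combined with $w_a w_b w_c w_d = c$ gives $w_c + w_d = -c(w_a + w_b)/(w_a w_b)^2$, and plugging this into $b = \sum_{i<j} w_i w_j$ together with $w_a w_b = \rho^2 e^{is}$ and $w_a + w_b = 2\rho \cos(d/2) e^{is/2}$ produces
\[
b = \rho^2 e^{is} - \frac{c}{\rho^2}(2\cos d + 1) e^{-is}.
\]
Since the left-hand side is real, the imaginary part vanishes, giving the dichotomy
\[
\sin s \cdot \bigl[\rho^4 + c(2\cos d + 1)\bigr] = 0.
\]
If $\sin s = 0$ we are done. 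Otherwise $\rho^4 = -c(2\cos d + 1)$, which with $c > 0$ forces $\rho^4 \le c$, and the real part of the identity then reduces to $b = 2\rho^2 \cos s$, whence $\rho^2 \ge b/2$. Chaining these two bounds yields $b^2 \le 4c$.

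The main obstacle is therefore to show that $b(\xi)^2 > 4c(\xi)$ strictly whenever $\xi > \xicr$, thereby ruling out the alternative. Using $b = a(3a+t)$ and $c = \tau^2 a^3$ from Theorem \ref{th:asympt recurrence coefficients}, this reduces to $g(a(\xi)) > 0$ where $g(a) = (3a+t)^2 - 4\tau^2 a$. The discriminant of $g$ as a quadratic in $a$ is $16\tau^2(\tau^2 - 3t)$, so when $\tau^2 < 3t$ (which includes $t \ge \tau^2$, where $\xicr = 0$), $g$ is everywhere positive. Otherwise the larger root of $g$ is $a_+ = [2\tau^2 - 3t + 2\tau\sqrt{\tau^2-3t}]/9$, and a direct squaring argument shows $a_+ \le (\tau^2 - t)/2 = a(\xicr)$, the inequality being equivalent to $0 \le 9(\tau^2+t)^2$, with equality only at the exceptional value $t = -\tau^2$. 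Since $a(\xi)$ is strictly increasing in $\xi$, for $\xi > \xicr$ we have $a(\xi) > a(\xicr) \ge a_+$, so $g(a(\xi)) > 0$ as required. The delicate case is $t = -\tau^2$, where $g(a(\xicr)) = 0$; strict positivity for $\xi > \xicr$ then follows from a direct check that $g'(a(\xicr)) > 0$ combined with the strict monotonicity of $a(\xi)$.
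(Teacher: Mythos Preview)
Your proof is correct but takes a genuinely different route from the paper's.

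The paper first establishes (just before this lemma) that $s_1(\cdot;\xi)$ has four purely imaginary zeros $\pm iy_1,\pm iy_2$; this relies on the inequality $u(\xi)^2-v(\xi)^2>2u(\xi)v(\xi)/\sqrt{3}$, see \eqref{eq: u^2-v^2}. With that factorization in hand, $|s_1(w;\xi)|$ on the circle $|w|=\rho$ becomes a product of distances from $w^2$ to the negative reals $-y_1^2,-y_2^2$, which is an even function of $\arg(w^2)$ and strictly decreasing on $[0,\pi]$. Equality of moduli then forces $w_a^2=\overline{w_b^2}$ or $w_a^2=w_b^2$, i.e.\ $w_b\in\{-w_a,\pm\overline{w_a}\}$, and the odd/real symmetries of $s_1$ finish.

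You bypass the zero analysis entirely and work with Vieta's formulas for the quartic $w^4-zw^3+b w^2+c$, obtaining the clean dichotomy $\sin s=0$ versus $b^2\le 4c$. It is worth noting that your decisive inequality $b(\xi)^2>4c(\xi)$ is, via $b=u^2-v^2$ and $c=u^2v^2/3$ from Lemma~\ref{lem: branch points1}, \emph{exactly} the paper's inequality \eqref{eq: u^2-v^2}. So both arguments rest on the same algebraic fact; the paper deduces it from $u^2>3v^2$ (which it needs anyway for the branch-point analysis), while you verify it directly via the quadratic $g(a)=(3a+t)^2-4\tau^2 a$. Your route is more self-contained; the paper's is more geometric and reuses earlier structure.

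One small simplification: the ``delicate case'' $t=-\tau^2$ needs no separate derivative check. Since $g$ is an upward parabola with larger root $a_+$, the inequality $a(\xi)>a(\xicr)\ge a_+$ for $\xi>\xicr$ already gives $g(a(\xi))>0$ uniformly.
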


\begin{proof}
The complex numbers $w_a^2$ and $w_b^2$ lie on a circle of radius
$\rho=|w_a^2|=|w_b^2|$ centered at the origin of the complex
plane. We can factorize $s_1(w;\xi)$ as
\[
s_1(w;\xi)=\frac{(w^2+y_1^2)(w^2+y_2^2)}{w^3},
\]
where $\pm i y_1,\pm i y_2$ are the zeros of the symbol $s_1(w;\xi)$. Thus
\[
|s_1(w,\xi)|=\frac{\dist(w^2,-y_1^2)\dist(w^2,-y_2^2)}{\rho^{3/2}}, \qquad \textrm{if } |w^2|=\rho.
\]
Since $-y_1^2,-y_2^2<0$, it follows that
\[
[-\pi,\pi]\to \mathbb R: \theta \mapsto \frac{\dist(\rho e^{i\theta},-y_1^2)\dist(\rho e^{i\theta},-y_2^2)}{\rho^{3/2}}
\]
is an even function that is strictly decreasing as $\theta$ increases from $0$ to $\pi$.
Thus, equality
\[
   \dist(\rho e^{i\theta_a},-y_1^2)\dist(\rho e^{i\theta_a},-y_2^2)=\dist(\rho e^{i\theta_b},-y_1^2)\dist(\rho e^{i\theta_b},-y_2^2),
\]
with $\theta_a,\theta_b \in [-\pi,\pi]$, can only occur if
$\theta_b=\pm \theta_a$. Then it follows from the assumptions of
the lemma that $w_a^2=w_b^2$ or $w_a^2=\overline{w_b^2}$. This
gives rise to three possible cases: $w_a=-w_b$ and $w_a=\pm
\overline w_b$. Substituting these results into the algebraic
equation yields
\begin{align*}
z=& s_1(w_a;\xi)=s_1(-w_b;\xi)=-s_1(w_b;\xi)=-z, \\
z=& s_1(w_a;\xi)=s_1(\overline{w_b};\xi)=\overline{s_1(w_b;\xi)}=\overline z, \textrm{ or}\\
z=& s_1(w_a;\xi)=s_1(-\overline{w_b};\xi)=-\overline{s_1(w_b;\xi)}=-\overline z.
\end{align*}
In all three cases $z \in \mathbb R \cup i \mathbb R$.
\end{proof}

\begin{lemma} \label{lemma: gamma 1cc}
Let $\xi  > \xicr$. Then $\Gamma_j(\xi) \subset \R \cup i\R$ for $j=1,2,3$. Moreover,
\[
    \Gamma_1(\xi) \cap \Gamma_2(\xi) = \Gamma_2(\xi) \cap \Gamma_3(\xi)= \emptyset.
\]
\end{lemma}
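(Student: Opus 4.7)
The plan is to handle the two assertions separately. The first is a direct consequence of Lemma~\ref{lemma: axes 1cc} together with the identification of branch points in Lemma~\ref{lem: branch points1}. The second requires a detailed case analysis that combines Vieta's relations for the quartic $s_1(w;\xi)=z$ with the strict inequality $b(\xi)^2 > 4c(\xi)$, which must first be established in the one-cut regime.

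\textbf{Step 1 ($\Gamma_j \subset \R \cup i\R$).} If $z \in \Gamma_j(\xi)$, the two solutions $w_j(z;\xi)$ and $w_{j+1}(z;\xi)$ of $s_1(w;\xi) = z$ have the same modulus. Either they coincide, in which case $z$ is a branch point, and hence by Lemma~\ref{lem: branch points1} lies in $\{\pm\alpha(\xi),\pm i\gamma(\xi)\}\subset \R \cup i\R$; or they are distinct, and Lemma~\ref{lemma: axes 1cc} applies directly to give $z \in \R \cup i\R$.

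\textbf{Step 2 (disjointness).} A point in $\Gamma_1\cap\Gamma_2$ or $\Gamma_2\cap\Gamma_3$ forces three of the four roots of the quartic $w^4 - zw^3 + b(\xi)w^2 + c(\xi) = 0$ to share a common modulus $\rho$. By Step~1 such a $z$ lies in $\R\cup i\R$. I would exploit the root symmetries -- complex-conjugate pairs when $z \in \R$, and pairs $(w_0,-\bar w_0)$ of equal modulus when $z \in i\R$ -- and split according to whether $|z|<\alpha(\xi)$ or $|z|>\alpha(\xi)$ on $\R$, and $|z|<\gamma(\xi)$ or $|z|>\gamma(\xi)$ on $i\R$. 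The graphs in Figures~\ref{fig: s1R} and \ref{fig: s1iR} tell one how many roots lie on the axes in each regime. Feeding the equal-modulus assumption into the Vieta relations (sum $=z$, pair sum $=b$, triple sum $=0$ from the missing $w$-coefficient, product $=c$) then produces an explicit polynomial obstruction in each subcase. The branch points $\pm\alpha(\xi), \pm i\gamma(\xi)$ are treated separately by factoring out the double root and inspecting the residual quadratic.

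\textbf{Main obstacle.} The delicate point is to establish $b(\xi)^2 > 4c(\xi)$ for $\xi > \xicr$ and then to verify that it rules out each of the polynomial obstructions produced in Step~2. Using $b = a(3a+t)$, $c = \tau^2 a^3$ and the identity $\xi = 3a^2 - a(\tau^2-t)$, the inequality reduces to $(3a+t)^2 > 4\tau^2 a$; a short analysis of the quadratic $g(a)=(3a+t)^2 - 4\tau^2 a$ shows that its larger root is at most $a(\xicr) = (\tau^2-t)/2$, and since $\xi \mapsto a(\xi)$ is strictly increasing, $g(a(\xi))>0$ for $\xi > \xicr$. With $b^2 > 4c$ available, the casework collapses as follows: on $\R$ with $|z|<\alpha$, the triple-product relation forces $z=0$ and all four moduli equal, which is equivalent to $b^2 \leq 4c$; on $\R$ with $|z|>\alpha$, eliminating the real part of the complex pair against the real roots yields the identity $bc = -x_1^2(x_1^4+c)$, contradicting $b,c>0$; on $i\R$ with $|z|<\gamma$, the configuration $\{\pm\rho,\pm\rho\}$ forces $3u^4-10u^2v^2+3v^4=0$, incompatible with the strict inequality $u(\xi)^2>3v(\xi)^2$ from Lemma~\ref{lem: branch points1}; on $i\R$ with $|z|>\gamma$, the cubic $u^3+cu-bc=0$ in $u=y_1^2$ has its unique positive root strictly greater than $c^{1/2}$ (again by $b>2c^{1/2}$), whereas reality of the real part of the non-imaginary roots $(w_0,-\bar w_0)$ forces $u\leq c^{1/2}$. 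At each branch point, factoring out the double root and using $u^2>3v^2$ shows that the two remaining roots have modulus distinct from the double modulus, ruling out any third coincidence.
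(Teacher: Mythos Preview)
Your Step~1 is exactly the paper's argument.

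Your Step~2 is correct but takes a much heavier route than the paper. The paper does not touch Vieta's relations or the inequality $b(\xi)^2>4c(\xi)$ at all. Instead it recycles the key observation buried in the proof of Lemma~\ref{lemma: axes 1cc}: on a fixed circle $|w|=\rho$, the map $\theta\mapsto|s_1(\rho e^{i\theta})|$ is even and strictly monotone on $[0,\pi]$ (via the argument of $w^2$), so any two distinct roots of the same modulus mapping to the same $z$ must satisfy $w_a^2=w_b^2$ or $w_a^2=\overline{w_b^2}$. Combined with the parity $s_1(-w)=-s_1(w)$ and the reality $s_1(\bar w)=\overline{s_1(w)}$, this pins down that for $z\in\R\setminus\{0\}$ only $w_0$ and $\bar w_0$ can lie on a given circle and map to $z$, and for $z\in i\R\setminus\{0\}$ only $w_0$ and $-\bar w_0$. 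Three roots of equal modulus then force two of them to coincide, hence $z$ is a branch point; the branch points and $z=0$ are dispatched by the explicit root structure already recorded in Section~5.3.2. The whole disjointness argument is four short cases and no computation.

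What your approach buys is an explicit algebraic obstruction in each regime (and the side fact $b^2>4c$), which could be useful if one needed quantitative control. What the paper's approach buys is brevity and a clear conceptual reason: the symbol's zero pattern forces the ``at most two on a circle'' rigidity, and everything else is forced. If you rewrite Step~2 using this observation you will cut it to a few lines.
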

\begin{proof}
Fix $j \in \{1,2,3\}$. If $z\in \Gamma_j(\xi)$ and $s_1(w,\xi)=z$
has a double solution, then $z$ is one of the branch points $\pm
\alpha(\xi)$ or $\pm i \gamma(\xi)$. So $z \in \R \cup i\R$.
If $z\in \Gamma_j(\xi)$ and $s_1(w,\xi)=z$ does not have a double
solution, then $w_j(\xi) \neq w_{j+1}(\xi)$ and $|w_j(\xi)| =
|w_{j+1}(\xi)|$. Then it follows from Lemma \ref{lemma: axes 1cc}
that $z \in \R \cup i\R$. This proves that $\Gamma_j(\xi) \subset \R
\cup i\R$.

Now assume that for a certain value of $z \in \R \cup i\R$ and $j
\in \{1,2\}$ $z \in \Gamma_j(\xi) \cap \Gamma_{j+1}(\xi)$. Then
$|w_j(z;\xi)|=|w_{j+1}(z;\xi)|=|w_{j+2}(z;\xi)|$. We distinguish
four cases. First consider  the case $z=0$. Recall that the
algebraic equation $s_1(w,\xi)=0$ has four different imaginary
solutions: $\pm iy_1$ and $\pm i y_2$. This contradicts the
assumption. $z$ cannot be one of the branch points $\{\pm
\alpha(\xi), \pm i \gamma(\xi) \}$ either. Next assume that $z \in
\mathbb R \setminus \{0,\pm \alpha(\xi)\}$. From the proof of Lemma
\ref{lemma: axes 1cc} it follows that
$w_j(z;\xi)=\overline{w_{j+1}(z;\xi)}=w_{j+2}(z;\xi)$, so that two
roots coincide and $z$ is a branch point. This possibility was
already excluded. If $z \in i\mathbb R \setminus \{0,\pm
i\gamma(\xi)\}$ we analogously obtain
$w_j(z;\xi)=-\overline{w_{j+1}(z;\xi)}=w_{j+2}(z;\xi)$. Then again
$z$ is a branch point. Since we could exclude all four possible
cases, we can conclude that $\Gamma_1(\xi) \cap
\Gamma_2(\xi)=\Gamma_2(\xi) \cap \Gamma_3(\xi)= \emptyset$.
\end{proof}

\subsubsection{Transformed symbol}
Lemma \ref{lemma: gamma 1cc} is used in the proof of Theorem
\ref{th: gamma 1cc}. Another ingredient of that proof is the
transformed symbol $S_1$, defined as
\begin{equation}
S_1(W;\xi)=s_1(a(\xi) W;\xi)=a(\xi) \left( W+ \frac{3}{W}\right)+\frac{t}{W}+\frac{\tau^2}{W^3},
\label{eq: S1}
\end{equation}
see \eqref{eq: s1}, \eqref{eq: b}, and \eqref{eq: c}. The
advantage of the transformed symbol $S_1$ over the symbol $s_1$ is
that it depends on $\xi$ in a much easier way. This will
significantly simplify the calculations.

We denote the zeros of $\frac{\mathrm d S_1}{\mathrm d W}(W;\xi)$
by $\pm U(\xi)$ and $\pm iV(\xi)$, where $U(\xi),V(\xi)>0$. One
can check that $\pm U(\xi)= \pm u(\xi)/a(\xi)$ and $\pm iV(\xi)=
\pm iv(\xi)/a(\xi)$, where $u(\xi)$ and $v(\xi)$ are defined as in
(\ref{eq: u}) and (\ref{eq: v}). Moreover, $S_1$ gives rise to the
same branch points as $s_1$ does, namely
\begin{align}
S_1(\pm U(\xi);\xi) &=s_1(\pm u(\xi);\xi)=\pm \alpha(\xi)  \textrm{ and} \nonumber \\
S_1(\pm iV(\xi);\xi) &=s_1(\pm iv(\xi);\xi)=\mp i \gamma(\xi). \label{eq: branch points S1}
\end{align}

\subsubsection{Proof of Theorem \ref{th: gamma 1cc}}

\begin{proof}  

We will use the restriction of the symbol to the real axis to
determine the sets $\Gamma_j(\xi) \cap \R$. Figure \ref{fig: s1R}
shows the typical form of the graph of this restriction.

Recall the algebraic equation \eqref{eq: alg eq s1 R}. For $z \in
(-\alpha(\xi),\alpha(\xi))$ we find two pairs of complex conjugate
solutions. Therefore, $(-\alpha(\xi),\alpha(\xi))\subset \Gamma_1(\xi)
\cap \Gamma_3(\xi)$. If $z=\pm \alpha(\xi)$ the equation has the
double solution $\pm u(\xi)$ and one pair of complex conjugate
solutions with smaller modulus. We conclude that
$[-\alpha(\xi),\alpha(\xi)]\subset \Gamma_1(\xi) \cap \Gamma_3(\xi)$.
Next, take $z> \alpha(\xi)$. The equation then has two real solutions
and one pair of complex conjugate solutions. The largest real
solution is denoted by $x_1(z)$, the smallest real solution by
$x_2(z)$, and the complex conjugate solutions by
$x_3(z)=\overline{x_4(z)}$. From Lemma \ref{lemma: gamma 1cc} it
follows that the equation does not admit three solutions with
equal moduli. Thus, the situation $|x_2(z)|=|x_3(z)|=|x_4(z)|$
cannot occur if $z>\alpha(\xi)$. Because
$|x_2(\alpha(\xi))|>|x_3(\alpha(\xi))|=|x_4(\alpha(\xi))|$ and the roots of
a polynomial equation are continuous with respect to the
coefficients of the equation, we have that $(\alpha(\xi),+\infty)
\subset \Gamma_3(\xi)$ and $(\alpha(\xi),+\infty) \cap
\Gamma_1(\xi)=(\alpha(\xi),+\infty) \cap
\Gamma_2(\xi)=\emptyset$. We obtain similar results if $z \in
(-\infty,-\alpha(\xi))$. At this moment we conclude
\begin{equation} \label{eq: gamma 1cc 1}
\Gamma_1(\xi) \cap \R=[-\alpha(\xi),\alpha(\xi)],\qquad \Gamma_2(\xi)
\cap \R = \emptyset, \quad \textrm{and } \R \subset \Gamma_3(\xi).
\end{equation}

Let us now restrict the symbol $s_1$ to the imaginary axis to
determine the sets $\Gamma_j(\xi) \cap i\R$. Figure \ref{fig:
s1iR} shows a typical graph of $y \mapsto -is_1(iy;\xi)$. We proceed in a
similar way. Consider for real values of $z$ the algebraic
equation \eqref{eq: s1iR}. For $z \in (0,\gamma(\xi))$ this equation
has four real solutions with different moduli. Therefore,
$(0,i\gamma(\xi))\cap \Gamma_j(\xi)=\emptyset$ for $j=1,2,3$. For
$z=0$ we obtain the solutions $\pm y_1,\pm y_2$. Since $y_1>y_2$,
0 belongs to $\Gamma_1(\xi)$ and $\Gamma_3(\xi)$, but not to
$\Gamma_2(\xi)$. This is consistent with \eqref{eq: gamma 1cc 1}.
If $z=\gamma(\xi)$, the equation has the double solution $-v(\xi)$, a real
solution with modulus less than $v(\xi)$, and a real solution with
modulus greater than $v(\xi)$. Thus, $i\gamma(\xi)$ belongs to
$\Gamma_2(\xi)$. Next take $z>\gamma(\xi)$. The equation then has two
different real solutions and one pair of complex conjugate
solutions. Using a similar continuity argument as before we obtain
$(i\gamma(\xi),+i\infty) \subset \Gamma_2(\xi)$ and
$(i\gamma(\xi),+i\infty) \cap \Gamma_1(\xi)=(i\gamma(\xi),+i\infty) \cap
\Gamma_3(\xi)=\emptyset$. The same procedure works for $z<0$.
Summarized this is
\begin{equation}\label{eq: gamma 1cc 2}
 \Gamma_1(\xi) \cap i\R=\Gamma_3(\xi) \cap i\R=\{0\} \quad
 \textrm{and} \quad  \Gamma_2(\xi) \cap i\R=i\R \setminus (-i\gamma(\xi),i\gamma(\xi)).
\end{equation}
Combining (\ref{eq: gamma 1cc 1}) and (\ref{eq: gamma 1cc 2}) proves (\ref{eq: gamma 1cc}).

To prove (a) and (b) the transformed symbol $S_1$ will be useful.
First, we prove that $\xi \mapsto \alpha(\xi)$ is an increasing
function. Take $\xi > \xicr$. It follows from \eqref{eq: branch
points S1} that
\begin{align*}
\frac{\textrm d \alpha(\xi)}{\textrm d \xi} &= \frac{\textrm d S_1}{\textrm d \xi}(U(\xi);\xi) \\
 &= \frac{\partial S_1}{\partial W}(U(\xi);\xi)
\frac{\textrm d U(\xi)}{\textrm d \xi}+\frac{\partial S_1}{\partial \xi}(U(\xi);\xi).
\end{align*}
Since $U(\xi)$ is a zero of the derivative of $S_1$, the first
term on the right-hand side vanishes. Taking the partial derivative with respect to $\xi$ in \eqref{eq:
S1} yields
\[
\frac{\textrm d \alpha(\xi)}{\textrm d \xi} = \left( U(\xi)+\frac{3}{U(\xi)} \right)\frac{\textrm d a(\xi)}{\textrm d \xi}.
\]
Observe that the function $\xi \mapsto a(\xi)$ is increasing, see
(\ref{eq: a}). Because $U(\xi)>0$ we conclude that $\xi \mapsto
\alpha(\xi)$ is an increasing function. It can be proved similarly that $\xi \mapsto
\gamma(\xi)$ is an increasing function for $\xi > \xicr$.

Next, we show that $\lim\limits_{\xi \to +\infty}\alpha(\xi)=+\infty$.
From (\ref{eq: a}) it follows that $a(\xi) \sim \sqrt \xi$ as $\xi
\to \infty$. Using (\ref{eq: u}) -- (\ref{eq: alpha gamma}) we
compute
\begin{multline*}
\alpha(\xi) = \frac{2}{3 u(\xi)}(3u(\xi)^2-v(\xi)^2) \\
=\frac{2}{3 u(\xi)} \left(2(\tau^2a(\xi)+\xi)+\sqrt{(\tau^2a(\xi)+\xi)^2+12 \tau^2a(\xi)^3} \right) \sim \sqrt \xi.
\end{multline*}
Therefore, $\xi \mapsto \alpha(\xi)$ is unbounded. The limit
$\lim\limits_{\xi \to +\infty}\gamma(\xi)=+\infty$ can be proved
analogously.

Our final task is to calculate the limits of $\alpha(\xi)$ and
$\gamma(\xi)$ as $\xi \to 0+$ for $t \geq \tau^2$. In the limit
$\xi=0$ the transformed symbol is
\begin{equation}
S_1(W;0)=\frac{t}{W}+\frac{\tau^2}{W^3},
\label{eq: S1 00}
\end{equation}
because $\lim\limits_{\xi \to 0+}a(\xi)=0$. Its derivative
\[
\frac{\textrm d S_1}{\textrm d W}(W;0)=-\frac{t}{W^2}-3\frac{\tau^2}{W^4},
\]
has only two zeros, denoted by
\[
\pm i V(0)=\pm i \sqrt{\frac{3 \tau^2}{t}}.
\]
It follows that
\[
\lim_{\xi \to 0 }V(\xi)=V(0)=\sqrt{\frac{3 \tau^2}{t}} \quad \textrm{and} \quad \lim_{\xi \to 0 }U(\xi)=+\infty.
\]
Substituting these results into (\ref{eq: S1 00}) yields
\begin{align*}
\lim_{\xi \to 0}\alpha(\xi)& = \lim_{\xi \to 0}S_1(U(\xi);\xi)=\lim_{W \to \infty}\frac{t}{W}+\frac{\tau^2}{W^3}=0  \textrm{ and} \\
\lim_{\xi \to 0}i\gamma(\xi)& = \lim_{\xi \to 0}S_1(-iV(\xi);\xi)=\frac{t}{-iV(0)}+\frac{\tau^2}{(-iV(0))^3}=iy^*,
\end{align*}
see \eqref{eq: ystar} for $y^*$.
\end{proof}

\section{Asymptotic analysis in two-cut case} \label{sec: 2cc}

\subsection{Doubling the recurrence relation} \label{subsec: iteration 2cc}
In Section \ref{sec: 1cc} we introduced and analyzed
the symbol $s_1(w; \xi)$ in the one-cut case.

Here we want to do something similar for the two-cut case. Note, however,
that the recurrence coefficients $b_{k,n}$ and
$c_{k,n}$ in \eqref{eq: 5trr} do not have limits as $k/n \to \xi \in (0, \xicr)$.
Instead, there is two-periodic limiting behavior given
by \eqref{eq: b0}--\eqref{eq: c1}. This is a fundamental difference with
the one-cut case and, therefore, the construction from the previous section
does not apply to the two-cut case.

We analyze the two-cut case by doubling the
recurrence relation \eqref{eq: 5trr}. This yields a new recurrence
relation in which the coefficients have limits. Indeed, we obtain
\begin{multline}
    x^2p_{k,n}(x) = p_{k+2,n}(x)+A_{k,n}p_{k,n}(x) \\
        +B_{k,n}p_{k-2,n}(x)+C_{k,n}p_{k-4,n}(x)+D_{k,n}p_{k-6,n}(x),
\label{eq: iterated rr}
\end{multline}
where
\begin{align*}
A_{k,n} &= b_{k,n}+b_{k+1,n}, \\
B_{k,n} &= c_{k,n}+c_{k+1,n}+b_{k,n}b_{k-1,n},\\
C_{k,n} &= b_{k,n}c_{k-1,n}+c_{k,n}b_{k-3,n}, \\
D_{k,n} &= c_{k,n}c_{k-3,n}.
\end{align*}
The limits of these coefficients as $k, n \to \infty$ such that $k/n \to \xi \in(0, \xicr)$ exist
and are denoted by
\begin{align}
A(\xi)&=\lim_{k/n \to \xi}A_{k,n}=b_0(\xi)+b_1(\xi), \label{eq: A 2cc} \\
B(\xi)&=\lim_{k/n \to \xi}B_{k,n}=c_0(\xi)+c_1(\xi)+ b_0(\xi)b_1(\xi),  \label{eq: B 2cc}\\
C(\xi)&=\lim_{k/n \to \xi}C_{k,n}=b_0(\xi)c_1(\xi) + c_0(\xi)b_1(\xi), \label{eq: C 2cc}\\
D(\xi)&=\lim_{k/n \to \xi}D_{k,n}=c_0(\xi)c_1(\xi), \label{eq: D 2cc}
\end{align}
see also  \eqref{eq: b0}--\eqref{eq: c1}.

In analogy with \eqref{eq: s1} we define the symbol
\begin{equation}
\widehat s_2 (w;\xi)=w+A(\xi)+\frac{B(\xi)}{w}+\frac{C(\xi)}{w^2}+\frac{D(\xi)}{w^3}.
\label{eq: s2 hat}
\end{equation}
We use the subscript $2$ to remind us that we are in the two-cut
case. The hat refers to the fact that the recurrence relation was
doubled to obtain this symbol. Also the quantities that are
associated with the symbol \eqref{eq: s2 hat} will be equipped
with a hat.
Thus we use $\widehat{w}_j(z; \xi)$ to denote the solutions of $\widehat s_2(w;\xi)=z$
with the usual ordering
\[
    |\widehat w_1(z;\xi)| \geq |\widehat w_2(z;\xi)| \geq |\widehat w_3(z;\xi)| \geq| \widehat w_4(z;\xi)|.
\]
Furthermore, we have $\widehat \Gamma_j(\xi)$ and $\widehat \mu_j^{\xi}$
for $j=1,2,3$.

It is remarkable that $\widehat s_2$ has the factorization
\begin{equation}
\widehat s_2(w; \xi) =\frac{(w+ \xi)^2}{w^3} (w^2- t \tau^2 w+ \tau^4 w+ \tau^4 \xi).
\label{eq: s2 hat factorization}
\end{equation}
This follows from \eqref{eq: A 2cc}--\eqref{eq: D 2cc} and the explicit
expressions for $b_0(\xi)$, $b_1(\xi)$, $c_0(\xi)$ and $c_1(\xi)$ from
Theorem \ref{th:asympt recurrence coefficients}. Note that $w = -\xi$
is always a double zero of \eqref{eq: s2 hat factorization}.
Thus $0$ is always a branch point and in fact
an endpoint of one of the sets $\widehat \Gamma_j(\xi)$, as will follow
from the analysis in the next subsection.

\subsection{Analysis of the symbol $\widehat s_2$ in the two-cut case}

The remainder of this section is devoted to the proof of the following theorem,
which is the two-cut case version of Theorem \ref{th: gamma 1cc}.

\begin{theorem} \label{th: gamma 2cc}
Fix $t< \tau^2$ and $0 < \xi < \xicr$. Then we have that
\[ \widehat \Gamma_1(\xi) \subset \mathbb R^+, \qquad \widehat \Gamma_2(\xi) \subset \R^-,
    \qquad \text{and} \qquad \widehat \Gamma_3(\xi) \subset \R^+. \]
More precisely, there exist $\widehat \alpha(\xi) > \widehat \beta(\xi) \geq 0$,
$\widehat \gamma(\xi) \leq 0$, and $\widehat \delta(\xi) \geq 0$  such that
\begin{equation}
\begin{aligned}
\widehat \Gamma_1(\xi) & = [\widehat \beta(\xi), \widehat \alpha(\xi)],\\
\widehat \Gamma_2(\xi) & = (-\infty, \widehat\gamma(\xi)], \\
\widehat \Gamma_3(\xi) & = [\widehat \delta(\xi), +\infty).
 \end{aligned} \label{eq: hatGamma 2cc}
\end{equation}
In addition, we have for every fixed $t < \tau^2$
\begin{itemize}
\item[\rm (a)] $\xi \mapsto \widehat \alpha(\xi)$ is strictly increasing for $0 < \xi < \xicr$ with
\[ \lim_{\xi \to 0 + } \widehat \alpha(\xi) = \tau^2 (\tau^2-t),
    \quad \textrm{and }
    \lim_{\xi \to \xicr-} \widehat \alpha(\xi) = \lim_{\xi \to \xicr+} \alpha(\xi)^2; \]
\item[\rm (b)] $\widehat \beta(\xi) = 0$ if and only if $t < - \tau^2$ and $- t \tau^2 \leq \xi < \xicr$.
    Otherwise
     $\xi \mapsto \widehat \beta(\xi)$ is positive and strictly decreasing with
\begin{align*}
    \lim_{\xi \to 0+} \widehat \beta(\xi)  & = \tau^2 (\tau^2-t),  \\
    \lim_{\xi \to \xicr-} \widehat \beta(\xi) & = 0,  && \text{for } -\tau^2 \leq t < \tau^2, \\
    \lim_{\xi \to -t\tau^2-} \widehat \beta(\xi) & = 0, && \text{for } t \leq -\tau^2;
\end{align*}
\item[\rm (c)]
    $\widehat \gamma(\xi)=0$ if and only if $t < 0$ and $ \xi \leq - t \tau^2$. Otherwise
    $\xi \mapsto \widehat \gamma(\xi)$ is negative and strictly decreasing with
    \begin{align*}
    \lim_{\xi \to 0+} \widehat \gamma(\xi) & = -\frac{4 t^3}{27 \tau^2} = -(y^*)^2,
     && \text{for } 0<t<\tau^2, \\
    \lim_{\xi \to -t\tau^2} \widehat \gamma(\xi) & = 0, && \text{for }  t < 0,  \\
    \lim_{\xi \to \xicr-} \widehat \gamma(\xi) & = - \lim_{\xi \to \xicr+} \gamma(\xi)^2;
    \end{align*}
\item[\rm (d)]
    $\widehat \delta(\xi) =0$ if and only if $-\tau^2 < t < \tau^2$ and $-t \tau^2 \leq \xi < \xicr$.
    Otherwise $\xi \mapsto \widehat \delta(\xi)$ is positive and strictly decreasing with
    \begin{align*}
    \lim_{\xi \to 0+} \widehat \delta(\xi) & = \frac{4 (-t)^3}{27} = (x^*)^2, && \text{for } t < 0, \\
    \lim_{\xi \to \xicr-} \widehat \delta(\xi) & = 0, && \text{for } t \leq -\tau^2, \\
    \lim_{\xi \to -t \tau^2-} \widehat \delta(\xi) & = 0, && \text{for } -\tau^2 \leq t<0.
    \end{align*}
\end{itemize}
\end{theorem}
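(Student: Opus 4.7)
\emph{Plan.} The proof follows the template of Theorem~\ref{th: gamma 1cc} with $\widehat s_2$ replacing $s_1$ and with the factorization~\eqref{eq: s2 hat factorization} as the central tool. From~\eqref{eq: s2 hat factorization} the zeros of $\widehat s_2(\cdot;\xi)$ are the double root $w=-\xi$ together with the two roots $w_\pm$ of $w^2+\tau^2(\tau^2-t)w+\tau^4\xi$. For $t<\tau^2$ and $0<\xi<\xicr$ this quadratic has positive discriminant $\tau^4[(\tau^2-t)^2-4\xi]$, negative sum $-\tau^2(\tau^2-t)$, and positive product $\tau^4\xi$, so both $w_\pm$ are real and strictly negative. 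Thus \emph{all four zeros of $\widehat s_2$ lie on $(-\infty,0)$}. Writing $P(w)$ for the numerator $(w+\xi)^2(w^2+\tau^2(\tau^2-t)w+\tau^4\xi)$, the critical-point equation $wP'(w)=3P(w)$ factors off $(w+\xi)$ (giving the always-present branch point $z=0$) and reduces to the cubic
\[
    w^3-\xi w^2-\xi\tau^2(3\tau^2-2t)w-3\xi^2\tau^4=0,
\]
whose Vieta data (sum $\xi>0$, sum of products of pairs $-\xi\tau^2(3\tau^2-2t)<0$ for $t<\tau^2$, product $3\xi^2\tau^4>0$) is compatible only with the sign pattern ``one positive and two negative real roots''. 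Together with $w=-\xi$ this produces the four branch points.

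\emph{Reduction to $\widehat\Gamma_j(\xi)\subset\mathbb R$.} I would follow Lemma~\ref{lemma: axes 1cc} verbatim. Writing $\widehat s_2(w)=w^{-3}\prod_k(w-z_k)$ with $z_k<0$, each factor $|w-z_k|^2=\rho^2+z_k^2-2\rho z_k\cos\theta$ for $w=\rho e^{i\theta}$ is strictly increasing in $|\theta|\in[0,\pi]$, and hence so is $|\widehat s_2(\rho e^{i\theta})|$. Equality of moduli at distinct points on the same circle therefore forces $w_b=\bar w_a$, and then $\widehat s_2(\bar w_a)=\overline{\widehat s_2(w_a)}$ gives $z\in\mathbb R$. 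A case analysis analogous to Lemma~\ref{lemma: gamma 1cc} excludes triple-modulus coincidences off the branch locus, so consecutive $\widehat\Gamma_j$'s meet at most at branch points.

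\emph{Identifying the arcs.} The three sets $\widehat\Gamma_j(\xi)$ are then pinpointed by graphing $w\mapsto\widehat s_2(w;\xi)$ separately on $(0,+\infty)$ and $(-\infty,0)$. On $(0,+\infty)$ the numerator $P(w)$ is strictly positive, so $\widehat s_2>0$ with $\widehat s_2(0^+)=\widehat s_2(+\infty)=+\infty$ and the unique positive critical point supplies one of the positive-real branch values. On $(-\infty,0)$, the asymptote $\widehat s_2(w)\sim w$ as $w\to-\infty$, the pole $\widehat s_2(0^-)=-\infty$, the forced double zero at $w=-\xi$, and the two simple zeros $w_\pm$ of the quadratic cut the negative half-line into intervals on which $\widehat s_2$ is alternately positive and negative; the two negative critical points from the cubic sit in two of these intervals and supply the remaining branch values. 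Counting real solutions of $\widehat s_2(w)=z$ on each subinterval and comparing moduli via the ordering $|\widehat w_1|\geq\cdots\geq|\widehat w_4|$ exactly as in Section~\ref{subsec: 1cc} assigns each branch value to its role as $\widehat\alpha,\widehat\beta,\widehat\gamma$, or $\widehat\delta$, thereby establishing~\eqref{eq: hatGamma 2cc}.

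\emph{Monotonicity, limits, and main obstacle.} For each endpoint $X(\xi)\in\{\widehat\alpha,\widehat\beta,\widehat\gamma,\widehat\delta\}$ the envelope identity at the corresponding critical point $w_X(\xi)$ gives
\[
    X'(\xi)=\frac{\partial\widehat s_2}{\partial\xi}(w_X(\xi);\xi)=A'(\xi)+\frac{B'(\xi)}{w_X(\xi)}+\frac{C'(\xi)}{w_X^2(\xi)}+\frac{D'(\xi)}{w_X^3(\xi)},
\]
and the sign of $X'(\xi)$ follows from \eqref{eq: A 2cc}--\eqref{eq: D 2cc} together with \eqref{eq: a0}--\eqref{eq: c1} and the known sign of $w_X$. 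Boundary limits are computed by substituting $\xi=0^+$, $\xi=\xicr^-$, or $\xi=-t\tau^2$ in~\eqref{eq: s2 hat factorization}: at $\xi=-t\tau^2$ (which requires $t<0$) one checks directly that $(t\tau^2)^2+\tau^2(\tau^2-t)(t\tau^2)+\tau^4(-t\tau^2)=0$, so the quadratic acquires $w=-\xi$ as a root, the numerator gains a triple zero there, and two branch points coalesce at $z=0$. The main obstacle is the bookkeeping: in each of the sign regimes $t\leq-\tau^2$, $-\tau^2<t<0$, and $0\leq t<\tau^2$ one has to identify which critical point of $\widehat s_2$ produces which of $\widehat\alpha,\widehat\beta,\widehat\gamma,\widehat\delta$ and track how these roles swap as $\xi$ crosses the critical curves of Figure~\ref{fig: tau t phase diagram}. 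This case-by-case assignment, rather than any single estimate, is where the genuine work of the theorem sits.
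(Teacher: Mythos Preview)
Your outline is correct and follows essentially the same architecture as the paper: reduce to $\widehat\Gamma_j\subset\mathbb R$ via the monotonicity of $|\widehat s_2(\rho e^{i\theta})|$ on the circle (the paper's Lemma~\ref{lemma: axes 2cc}), exclude triple coincidences (Lemma~\ref{lemma: gamma 2cc}), and then read off the intervals from the graph of $\widehat s_2$ on the two real half-lines. Your cubic for the non-trivial critical points is exactly the one that falls out of the factorization; one small gap is that Vieta's relations alone do not exclude a complex-conjugate pair, but this is immediately repaired by a Rolle count (four real zeros on $(-\infty,0)$ together with the shape on $(0,\infty)$ force all four critical points to be real).

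The one place where the paper does something genuinely slicker is the monotonicity step. You propose to compute $X'(\xi)=\partial_\xi\widehat s_2(w_X(\xi);\xi)$ by differentiating $A,B,C,D$ through \eqref{eq: A 2cc}--\eqref{eq: D 2cc} and \eqref{eq: a0}--\eqref{eq: c1}; that is correct in principle but the resulting expression does not have a transparent sign. The paper instead introduces the rescaled symbol $\widehat S_2(W;\xi)=\widehat s_2(\xi W;\xi)$, see \eqref{eq: S2}, in which the $\xi$-dependence is \emph{linear}, and the envelope identity collapses to the one-line formula \eqref{eq: key},
\[
X'(\xi)=\frac{(W_j^*(\xi)+1)^2}{W_j^*(\xi)}=\frac{(w_j^*(\xi)+\xi)^2}{\xi\,w_j^*(\xi)},
\]
whose sign is determined solely by $\operatorname{sgn}w_j^*(\xi)$ and which vanishes precisely at $w_j^*=-\xi$. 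This single formula dispatches the strict monotonicity of $\widehat\alpha$ (positive critical point) and of the two nonzero negative critical values simultaneously, and also explains cleanly why the branch value attached to $w^*=-\xi$ stays at $0$. Your route would eventually arrive at the same conclusion (indeed, using your cubic one can algebraically reduce $\partial_\xi\widehat s_2(w_j^*;\xi)$ to the same rational function), but the transformed symbol short-circuits all of that bookkeeping. Similarly, for the $\xi\to\xicr^-$ limits the paper uses the identity $\widehat s_2(w^2;\xicr)=s_1(w;\xicr)^2$, see \eqref{eq: s1 s2}, which matches the two-cut and one-cut endpoints in one stroke; this is worth incorporating in place of an ad~hoc limit computation.
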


\begin{figure}[t]
\centering
\begin{tikzpicture}
\draw[->] (-4,0)--(4,0) node[right]{$t$};
\draw[->] (0,-0.1)node[below]{0}--(0,8) node[above]{$\xi$};
\draw (-3.5,7.8) node[right]{$\xi=\frac{(\tau^2-t)^2}{4}$};
\clip (-7,-1) rectangle (7,8);
\draw[very thick] (2,0) parabola (-4,9);
\draw[very thick] (0,0)--(-4,8);
\draw[dashed] (-4,7.8) node[left]{$\xi=-t \tau^2$};
\draw(0.2,0.4) node[draw,fill=white]{$C_{2c}$} ;
\draw(-3,3) node[draw,fill=white]{$C_{2b}$} ;
\draw(-3.5,6) node[draw,fill=white](nummer I){$C_{2a}$};
\draw (-3.7,7.9) node (gebied I){};
\draw [->] (nummer I) to [out=120,in=240] (gebied I);
\draw[help lines] (-2,0)--(-2,4)--(0,4);
\filldraw (-2,4) circle (1pt);
\draw (-0.1,4)--(0.1,4) node[right]{$\tau^4$};
\draw (2,0.1)--(2,-0.1) node[below]{$\tau^2$};
\draw (-2,0.1)--(-2,-0.1) node[below]{$-\tau^2$};
\draw (-0.1,1)-- (0.1,1) node[right] {$\frac{\tau^4}{4}$};
\draw (0,5) node[fill=white] {\large{$C_1$}};
\end{tikzpicture}
\caption{Regions and subregions of the $t\xi$-phase diagram.
The critical semi-parabola separates the one-cut case
region $C_1$ from the two-cut case region $C_2$. The critical
ray divides $C_2$ into the subregions $C_{2a}$, $C_{2b}$ and $C_{2c}$.}
\label{fig: subregions}
\end{figure}
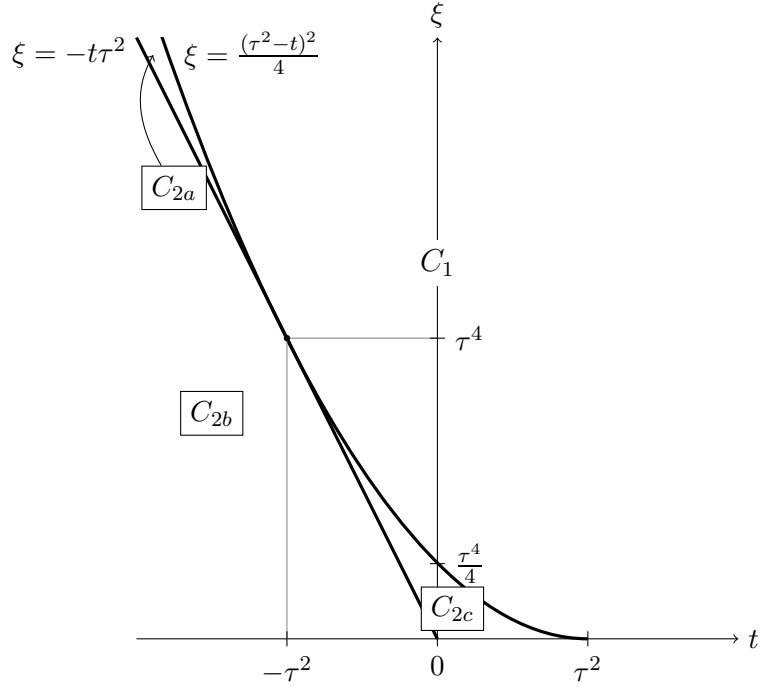

Theorem \ref{th: gamma 2cc} indicates a refinement of the $t\xi$-phase diagram.
We divide the two-cut case region $C_2$ into three subregions $C_{2a}$, $C_{2b}$ and $C_{2c}$,
depending on whether $\widehat{\beta}(\xi)$, $\widehat{\gamma}(\xi)$ and $\widehat{\delta}(\xi)$
are zero, or not. The regions are separated by the critical ray
\begin{align*}
  \xi = -t \tau^2, \qquad t < 0,
\end{align*}
which is tangent to the critical semi-parabola. The three regions are
\begin{align*}
C_{2a} : & \qquad t<-\tau^2, \quad -t\tau^2 <\xi < \xicr, \\
C_{2b} : & \qquad t<0, \quad 0<\xi<-t \tau^2, \\
C_{2c} : & \qquad -\tau^2<t<\tau^2, \quad \max(-t\tau^2,0)<\xi<\xicr,
\end{align*}
see Figure \ref{fig: subregions}. Then, according to
Theorem~\ref{th: gamma 2cc}, we have the following for $\xi < \xicr$,
\begin{itemize}
\item if $(t,\xi) \in C_{2a}$ then $\widehat{\beta}(\xi) = 0$, $\widehat{\gamma}(\xi) < 0$ and $\widehat{\delta}(\xi) > 0$;
\item if $(t,\xi) \in C_{2b}$ then $\widehat{\beta}(\xi) > 0$, $\widehat{\gamma}(\xi) = 0$ and $\widehat{\delta}(\xi) > 0$;
\item if $(t,\xi) \in C_{2c}$ then $\widehat{\beta}(\xi) > 0$, $\widehat{\gamma}(\xi) < 0$ and $\widehat{\delta}(\xi) = 0$.
\end{itemize}

The proof of Theorem \ref{th: gamma 2cc} is in the following subsection.
The  approach is similar to the one used throughout the previous section,
but there are certain complications.

\subsection{Proof of Theorem \ref{th: gamma 2cc}}

\subsubsection{The zeros of $\widehat{s}_2$ and graphs}
The symbol $\widehat{s}_2$ has a double zero in $-\xi$, see \eqref{eq: s2 hat factorization}.
The two remaining zeros are also negative. We order them such that
$x_1 \leq x_2<0$. The location of $(t,\xi)$ in the phase diagram
determines the way the zeros are ordered, see Figure \ref{fig: subregions}.

\begin{lemma}\label{lemma: subregions 2cc}
The zeros of $\widehat{s}_2(w;\xi)$ are ordered as follows:
\begin{align*}
-\xi<x_1<x_2,  & \qquad \text{if } (t,\xi) \in C_{2a},  \\
 x_1<-\xi<x_2, & \qquad \text{if } (t,\xi) \in C_{2b}, \\
 x_1<x_2<-\xi, & \qquad \text{if } (t,\xi) \in C_{2c},  \\
 x_1=-\xi<x_2, & \qquad \text{if } \xi=-t \tau^2 \textrm{ and } t<-\tau^2,  \\
 x_1<x_2=-\xi, & \qquad \text{if } \xi=-t \tau^2 \textrm{ and } -\tau^2<t<0,  \\
 x_1=x_2=-\xi, & \qquad \text{if } (t,\xi) =(-\tau^2,\tau^4).
\end{align*}
\end{lemma}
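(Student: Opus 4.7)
The plan is to exploit the factorization \eqref{eq: s2 hat factorization}: it already shows that $w=-\xi$ is a double zero, and the remaining two zeros are the roots of the quadratic
\[
    Q(w) \;:=\; w^2 + (\tau^4 - t\tau^2)\,w + \tau^4\xi.
\]
A direct calculation gives that the discriminant of $Q$ is $\tau^4\bigl((\tau^2-t)^2 - 4\xi\bigr) = 4\tau^4(\xicr - \xi)$, which is strictly positive on the two-cut region $0<\xi<\xicr$, $t<\tau^2$. So $Q$ has two distinct real roots $x_1 < x_2$; since $Q(0) = \tau^4\xi > 0$ and the vertex $w_v = \tfrac{1}{2}\tau^2(t-\tau^2)$ of $Q$ is negative, both roots lie in $(-\infty, 0)$.

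The next step is to locate $-\xi$ relative to $x_1$ and $x_2$. A direct substitution yields
\[
    Q(-\xi) \;=\; \xi^2 - (\tau^4 - t\tau^2)\,\xi + \tau^4\xi \;=\; \xi\,(\xi + t\tau^2),
\]
so the sign of $Q(-\xi)$ coincides with the sign of $\xi + t\tau^2$. In region $C_{2b}$ we have $\xi < -t\tau^2$, hence $Q(-\xi) < 0$, so $-\xi$ lies strictly between $x_1$ and $x_2$, giving $x_1 < -\xi < x_2$. In regions $C_{2a}$ and $C_{2c}$ we have $\xi + t\tau^2 > 0$ (automatic in $C_{2c}$ when $t \geq 0$), hence $Q(-\xi) > 0$ and $-\xi$ lies outside $[x_1, x_2]$; it remains to compare $-\xi$ with $w_v$.

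For that comparison the two elementary identities
\[
    -t\tau^2 - \tfrac{1}{2}\tau^2(\tau^2-t) \;=\; -\tfrac{1}{2}\tau^2(\tau^2 + t), \qquad
    \xicr - \tfrac{1}{2}\tau^2(\tau^2-t) \;=\; -\tfrac{1}{4}(\tau^2 - t)(\tau^2 + t)
\]
do all the work: the first shows $-t\tau^2 > \tfrac{1}{2}\tau^2(\tau^2-t)$ precisely when $t < -\tau^2$, and the second shows $\xicr < \tfrac{1}{2}\tau^2(\tau^2-t)$ precisely when $t > -\tau^2$. Combined with the constraints defining the regions, in $C_{2a}$ we get $\xi > -t\tau^2 > \tfrac{1}{2}\tau^2(\tau^2-t)$, i.e.\ $-\xi < w_v$, so $-\xi < x_1$; and in $C_{2c}$ we get $\xi < \xicr < \tfrac{1}{2}\tau^2(\tau^2-t)$, i.e.\ $-\xi > w_v$, so $-\xi > x_2$.

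The boundary cases then follow by continuity. On the ray $\xi = -t\tau^2$ (with $t < 0$), $Q(-\xi) = 0$, so $-\xi$ is one of the roots; continuity from $C_{2a}$ identifies it as $x_1$ when $t < -\tau^2$, and continuity from $C_{2c}$ identifies it as $x_2$ when $-\tau^2 < t < 0$. At the multi-critical point $(t,\xi) = (-\tau^2,\tau^4)$ one has $\xi = \xicr$, which forces $x_1 = x_2 = w_v$, and a direct substitution gives $w_v = -\tau^4 = -\xi$, hence $x_1 = x_2 = -\xi$. Nothing in the above is technically deep; the only mild obstacle is keeping the three regions and their boundary rays organized, and I would handle this by setting up the $w_v$-comparison once and invoking it uniformly across the regions.
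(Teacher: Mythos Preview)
Your proof is correct and is precisely the straightforward computation the paper alludes to (the paper's own proof consists of the single sentence ``The proof of this lemma is straightforward''). You have supplied the details: analyzing the quadratic factor $Q(w)=w^2+(\tau^4-t\tau^2)w+\tau^4\xi$ from the factorization \eqref{eq: s2 hat factorization}, computing $Q(-\xi)=\xi(\xi+t\tau^2)$ to locate $-\xi$ relative to the interval $[x_1,x_2]$, and then comparing $-\xi$ with the vertex $w_v$ to decide the side when $Q(-\xi)>0$; the boundary cases follow by continuity and the vanishing of the discriminant at $\xi=\xicr$.
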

\begin{proof}
The proof of this lemma is straightforward.
\end{proof}

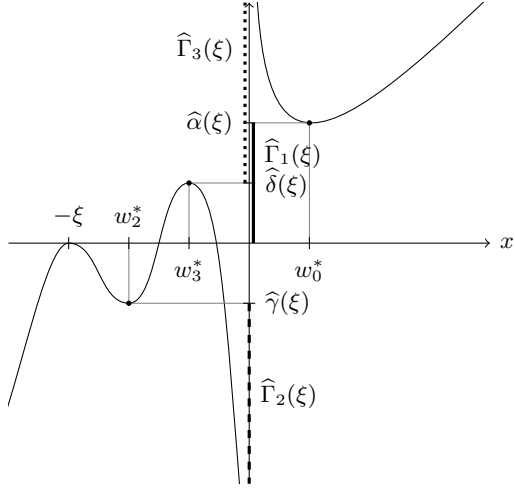
\begin{figure}[tb]
\centering
\begin{tikzpicture}[scale=0.8]
\draw[->] (-4,0) -- (4,0) node[font=\footnotesize, right]{$x$};
\draw[->] (0,-4)-- (0,4);
\clip (-4,-4)rectangle (4,4);
\draw (0.1,4.8) .. controls (0.2,1) and (1,1.1) .. (4.8,4.9);
\draw (-0.1,-4.8) .. controls (-0.4,-1) and (-0.5,1) .. (-1,1);
\draw (-1,1) .. controls (-1.5,1) and (-1.5,-1) .. (-2,-1);
\draw (-2,-1) .. controls (-2.5,-1) and (-2.5,0) .. (-3,0);
\draw (-3,0) .. controls (-3.5,0) and (-4,-3.7) .. (-4.9,-4.8);
\draw[help lines] (1,0) -- (1,2);
\draw (1,0.1)--(1,-0.1) node[font=\footnotesize,below]{$w_0^*$};
\draw[help lines] (0,2)--(1,2);
\draw (0.1,2)--(-0.1,2) node[font=\footnotesize,left]{$\widehat \alpha(\xi)$};
\draw (-3,-0.1)--(-3,0.1) node[font=\footnotesize,above]{$-\xi$};
\draw[help lines] (-2,-1)--(-2,0);
\draw (-2,-0.1)--(-2,0.1) node[font=\footnotesize,above]{$w_2^*$};
\draw[help lines] (-2,-1)--(0,-1);
\draw (-0.1,-1)--(0.1,-1) node[font=\footnotesize,right]{$\widehat \gamma(\xi)$};
\draw[help lines] (-1,1)--(-1,0);
\draw (-1,0.1)--(-1,-0.1) node[font=\footnotesize,below]{$w_3^*$};
\draw[help lines] (-1,1)--(0,1);
\draw (-0.1,1)--(0.1,1) node[font=\footnotesize,right]{$\widehat \delta(\xi)$};
\draw[very thick] (0.07,0)--(0.07,2) node[near end, right, font=\footnotesize]{$\widehat \Gamma_1(\xi)$};
\draw[very thick,dashed] (0,-1)--(0,-4) node[midway, right, font=\footnotesize]{$\widehat \Gamma_2(\xi)$};
\draw[very thick,dotted] (-0.07,1)--(-0.07,4) node[near end, left, font=\footnotesize]{$\widehat \Gamma_3(\xi)$};
\filldraw (1,2) circle (1pt);
\filldraw (-2,-1) circle (1pt);
\filldraw (-1,1) circle (1pt);
\end{tikzpicture}
\caption{The graph of $x \mapsto \widehat s_2(x;\xi)$ for $(t,\xi) \in C_{2a}$.}
\label{fig: s2R1}
\end{figure}

The three Figures \ref{fig: s2R1}, \ref{fig: s2R2}, and \ref{fig: s2R3} show
sketches of the graph of $\widehat{s}_2$ for $(t,\xi)$
belonging to $C_{2a},$ $C_{2b}$, and $C_{2c}$, respectively.
The graphs have the following properties:
\begin{itemize}
\item $\widehat{s}_2$ has four negative zeros, counted with multiplicity. $-\xi$ is a double zero.
\item $\widehat{s}_2$ attains a local minimum $\widehat \alpha (\xi)>0$ in a point $w_0^*>0$.
\item $\widehat{s}_2$ attains three local extrema $\widehat \beta (\xi)\geq0$, $\widehat \gamma (\xi)\leq0$, and $\widehat \delta (\xi) \geq 0$ in the respective points $w_1^*<w_2^*<w_3^*<0$. $\widehat \beta(\xi)$ and $\widehat\delta(\xi)$ are local maxima. $\widehat \gamma(\xi)$ is a local minimum. Only the extremum attained at the double zero $-\xi$ is zero.
\item If  $(t,\xi) \in C_{2a}$, then $w_1^*=-\xi$, so that $\widehat\beta(\xi)=0$, $\widehat \gamma(\xi)<0$, and $\widehat \delta(\xi)>0$.
\item If  $(t,\xi) \in C_{2b}$, then $w_2^*=-\xi$, so that $\widehat\beta(\xi)>0$, $\widehat \gamma(\xi)=0$, and $\widehat \delta(\xi)>0$.
\item If  $(t,\xi) \in C_{2c}$, then $w_3^*=-\xi$, so that $\widehat\beta(\xi)>0$, $\widehat \gamma(\xi)<0$, and $\widehat \delta(\xi)=0$.
\end{itemize}

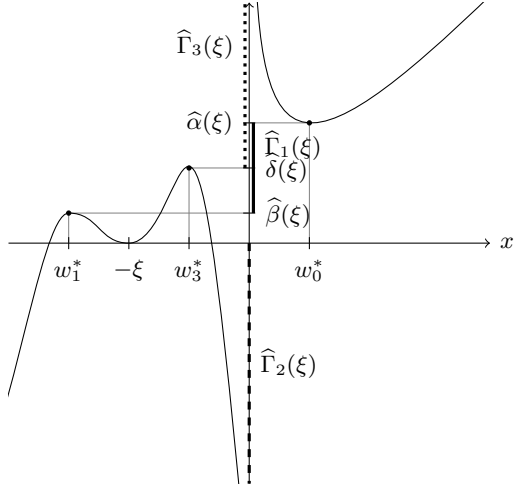
\begin{figure}[tb]
\centering
\begin{tikzpicture}[scale=0.8]
\draw[->] (-4,0) -- (4,0) node[font=\footnotesize,right]{$x$};
\draw[->] (0,-4)-- (0,4);
\clip (-4,-4)rectangle (4,4);
\draw[help lines] (1,0) -- (1,2);
\draw (1,0.1)--(1,-0.1) node[font=\footnotesize,below]{$w_0^*$};
\draw[help lines] (0,2)--(1,2);
\draw (0.1,2)--(-0.1,2) node[font=\footnotesize,left]{$\widehat \alpha(\xi)$};
\draw (-2,0.1)--(-2,-0.1) node[font=\footnotesize,below]{$-\xi$};
\draw[help lines] (-1,1.25)--(0,1.25);
\draw (-0.1,1.25)--(0.1,1.25) node[font=\footnotesize,right]{$\widehat \delta(\xi)$};
\draw[help lines] (-1,1.25)--(-1,0);
\draw (-1,0.1)--(-1,-0.1) node[font=\footnotesize,below]{$w_3^*$};
\draw[help lines] (-3,0.5)--(-3,0);
\draw (-3,0.1)--(-3,-0.1) node[font=\footnotesize,below]{$w_1^*$};
\draw[help lines] (-3,0.5)--(0,0.5);
\draw (-0.1,0.5)--(0.1,0.5) node[font=\footnotesize,right]{$\widehat \beta(\xi)$};
\draw (0.1,4.8) .. controls (0.2,1) and (1,1.1) .. (4.8,4.9);
\draw (-0.1,-4.8) .. controls (-1,5) and (-1,-0) .. (-2,0);
\draw (-3,0.5) .. controls (-2.5,0.5) and (-2.5,0) .. (-2,0);
\draw (-3,0.5) .. controls (-3.5,0.5) and (-4,-3.7) .. (-4.9,-4.8);
\draw[very thick] (0.07,0.5)--(0.07,2) node[near end, right, font=\footnotesize]{$\widehat \Gamma_1(\xi)$};
\draw[very thick,dashed] (0,0)--(0,-4) node[midway, right, font=\footnotesize]{$\widehat \Gamma_2(\xi)$};
\draw[very thick,dotted] (-0.07,1.25)--(-0.07,4) node[near end, left, font=\footnotesize]{$\widehat \Gamma_3(\xi)$};
\filldraw (1,2) circle (1pt);
\filldraw (-1,1.25) circle (1pt);
\filldraw (-3,0.5) circle (1pt);
\end{tikzpicture}
\caption{The graph of $x \mapsto \widehat s_2(x;\xi)$ for $(t,\xi) \in C_{2b}$.
The figure shows a situation where $\widehat{\beta}(\xi) < \widehat{\delta}(\xi) < \widehat{\alpha}(\xi)$.
It is also possible that $\widehat{\beta}(\xi) \geq \widehat{\delta}(\xi)$
or $\widehat{\delta}(\xi) \geq \widehat{\alpha}(\xi)$.}
\label{fig: s2R2}
\end{figure}

\begin{figure}[tb]
\centering
\begin{tikzpicture}[scale=0.8]
\draw[->] (-4,0) -- (4,0) node[font=\footnotesize,right]{$x$};
\draw[->] (0,-4)-- (0,4);
\clip (-4,-4)rectangle (4,4);
\draw[help lines] (1,0) -- (1,2);
\draw (1,0.1)--(1,-0.1) node[font=\footnotesize,below]{$w_0^*$};
\draw[help lines] (0,2)--(1,2);
\draw (0.1,2)--(-0.1,2) node[font=\footnotesize,left]{$\widehat \alpha(\xi)$};
\draw (-1,-0.1)--(-1,0.1) node[font=\footnotesize,above]{$-\xi$};
\draw[help lines] (-2,-1)--(-2,0);
\draw (-2,-0.1)--(-2,0.1) node[font=\footnotesize,above]{$w_2^*$};
\draw[help lines] (-2,-1)--(0,-1);
\draw (-0.1,-1)--(0.1,-1) node[font=\footnotesize,right]{$\widehat \gamma(\xi)$};
\draw[help lines] (-3,1)--(-3,0);
\draw (-3,0.1)--(-3,-0.1) node[font=\footnotesize,below]{$w_1^*$};
\draw[help lines] (-3,1)--(0,1);
\draw (-0.1,1)--(0.1,1) node[font=\footnotesize,right]{$\widehat \beta(\xi)$};
\draw (0.1,4.8) .. controls (0.2,1) and (1,1.1) .. (4.8,4.9);
\draw (-0.1,-4.8) .. controls (-0.4,-1) and (-0.5,0) .. (-1,0);
\draw (-1,0) .. controls (-1.5,0) and (-1.5,-1) .. (-2,-1);
\draw (-2,-1) .. controls (-2.5,-1) and (-2.5,1) .. (-3,1);
\draw (-3,1) .. controls (-3.5,1) and (-4,-3.7) .. (-4.9,-4.8);
\draw[very thick] (0.07,1)--(0.07,2) node[near end, right, font=\footnotesize]{$\widehat \Gamma_1(\xi)$};
\draw[very thick,dashed] (0,-1)--(0,-4) node[midway, right, font=\footnotesize]{$\widehat \Gamma_2(\xi)$};
\draw[very thick,dotted] (-0.07,0)--(-0.07,4) node[near end, left, font=\footnotesize]{$\widehat \Gamma_3(\xi)$};
\filldraw (1,2) circle (1pt);
\filldraw (-2,-1) circle (1pt);
\filldraw (-3,1) circle (1pt);
\end{tikzpicture}
\caption{The graph of $x \mapsto \widehat s_2(x;\xi)$ for $(t,\xi) \in C_{2c}$.}
\label{fig: s2R3}
\end{figure}

\subsubsection{Auxiliary lemmas}

The following two lemmas are the analogues of Lemmas \ref{lemma: axes 1cc}
and \ref{lemma: gamma 1cc}.

\begin{lemma}\label{lemma: axes 2cc}
Let $\xi  \in (0, \xicr)$. Assume that $w_a,w_b \in \mathbb C$ are such that
$w_a \neq w_b$, $|w_a|=|w_b|$, and $\widehat{s}_2(w_a;\xi)=\widehat{s}_2(w_b;\xi)=z$.
Then, $z \in \mathbb R$.
\end{lemma}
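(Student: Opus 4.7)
The plan is to mirror the proof of Lemma \ref{lemma: axes 1cc}, but exploit the special feature that in the two-cut case all non-trivial zeros of $\widehat s_2$ lie on the negative real axis. First I would rewrite the symbol via the factorisation \eqref{eq: s2 hat factorization} as
\[
    \widehat s_2(w;\xi) = \frac{(w-x_1)(w-x_2)(w+\xi)^2}{w^3},
\]
where $x_1,x_2$ are the roots of the quadratic $w^2+(\tau^4-t\tau^2)w+\tau^4\xi$. The assumptions $t<\tau^2$ and $0<\xi<\xicr=(\tau^2-t)^2/4$ guarantee a positive discriminant, a negative sum of roots, and a positive product, so that $x_1\le x_2<0$. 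Thus all four zeros of $\widehat s_2$ (counted with multiplicity) are real and negative; this is the key input, already identified in Lemma \ref{lemma: subregions 2cc}.

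The heart of the argument will be a monotonicity statement. I would fix $\rho=|w_a|=|w_b|>0$, parametrise the circle $|w|=\rho$ by $w=\rho e^{i\theta}$ with $\theta\in[-\pi,\pi]$, and observe that for any $r<0$,
\[
    |\rho e^{i\theta}-r|^2 = \rho^2 - 2\rho r\cos\theta + r^2
\]
is a strictly decreasing function of $\theta$ on $[0,\pi]$, because $-2\rho r>0$ while $\cos\theta$ is strictly decreasing there. Consequently
\[
    \theta \;\longmapsto\; |\widehat s_2(\rho e^{i\theta};\xi)| = \frac{|\rho e^{i\theta}-x_1|\,|\rho e^{i\theta}-x_2|\,|\rho e^{i\theta}+\xi|^2}{\rho^3}
\]
is a product of positive, strictly decreasing functions on $[0,\pi]$, hence itself strictly decreasing on $[0,\pi]$ and even in $\theta$ on $[-\pi,\pi]$.

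To conclude, I would write $w_a=\rho e^{i\theta_a}$ and $w_b=\rho e^{i\theta_b}$ with $\theta_a,\theta_b\in(-\pi,\pi]$. The equality $\widehat s_2(w_a;\xi)=\widehat s_2(w_b;\xi)$ forces in particular $|\widehat s_2(\rho e^{i\theta_a};\xi)|=|\widehat s_2(\rho e^{i\theta_b};\xi)|$, and the strict monotonicity above then yields $|\theta_a|=|\theta_b|$. Since $w_a\neq w_b$, the only remaining possibility is $\theta_b=-\theta_a$, i.e.\ $w_b=\overline{w_a}$. As the coefficients of $\widehat s_2$ are real, this gives
\[
    z = \widehat s_2(w_b;\xi) = \widehat s_2(\overline{w_a};\xi) = \overline{\widehat s_2(w_a;\xi)} = \overline z,
\]
so $z\in\R$, as required.

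The only point that requires some care is the negativity of $x_1$ and $x_2$; this is exactly where both hypotheses $t<\tau^2$ and $0<\xi<\xicr$ enter in an essential way. Unlike the one-cut Lemma \ref{lemma: axes 1cc}, where the odd structure of $s_1$ left open the alternative $z\in i\R$, here the fact that $x_1,x_2,-\xi$ all have the same sign removes any imaginary-axis symmetry and explains why the conclusion is the stronger $z\in\R$.
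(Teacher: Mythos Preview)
Your proof is correct and follows essentially the same approach as the paper: factorise $\widehat s_2$ using \eqref{eq: s2 hat factorization}, use the negativity of all four zeros to show that $|\widehat s_2(\rho e^{i\theta};\xi)|$ is even and strictly decreasing on $[0,\pi]$, conclude $w_b=\overline{w_a}$, and hence $z=\overline z$. Your justification that $x_1,x_2<0$ (via the sign of the discriminant, sum, and product of the quadratic factor) is slightly more explicit than the paper's, which simply refers to Lemma~\ref{lemma: subregions 2cc}.
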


\begin{proof}
The complex numbers $w_a$ and $w_b$ lie on a circle of radius
$\rho=|w_a|=|w_b|$ centered at the origin of the complex plane.
We can factorize $\widehat{s}_2(w;\xi)$ as
\[
\widehat{s}_2(w;\xi)=\frac{(w+\xi)^2(w-x_1)(w-x_2)}{w^3},
\]
where $x_1,x_2<0$. Thus,
\[
    |\widehat{s}_2(w,\xi)|=\frac{\dist(w,-\xi)^2\dist(w,x_1)\dist(w,x_2)}{\rho^3}, \qquad \textrm{if } |w|=\rho.
\]
Since $x_1,x_2,-\xi<0$, it follows that
\[
    [-\pi,\pi]\to \mathbb R: \theta \mapsto \frac{\dist(\rho e^{i\theta},-\xi)^2\dist(\rho e^{i\theta},x_1)\dist(\rho e^{i\theta},x_2)}{\rho^3},
\]
is an even function that is strictly decreasing as $\theta$ increases from $0$ to $\pi$.
Thus the equality for $\theta = \theta_a$, and $\theta = \theta_b$
can only occur if $\theta_b=\pm \theta_a$.
Then, it follows from the assumptions of the lemma that $w_b=\overline{w_a}$ and
\[
    z=\widehat{s}_2(w_b;\xi)=\widehat{s}_2(\overline{w_a},\xi)=\overline{(\widehat{s}_2(w_a,\xi)}=\overline {z}.
\]
Therefore, $z \in \R$.
\end{proof}

\begin{lemma} \label{lemma: gamma 2cc}
Let $\xi \in (0,\xicr)$. Then, $\widehat \Gamma_j(\xi) \subset \R$ for $j=1,2,3$.
Moreover, $\widehat \Gamma_1(\xi) \cap \widehat \Gamma_2(\xi) = \widehat \Gamma_2(\xi) \cap
\widehat \Gamma_3(\xi)= \emptyset$.
\end{lemma}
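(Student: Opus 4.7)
\textbf{Plan for the proof of Lemma \ref{lemma: gamma 2cc}.}
The argument follows the template of Lemma~\ref{lemma: gamma 1cc}, but is actually simpler: every zero of $\widehat s_2(\cdot;\xi)$ is real and negative, so the conclusion of the axis lemma (Lemma~\ref{lemma: axes 2cc}) is $z\in\R$ rather than $z\in\R\cup i\R$. This eliminates the ``imaginary axis'' branch of the case analysis entirely.

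For the inclusion $\widehat\Gamma_j(\xi)\subset\R$, fix $z\in\widehat\Gamma_j(\xi)$ and distinguish two cases. If all four roots of $\widehat s_2(w;\xi)=z$ are simple, then two distinct roots share the same modulus, and Lemma~\ref{lemma: axes 2cc} gives $z\in\R$. If instead the equation has a multiple root, then $z$ is a branch point, i.e., a critical value of $\widehat s_2(\cdot;\xi)$. The critical points are the zeros of the numerator of $\widehat s_2'(w;\xi)$, a real polynomial of degree $4$ in $w$; the graphs in Figures~\ref{fig: s2R1}--\ref{fig: s2R3} display four real critical points $w_0^*,w_1^*,w_2^*,w_3^*$ (one positive, three negative), hence all critical points are real, and so are all branch points.

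For the second assertion, suppose $z\in\widehat\Gamma_j(\xi)\cap\widehat\Gamma_{j+1}(\xi)$ for some $j\in\{1,2\}$. Then three of the four roots of $\widehat s_2(w;\xi)=z$ have equal moduli, and by the previous step $z\in\R$. The key observation, extracted from the proof of Lemma~\ref{lemma: axes 2cc}, is that any two \emph{distinct} roots of equal modulus are necessarily complex conjugates of each other. Since complex conjugation is an involution, a given point can share its modulus with at most one distinct partner, so three \emph{distinct} roots with the same modulus is impossible. Therefore at least two of the three equal-modulus roots coincide, forcing $z$ to be one of the real branch points $\{0,\widehat\alpha(\xi),\widehat\beta(\xi),\widehat\gamma(\xi),\widehat\delta(\xi)\}$. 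It then remains to verify, for each such branch point in turn, that the moduli $|\widehat w_1(z;\xi)|,|\widehat w_2(z;\xi)|,|\widehat w_3(z;\xi)|$ are not all equal. This is read directly from the graphs: at $z=0$ the roots are the double root $-\xi$ and the two real zeros $x_1,x_2<0$, and by Lemma~\ref{lemma: subregions 2cc} the three numbers $\xi,|x_1|,|x_2|$ are pairwise distinct in each open subregion $C_{2a},C_{2b},C_{2c}$; at $z=\widehat\alpha(\xi)>0$ the double root is $w_0^*>0$ and the two remaining roots lie on the negative axis (or form a conjugate pair) with modulus distinct from $w_0^*$, which follows by inspection of the graph combined with the product-of-roots relation $\prod_i \widehat w_i=D(\xi)>0$; and similarly at $\widehat\beta(\xi),\widehat\gamma(\xi),\widehat\delta(\xi)$.

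The main obstacle is the last, direct verification at each branch point. Each check is conceptually routine but must be split according to which of the subregions $C_{2a},C_{2b},C_{2c}$ one lies in, because in each subregion one of the branch values $\widehat\beta,\widehat\gamma,\widehat\delta$ collapses to $0$ and the corresponding negative critical point coincides with the double zero $-\xi$ (compare Lemma~\ref{lemma: subregions 2cc}). Some additional care is needed on the critical ray $\xi=-t\tau^2$, where $-\xi$ becomes a triple zero of $\widehat s_2(\cdot;\xi)$ and the branch-point structure partially degenerates; there one either adapts the argument directly or first establishes the lemma on the open subregions and then extends it to the ray by a continuity argument on the algebraic curve $\widehat s_2(w;\xi)=z$.
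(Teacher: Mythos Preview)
Your approach is correct and is exactly what the paper means by ``analogous to Lemma~\ref{lemma: gamma 1cc}'': reality of the $\widehat\Gamma_j$ via Lemma~\ref{lemma: axes 2cc} plus realness of the branch points, then disjointness by noting that distinct equal-modulus roots must be complex conjugates, so three equal moduli force a coincidence and hence a branch point, which is then excluded one by one. Your caution about the critical ray $\xi=-t\tau^2$ is well placed and goes beyond the paper's one-line proof --- there $-\xi$ is a triple zero of $\widehat s_2$, so the disjointness claim literally fails at the single point $z=0$ (no continuity argument in $\xi$ can rescue it), but this is harmless for the sequel since the proof of Theorem~\ref{th: gamma 2cc} only invokes the lemma for its continuity argument away from the branch points.
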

\begin{proof}
The proof is analogous to that of Lemma \ref{lemma: gamma 1cc}.
\end{proof}

\subsubsection{Transformed symbol}
For convenience, let us again introduce a transformed symbol $\widehat S_2$
\begin{equation}
\widehat S_2(W;\xi)=\widehat{s}_2(\xi W;\xi)=\xi \frac{(W+1)^2}{W}-t\tau^2\frac{(W+1)^2}{W^2}+\tau^4\frac{(W+1)^3}{W^3},
\label{eq: S2}
\end{equation}
see \eqref{eq: s2 hat factorization}.
Note that $\widehat S_2$ depends on $\xi$ in a simple way.
The branch points of the transformed symbol $\widehat S_2$ coincide with
the branch points of the symbol $\widehat{s}_2$. To see this, define
$W_j^*(\xi)=w_j^*(\xi)/\xi$ for $j=0,1,2,3$. Then,
\[ \frac{\partial }{\partial W} \widehat S_2(W_j^*(\xi);\xi)=0, \qquad \text{for } j=0,1,2,3, \]
and
\begin{equation} \label{eq: branch points in S2}
\begin{aligned}
\widehat \alpha(\xi)&=\widehat S_2(W_0^*(\xi);\xi), & \widehat \beta(\xi) &= \widehat S_2(W_1^*(\xi);\xi), \\
\widehat \gamma(\xi)&=\widehat S_2(W_2^*(\xi);\xi), & \widehat \delta(\xi) &= \widehat S_2(W_3^*(\xi);\xi).
\end{aligned}
\end{equation}

\subsubsection{Proof of Theorem \ref{th: gamma 2cc}}

\begin{proof} 
We prove the first part of the theorem only for the case that $(t,\xi) \in C_{2a}$.
The other cases can be treated similarly. Figure \ref{fig: s2R1} shows the graph of the symbol $\widehat s_2$
for the case $(t, \xi) \in C_{2a}$.

For $z \in (\widehat \gamma(\xi),0)$, the equation
$\widehat s_2(x;\xi)=z$ has four different negative solutions.
Therefore, $(\widehat \gamma(\xi),0)\cap \widehat \Gamma_j(\xi) = \emptyset$, $j=1,2,3$.
If $z=\widehat \gamma(\xi)$ the equation has the double solution $w_2^*$,
a negative solution with a greater modulus and a negative solution with a smaller modulus.
We conclude that $\widehat \gamma(\xi) \in \widehat \Gamma_2(\xi)$. Next, take $z <\widehat \gamma(\xi)$.
Then, the equation has two negative solutions and one pair of complex conjugate solutions.
Using the continuity argument (which was also used in the proof of Theorem \ref{th: gamma 1cc})
we obtain that the modulus of the complex conjugate solutions lies
between the moduli of the negative solutions.
Therefore, $(-\infty,\widehat \gamma(\xi)] \subset \widehat \Gamma_2(\xi)$.

Now focus on $z \geq 0$ with the extra assumption that $\widehat \alpha(\xi)>\widehat \delta(\xi)$.
We can make a similar reasoning if $\widehat \alpha(\xi) \leq \widehat \delta(\xi)$.
If $z=0$ the equation has a double solution $-\xi$ and two different negative solutions
with a smaller modulus. Therefore, $0$ belongs to $\widehat \Gamma_1(\xi)$.
If $z \in (0,\widehat \delta(\xi))$ we find two different negative solutions
and one pair of complex conjugate solutions. The continuity argument guarantees
that the modulus of the complex solutions is the greatest, so
that $[0,\widehat \delta(\xi)) \subset \widehat \Gamma_1$.
Proceeding in the same way, we obtain
$[\widehat \delta(\xi),\widehat \alpha(\xi)] \subset \widehat \Gamma_1(\xi) \cap \widehat \Gamma_3(\xi)$.

For $z > \widehat \alpha(\xi)$ the equation has two different positive
solutions and one pair of complex conjugate solutions. By the
continuity argument we obtain $(\widehat \alpha(\xi),\infty) \subset
\widehat \Gamma_1(\xi)$ or $(\widehat \alpha(\xi),\infty)\subset \widehat \Gamma_3(\xi)$. Since
$\widehat \Gamma_1(\xi)$ is a compact set, see \cite{DK1}, only the second
inclusion holds.

Collecting this information we obtain
\[
\widehat \Gamma_1(\xi)=[0,\widehat \alpha(\xi)], \quad
\widehat \Gamma_2(\xi)=(-\infty,\widehat \gamma(\xi)], \quad \text{ and }
\widehat \Gamma_3(\xi)=[\widehat \delta(\xi),\infty).
\]
so that \eqref{eq: hatGamma 2cc} is proved under the assumption that $(t,\xi) \in C_{2a}$
and $\widehat \alpha(\xi) > \widehat \delta(\xi)$. One can prove all other cases in a similar way.

Next, let us study the behavior of  $\widehat\alpha(\xi),$ $\widehat\beta(\xi)$, $\widehat\gamma(\xi)$,
and $\widehat\delta(\xi)$ as $\xi$ increases.
Since all branch points are of the form $\widehat S_2(W_j^*(\xi);\xi)$ for some $j = 0,1,2,3$,
see \eqref{eq: S2} and \eqref{eq: branch points in S2}, we are interested in the sign of
\begin{align}
\frac{\textrm d}{\textrm d \xi}\widehat S_2(W_j^*(\xi);\xi) & = \frac{\partial \widehat S_2}{\partial W}(W_j^*(\xi);\xi)
\frac{\textrm d W_j^*(\xi)}{\textrm d \xi}+\frac{\partial \widehat S_2}{\partial \xi}(W_j^*(\xi);\xi) \nonumber \\
 & = \frac{\partial \widehat S_2}{\partial \xi}(W_j^*(\xi);\xi) \nonumber \\
 & = \frac{(W_j^*(\xi)+1)^2}{W_j^*(\xi)}, \label{eq: key}
\end{align}
where the last equality holds because of \eqref{eq: S2}.

Since $W_0^*(\xi)=w_0^*(\xi)/\xi>0$, we obtain from \eqref{eq: key}
that $\xi \mapsto \widehat \alpha(\xi)$ is a strictly increasing function.
Putting $\xi = -1$ in \eqref{eq: key}, we also get that
that $0 = \widehat S_2(-1;\xi)$ remains zero for fixed $t$ and $(t,\xi)$ in a fixed subregion $C_{2a},C_{2b},$ or $C_{2c}$.
The other two branch points can be written as $\widehat s_2(w_j^*(\xi);\xi)$
with $w_j^*(\xi)<0$ and $w_j^*(\xi) \neq -\xi$.
In terms of the transformed symbol they are $\widehat S_2(W_j^*(\xi);\xi)$ with
$W_j^*(\xi)<0$ and $W_j^*(\xi) \neq -1$. Equation \eqref{eq: key} then implies
that these branch points are strictly decreasing functions of $\xi$.

Let us now concentrate on the behavior of the branch points as
$\xi \to \xicr-$ for fixed $t$, so that $(t,\xi)$ approaches the
critical semi-parabola. This behavior is a straightforward corollary of the following claim
\begin{equation}
\widehat s_2(w^2;\xicr)=  s_1(w;\xicr)^2, \qquad w \in \C \setminus \{0\}.
\label{eq: s1 s2}
\end{equation}
Let us prove this claim. Using \eqref{eq: s1}, \eqref{eq: s2 hat}, and \eqref{eq: a a0 a1}, we compute
\begin{align*}
\lim_{\xi \to \xicr-}\widehat{s}_2(w^2;\xi) &=  w^2\left(1+\frac{b_0(\xicr)}{w^2}+ \frac{c_0(\xicr)}{w^4}\right) \left(1+\frac{b_1(\xicr)}{w^2}+ \frac{c_1(\xicr)}{w^4}\right)\\
&= \left( w+\frac{b(\xicr)}{w}+\frac{c(\xicr)}{w^3} \right)^2 \\
&= \lim_{\xi \to \xicr+} s_1(w;\xi)^2.
\end{align*}

The calculation of the limits of the branch points if $\xi \to 0+$ can be done in the same way as in the proof of Theorem \ref{th: gamma 1cc} and is left to the reader.
\end{proof}

\subsection{Reformulation of results in the two-cut case} \label{subsec: reformulation 2cc}

In what follows it will be convenient to undo the doubling of the recurrence
and consider instead of the functions $\widehat{w}_j(z; \xi)$,
the sets $\widehat{\Gamma}_j(\xi)$, and the measures $\widehat \mu_j^{\xi}$
also for $\xi \in (0,\xicr)$,
\begin{equation}
    w_j(z;\xi)=\widehat w_j(z^2,\xi), \qquad j=1,2,3,4,
    \label{eq: wj 2cc}
\end{equation}
\[
\Gamma_j(\xi) = \{z \in \C \mid z^2 \in \widehat \Gamma_j(\xi)\}, \quad j=1,2,3,
\]
and
\begin{equation}
\begin{aligned}
    \ud \mu_1^\xi(x) &=|x|\frac{\mathrm d \widehat{\mu}_1^\xi}{\mathrm d x}(x^2) \ud x,
    && x \in \Gamma_1(\xi), \\
    \ud \mu_2^\xi(z) &= | z|\frac{\mathrm d \widehat{\mu}_2^\xi}{\mathrm d x}(z^2) |\ud z|,
    && z \in \Gamma_2(\xi), \\
    \ud  \mu_3^\xi(x) &=|x|\frac{\mathrm d \widehat{\mu}_3^\xi}{\mathrm d x}(x^2) \ud x,
    && x \in \Gamma_3(\xi).
\end{aligned} \label{eq: muj 2cc}
\end{equation}

Then, with these definitions
\begin{equation*} \label{eq: Gammaj3}
    \Gamma_j(\xi) = \{z \in \mathbb C \mid |w_j(z;\xi)|=|w_{j+1}(z;\xi)|\},\quad j=1,2,3;
\end{equation*}
and
\begin{equation}
    \ud \mu_j^{\xi} (z) =
    \frac12 \cdot \frac{1}{2\pi i}
    \left( \frac{{w_j'}_-(z;\xi)}{{w_j}_-(z;\xi)} - \frac{{w_j'}_+(z;\xi)}{{w_j}_+(z;\xi)}\right) \ud z,
    \quad z \in \Gamma_j(\xi), \quad j=1,2,3.
\label{eq: muj 2cc bis}
\end{equation}
One can also check that the $\mu_j^{\xi}$ are measures on $\Gamma_j(\xi)$ with total masses
\[ \int \ud \mu_1^{\xi} = 1, \quad \int \ud \mu_2^{\xi} = \frac23, \quad \text{and } \int \ud \mu_3^{\xi} = \frac13. \]

By Theorem \ref{th: gamma 2cc} we have
\begin{align*}
 \Gamma_1(\xi) & = [-\alpha(\xi),-\beta(\xi)]\cup[\beta(\xi), \alpha(\xi)],\nonumber \\
 \Gamma_2(\xi) & = (-i\infty, - i\gamma(\xi)]\cup[i\gamma(\xi),+i\infty),\nonumber \\
 \Gamma_3(\xi) & = (-\infty,-\delta(\xi)]\cup[\delta(\xi), +\infty). \label{eq: Gamma 2cc}
\end{align*}
with
\begin{equation*}
    \alpha(\xi)=\sqrt{\widehat \alpha(\xi)}, \quad \beta(\xi)=\sqrt{\widehat \beta(\xi)},
    \quad \gamma(\xi)=\sqrt{-\widehat \gamma(\xi)}, \quad \text{ and } \quad \delta(\xi)=\sqrt{\widehat \delta(\xi)}.
    \label{eq: branch points hat}
\end{equation*}
Theorem \ref{th: gamma 2cc} also shows that for every fixed $t < \tau^2$
\begin{itemize}
\item[\rm (a)] $\xi \mapsto \alpha(\xi)$ is strictly increasing for $0 < \xi < \xicr$ with
\[ \lim_{\xi \to 0 + } \alpha(\xi) = \tau \sqrt{\tau^2-t},
    \quad \textrm{and }
    \lim_{\xi \to \xicr-} \alpha(\xi) = \lim_{\xi \to \xicr+} \alpha(\xi), \]
\item[\rm (b)] $\beta(\xi) = 0$ if and only if $t < - \tau^2$ and $- t \tau^2 \leq \xi < \xicr$. Otherwise
     $\xi \mapsto \beta(\xi)$ is positive and strictly decreasing with
\begin{align*}
    \lim_{\xi \to 0+} \beta(\xi)  & = \tau \sqrt{\tau^2-t},  \\
    \lim_{\xi \to \xicr-} \beta(\xi) & = 0,  && \text{for } -\tau^2 \leq t < \tau^2, \\
    \lim_{\xi \to -t\tau^2-} \beta(\xi) & = 0, && \text{for } t \leq -\tau^2,
\end{align*}
\item[\rm (c)]
    $\gamma(\xi)=0$ if and only if $t < 0$ and $ 0<\xi \leq - t \tau^2$. Otherwise
    $\xi \mapsto \gamma(\xi)$ is positive and strictly increasing with
    \begin{align*}
    \lim_{\xi \to 0+} \gamma(\xi) & =  \frac{2 t^{3/2}}{3 \sqrt{3} \tau} = y^*,
     && \text{for } 0\leq t<\tau^2, \\
    \lim_{\xi \to -t\tau^2} \gamma(\xi) & = 0, && \text{for }  t \leq 0,  \\
    \lim_{\xi \to \xicr-} \gamma(\xi) & = \lim_{\xi \to \xicr+} \gamma(\xi)
    \end{align*}
\item[\rm (d)]
    $\delta(\xi) =0$ if and only if $-\tau^2 < t < \tau^2$ and $-t \tau^2 \leq \xi < \xicr$.
    Otherwise $\xi \mapsto \delta(\xi)$ is positive and strictly decreasing with
    \begin{align*}
    \lim_{\xi \to 0+} \delta(\xi) & = \frac{2 (-t)^{3/2}}{3 \sqrt{3}} = x^*, && \text{for } t \leq 0, \\
    \lim_{\xi \to \xicr-} \delta(\xi) & = 0, && \text{for } t \leq -\tau^2, \\
    \lim_{\xi \to -t \tau^2} \delta(\xi) & = 0, && \text{for } -\tau^2 \leq t<0.
    \end{align*}
\end{itemize}

Figure~\ref{fig: gamma 2cc} shows sketches of the sets $\Gamma_j(\xi)$, $j=1,2,3$, for $(t,\xi)$ belonging to $C_{2a}$, $C_{2b}$, and $C_{2c}$.

\begin{figure}[h]
\centering
\subfigure[]{
\begin{tikzpicture}[scale=0.35]
\draw[help lines,->] (-7,0)--(7,0) node[right]{$\mathbb R$};
\draw[help lines,->] (0,-5)--(0,5) node[above]{$i \mathbb R$};
\draw (0,0) node[below left] {0};
\draw[very thick] (-3,0.05)-- node[near start,above]{$\Gamma_1(\xi)$}(3,0.05);
\draw[dashed, very thick] (0,-5)--(0,-2)  (0,2)-- node[near start,right]{$\Gamma_2(\xi)$}(0,5);
\draw[dotted, very thick] (-7,-0.05)--(-1.5,-0.05);
\draw[dotted, very thick] (1.5,-0.05)-- node[near end,below]{$\Gamma_3(\xi)$} (7,-0.05);
\end{tikzpicture}
\label{fig: gamma C2a}
}
\subfigure[]{
\begin{tikzpicture}[scale=0.35]
\draw[help lines,->] (-7,0)--(7,0) node[right]{$\mathbb R$};
\draw[help lines,->] (0,-5)--(0,5) node[above]{$i \mathbb R$};
\draw (0,0) node[below left] {0};
\draw[very thick] (-5,0.05)-- node[near start,above]{$\Gamma_1(\xi)$}(-1,0.05) (1,0.05)--(5,0.05);
\draw[dashed, very thick] (0,-5)--(0,0)  (0,0)-- node[near end,right]{$\Gamma_2(\xi)$}(0,5);
\draw[dotted, very thick] (-7,-0.05)--(-2.5,-0.05);
\draw[dotted, very thick] (2.5,-0.05)-- node[near end,below]{$\Gamma_3(\xi)$} (7,-0.05);
\end{tikzpicture}
\label{fig: gamma C2b}}
\subfigure[]{
\begin{tikzpicture}[scale=0.35]
\draw[help lines,->] (-7,0)--(7,0) node[right]{$\mathbb R$};
\draw[help lines,->] (0,-5)--(0,5) node[above]{$i \mathbb R$};
\draw (0,0) node[below left] {0};
\draw[very thick] (-3,0.05)-- node[near start,above]{$\Gamma_1(\xi)$}(-1,0.05) (1,0.05)--(3,0.05);
\draw[dashed, very thick] (0,-5)--(0,-2)  (0,2)-- node[near start,right]{$\Gamma_2(\xi)$}(0,5);
\draw[dotted, very thick] (-7,-0.05)--(0,-0.05);
\draw[dotted, very thick] (0,-0.05)-- node[near end,below]{$\Gamma_3(\xi)$} (7,-0.05);
\end{tikzpicture}
\label{fig: gamma C2c}}  \caption{The sets $\Gamma_1(\xi)$ (plain), $\Gamma_2(\xi)$ (dashed), and $\Gamma_3(\xi)$ (dotted)
for $(t,\xi) \in C_{2a}$ (a), $(t,\xi) \in C_{2b}$ (b), and $(t,\xi) \in C_{2c}$ (c).} \label{fig: gamma 2cc}
\end{figure}
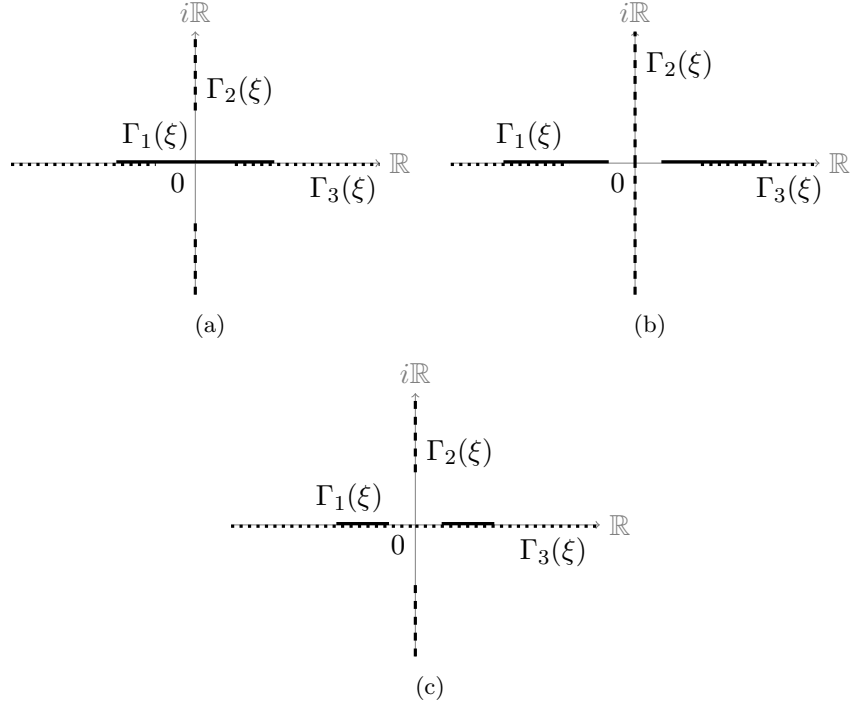

By Theorem \ref{th: DuKuToeplitz} we know that the measures
$(\widehat \mu_1^\xi,\widehat \mu_2^\xi,\widehat \mu_3^\xi)$ are
characterized by a vector equilibrium problem.
The transformed measures $(\mu_1^{\xi}, \mu_2^{\xi}, \mu_3^{\xi})$
from \eqref{eq: muj 2cc}  then also are characterized by
a vector equilibrium problem, as stated in the following theorem.

\begin{theorem} \label{th: DK 2cc}

\begin{itemize}
\item[\rm (a)] The vector of measures $(\mu_1^\xi, \mu_2^\xi, \mu_3^\xi)$ given in \eqref{eq: muj 2cc}
is the unique minimizer for the functional
\[
    E_0(\rho_1,\rho_2,\rho_3) = I(\rho_1)-I(\rho_1,\rho_2)+I(\rho_2)-I(\rho_2,\rho_3)+I(\rho_3),
\]
 among all vectors $(\rho_1, \rho_2,\rho_3)$ of positive measures with
 finite logarithmic energy $I(\rho_j) < \infty$, satisfying
\begin{itemize}
\item[\rm (i)] $\supp (\rho_j) \subset \Gamma_j(\xi)$, for $j=1,2,3$, and
\item[\rm (ii)] $\rho_1(\Gamma_1(\xi))= 1$, $\rho_2(\Gamma_2(\xi)) = 2/3$, and $\rho_3(\Gamma_3(\xi)) = 1/3$.
\end{itemize}
\item[\rm (b)]
The measures $\mu_1^\xi,\mu_2^\xi,\mu_3^\xi$ satisfy for some constant $\ell^\xi$
\begin{align}
\ell^\xi-2U^{ \mu_{1}^\xi}(z)+U^{\mu_{2}^\xi}(z) &= \frac12 \log \left| \frac{w_1(z;\xi)}{w_2(z;\xi)}\right|,  \label{eq: EL 1 2cc}\\
U^{\mu_{1}^\xi}(z)-2U^{\mu_{2}^\xi}(z)+U^{\mu_{3}^\xi}(z) &= \frac12 \log \left| \frac{w_2(z;\xi)}{w_3(z;\xi)}\right|, \label{eq: EL 2 2cc} \\
U^{\mu_{2}^\xi}(z)-2U^{\mu_{3}^\xi}(z) &= \frac12 \log \left| \frac{w_3(z;\xi)}{w_4(z;\xi)}\right|, \label{eq: EL 3 2cc}
\end{align}
\end{itemize}
\end{theorem}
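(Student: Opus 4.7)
The plan is to deduce Theorem \ref{th: DK 2cc} from Theorem \ref{th: DuKuToeplitz} (applied to the hatted triple $(\widehat \mu_1^\xi,\widehat \mu_2^\xi,\widehat \mu_3^\xi)$) by pushing forward through the quadratic map $z \mapsto z^2$. Two ingredients are needed: the identification of $\mu_j^\xi$ as a symmetric pullback of $\widehat \mu_j^\xi$, and the resulting factor-$\tfrac12$ identity relating the logarithmic potentials and energies before and after squaring.

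First I would observe that the measures $\mu_j^\xi$ defined by \eqref{eq: muj 2cc} are invariant under $z \mapsto -z$: their densities depend only on $x^2$, and each contour $\Gamma_j(\xi)$ is itself symmetric. The change of variable $y=x^2$ gives $|\ud y| = 2|x|\,|\ud x|$, so a short computation from \eqref{eq: muj 2cc} shows that the pushforward of $\mu_j^\xi$ under squaring is exactly $\widehat \mu_j^\xi$. In particular $\mu_j^\xi$ has the total masses $1,2/3,1/3$ claimed in the statement. Next, for any $z\in\C$ and $x \in \R \cup i\R$ one has the factorization $|z^2-x^2| = |z-x|\,|z+x|$. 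Combined with the symmetry of $\mu_j^\xi$ (applied to the $-x$ argument), this gives
\begin{equation*}
  U^{\mu_j^\xi}(z) \;=\; \frac12\int \log \frac{1}{|z^2-x^2|}\,\ud\mu_j^\xi(x) \;=\; \frac12\, U^{\widehat \mu_j^\xi}(z^2),
\end{equation*}
and the analogous identity $I(\mu_j^\xi,\mu_k^\xi) = \tfrac12\, I(\widehat \mu_j^\xi,\widehat \mu_k^\xi)$ for all $j,k \in \{1,2,3\}$ (the case $j=k$ uses symmetry in both variables, applied as $(x,y)\mapsto(x,-y)$).

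Part (b) then follows at once: evaluating the Euler--Lagrange identities of Theorem \ref{th: DuKuToeplitz}(b) for the hatted triple at the point $z^2$, and using $\widehat w_j(z^2;\xi) = w_j(z;\xi)$ from \eqref{eq: wj 2cc} together with the potential identity above, one obtains \eqref{eq: EL 1 2cc}--\eqref{eq: EL 3 2cc} after dividing by $2$ and setting $\ell^\xi = \ell/2$.

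For part (a), the interaction matrix $\bigl(\begin{smallmatrix}2 & -1 & 0\\ -1 & 2 & -1\\ 0 & -1 & 2\end{smallmatrix}\bigr)$ is positive definite (it is the $A_3$ Cartan matrix), so $E_0$ is strictly convex on the admissible set of triples of measures with finite logarithmic energy on $\Gamma_1(\xi),\Gamma_2(\xi),\Gamma_3(\xi)$ and the prescribed masses; together with lower semicontinuity this yields existence and uniqueness of a minimizer, characterized by its Euler--Lagrange conditions. The right-hand side of the $j$th EL identity derived in (b) vanishes on $\Gamma_j(\xi)$ (where $|w_j|=|w_{j+1}|$) and is strictly positive off $\Gamma_j(\xi)$ (by the ordering \eqref{eq: order 1} of the $w_j$, transported through $z\mapsto z^2$), which is exactly the usual equality-on-support/inequality-off-support variational characterization. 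Together with the support and mass conditions already verified, this identifies $(\mu_1^\xi,\mu_2^\xi,\mu_3^\xi)$ as the unique minimizer. The main obstacle I anticipate is not conceptual but bookkeeping: verifying the pushforward cleanly for $j=2$, where $\Gamma_2(\xi)\subset i\R$ while $\widehat\Gamma_2(\xi)\subset(-\infty,0]$, and tracking the factors of $\tfrac12$ consistently throughout the potentials, the mutual energies, and the constant $\ell^\xi$.
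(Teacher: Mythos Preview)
Your approach is correct and coincides with the paper's: both derive the key identity $U^{\mu_j^\xi}(z)=\tfrac12\,U^{\widehat\mu_j^\xi}(z^2)$ from the symmetry of the pullback measures \eqref{eq: muj 2cc}, and then read off \eqref{eq: EL 1 2cc}--\eqref{eq: EL 3 2cc} by evaluating the Euler--Lagrange relations \eqref{eq: EL 1}--\eqref{eq: EL 3} for the hatted triple at $z^2$ and invoking \eqref{eq: wj 2cc}. The paper's write-up is terser (it records only the potential identity and the substitution), whereas you additionally spell out the energy identity $I(\mu_j^\xi,\mu_k^\xi)=\tfrac12 I(\widehat\mu_j^\xi,\widehat\mu_k^\xi)$ and an explicit convexity/variational argument for part~(a); this is extra detail rather than a different method.
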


\begin{proof}
It follows from \eqref{eq: logarithmic potential} and \eqref{eq: muj 2cc} that
\begin{equation*}
U^{\mu_j^\xi}(z)=\frac12 U^{\widehat \mu_j^\xi}(z^2).
\label{eq: U hat}
\end{equation*}
Applying this to the variational conditions \eqref{eq: EL 1}--\eqref{eq: EL 3} in the context of the symbol $\widehat s_2$
and using \eqref{eq: wj 2cc} establishes \eqref{eq: EL 1 2cc}--\eqref{eq: EL 3 2cc}.
\end{proof}

\section{Proof of Theorem \ref{th: zero distribution}} \label{sec: zero distribution}

We now give the proof of Theorem \ref{th: zero distribution},
which is based on Theorem \ref{th: KR}. For fixed $t$, we define the
measure $\nu_1$ as an average of the measures $\mu_1^\xi$
\begin{equation}
    \nu_1 = \int_0^1 \mu_1^\xi \ud \xi.
    \label{eq: nu1}
\end{equation}
This measure will be the limiting normalized zero distribution of the
polynomials $p_{n,n}$ as $n \to \infty$. Note that the measure
$\mu_1^\xi$ is  defined by \eqref{eq: muj} in the one-cut case (i.e., $\xi > \xicr$),
and by \eqref{eq: muj 2cc} in the two-cut case.

\subsection{Proof of Theorem \ref{th: zero distribution} for $t \geq \tau^2$}

We first prove Theorem \ref{th: zero distribution} for $t \geq \tau^2$.
This is the simplest case, since we deal with the one-cut case for
every $\xi > 0$ and we can apply Theorem \ref{th: KR} almost immediately.

\begin{proof} Let $t \geq \tau^2$.
Conditions (a) and (d) of Theorem \ref{th: KR} follow from Lemma~\ref{lemma:relation coefficients}
and Theorem \ref{th:asympt recurrence coefficients}.
The interlacing condition (b) follows from Theorem \ref{th: interlacing}~(a),
and condition (e) is contained in Theorem \ref{th: gamma 1cc}.
So in order to be able to apply Theorem \ref{th: KR}
it remains to establish condition~(c). This will be done in the following lemma.

Having verified all conditions, Theorem \ref{th: KR} can be applied
and Theorem \ref{th: zero distribution} follows for $t \geq \tau^2$.
\end{proof}

To complete the preceding proof we still have to establish the following lemma.
It states that the recurrence coefficients $b_{k,n}$ and $c_{k,n}$ remain bounded
in case $k/n$ is bounded.

\begin{lemma} \label{lemma: bounded zeros}
Let $t \in \mathbb R$. Then the two sets of recurrence coefficients
$\{ b_{k,n} \mid k+1 \leq n \}$ and $\{ c_{k,n} \mid k+1 \leq n \}$ are bounded.
\end{lemma}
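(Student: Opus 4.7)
The plan is to reduce everything to a uniform bound on the coefficients $a_{k,n}$ via the explicit relations in Lemma~\ref{lemma:relation coefficients}. Indeed, \eqref{eq: ba} gives $b_{k,n}=\tau^{2}a_{k,n}+k/n$, so a bound $a_{k,n}\le M$ together with $k+1\le n$ immediately yields $b_{k,n}\le\tau^{2}M+1$, and \eqref{eq: ca} similarly yields $c_{k,n}\le\tau^{2}M^{3}$. Thus the whole lemma comes down to showing that $\{a_{k,n}: 1\le k\le n-1,\ n\ge 1\}$ is bounded.

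For this, the key is to read \eqref{eq: ba} as a Freud-type relation for the recurrence coefficients of the orthogonal polynomials $(q_{k,n})$ with respect to the weight \eqref{eq: weight}. Eliminating $b_{k,n}$ from the two expressions in \eqref{eq: ba} gives, for every $k\ge 1$,
\begin{equation*}
    a_{k,n}\bigl(a_{k-1,n}+a_{k,n}+a_{k+1,n}\bigr) \;=\; (\tau^{2}-t)\,a_{k,n}+\frac{k}{n}.
\end{equation*}
All $a_{j,n}$ are strictly positive for $j\ge 1$ (and $a_{0,n}=0$), so on the left we may drop the nonnegative contributions $a_{k-1,n}$ and $a_{k+1,n}$ to obtain $a_{k,n}^{2}\le (\tau^{2}-t)\,a_{k,n}+k/n$. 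Using $k/n\le 1$ and replacing $\tau^{2}-t$ by its positive part to handle both signs, one finds
\begin{equation*}
    a_{k,n}^{2}-(\tau^{2}-t)_{+}\,a_{k,n}-1\;\le\;0,
\end{equation*}
which forces $a_{k,n}\le M$ with
$M=\tfrac12\bigl((\tau^{2}-t)_{+}+\sqrt{(\tau^{2}-t)_{+}^{2}+4}\bigr)$, independent of $k$ and $n$. Combined with the first paragraph, this proves the lemma.

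There is essentially no serious obstacle: the only small subtlety is that the Freud relation has a sign-indefinite linear term, so one has to keep track of whether $\tau^{2}-t$ is positive or negative in order to get a useful quadratic inequality. Writing $(\tau^{2}-t)_{+}$ removes that issue uniformly in $t\in\mathbb R$. The positivity of the $a_{j,n}$ (established in Lemma~\ref{lemma:relation coefficients}) is what makes dropping the neighbouring terms legitimate, and this is the crucial structural ingredient that prevents the chain of three-term averages from propagating errors.
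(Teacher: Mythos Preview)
Your proof is correct and in fact more direct than the paper's own argument. You exploit the Freud-type string equation hidden in \eqref{eq: ba} to derive the quadratic inequality $a_{k,n}^{2}\le(\tau^{2}-t)a_{k,n}+k/n$, and then solve it. The paper instead takes an indirect spectral route: it invokes the Bleher--Its asymptotics \cite{BI1,BI2} to bound the zeros of the diagonal polynomials $q_{n,n}$, uses the interlacing Theorem~\ref{th: interlacing} to propagate that bound to all $q_{k,n}$ with $k\le n$, and then applies Cauchy interlacing for principal submatrices of the symmetric Jacobi matrix to conclude $a_{k-1,n}\le M^{2}$. Your argument is self-contained (no appeal to \cite{BI1,BI2} or to Theorem~\ref{th: interlacing}) and even yields an explicit bound; the paper's approach, on the other hand, would survive if the exact string equation were not available, relying only on the qualitative fact that the zeros of $q_{n,n}$ stay in a fixed compact set.
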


\begin{proof}
By \eqref{eq: ba} and \eqref{eq: ca} it is sufficient to proof that the set
\[
\{ a_{k,n} \mid k+1 \leq n \}
\]
is bounded. From the asymptotics of the orthogonal polynomials $q_{n,n}$,
see \cite{BI1,BI2}, it follows that there exists $M>0$ such
that the zeros of the diagonal polynomials $q_{n,n}$, $n=1,2,3,\ldots$,
belong to $[-M,M]$. Then, by interlacing, see Theorem \ref{th: interlacing},
the zeros of all $q_{k,n}$ with $k \leq n$ belong to $[-M,M]$.
Note that the zeros of $q_{k,n}$ coincide with the eigenvalues of the tridiagonal Jacobi matrix
\[ J_{k,n}=
    \begin{pmatrix}
    0 & \sqrt{a_{1,n}} &&& \\
    \sqrt{a_{1,n}} & 0 & \sqrt{a_{2,n}} && \\
    & \sqrt{a_{2,n}} & 0 & \ddots & \\
    && \ddots & \ddots & \sqrt{a_{k-1,n}} \\
    &&& \sqrt{a_{k-1,n}} & 0
\end{pmatrix}.
\]
It is well-known that the eigenvalues of a real symmetric matrix $X$ interlace
with the eigenvalues of the matrix obtained from $X$ by deleting the first
row and column. Applying this $k-2$ times, we find that that the eigenvalues of
\[ \begin{pmatrix} 0 & \sqrt{a_{k-1,n}} \\ \sqrt{a_{k-1,n}} & 0 \end{pmatrix} \]
are in $[-M,M]$, which implies that $a_{k-1,n} \leq M^2$ for every $k \leq n$.
This proves the lemma.
\end{proof}

\subsection{Proof of Theorem \ref{th: zero distribution} for $t < \tau^2$}

The situation for $t < \tau^2$ is  more complicated.
For small values of $\xi$, namely $\xi < \xicr$, we are in the two-cut case,
while for $\xi > \xicr$ we deal with the one-cut case.

The two-cut case was handled in Section \ref{sec: 2cc} by doubling the recurrence relation.
This led to the symbol $\widehat{s}_2$ and related notions.
It will be convenient to double the recurrence relation also in the one-cut case.
The doubled recurrence relation is \eqref{eq: iterated rr}, and so
for $\xi > \xicr$, we define
\begin{align}
A(\xi)&=\lim_{k/n \to \xi}A_{k,n}=2b(\xi), \label{eq: A 1cc} \\
B(\xi)&=\lim_{k/n \to \xi}B_{k,n}=2c(\xi)+b(\xi)^2,  \label{eq: B 1cc}\\
C(\xi)&=\lim_{k/n \to \xi}C_{k,n}=2b(\xi)c(\xi),  \label{eq: C 1cc}\\
D(\xi)&=\lim_{k/n \to \xi}D_{k,n}=c(\xi)^2,  \label{eq: D 1cc}
\end{align}
and
\begin{equation*}
\widehat s_1 (w;\xi)=w+A(\xi)+\frac{B(\xi)}{w}+\frac{C(\xi)}{w^2}+\frac{D(\xi)}{w^3}.
\label{eq: s1 hat}
\end{equation*}
The subscript $1$ is added to remind ourselves that we are in the one-cut case.
The hat refers to the fact that the recurrence relation was doubled to obtain this symbol.

As before, we consider the algebraic equation $\widehat s_1(w;\xi)=z$ for complex $z$ and
denote its solutions by $\widehat w_j(z;\xi)$, $j=1,2,3,4$, ordered such that
\[
    |\widehat w_1(z;\xi)| \geq |\widehat w_2(z;\xi)| \geq |\widehat w_3(z;\xi)| \geq| \widehat w_4(z;\xi)|.
\]
Furthermore, we define for $\xi > \xicr$
\[
    \widehat \Gamma_1(\xi) = \{z \in \mathbb C \mid |\widehat w_1(z;\xi)| = |\widehat w_{2}(z;\xi)|\},
\]
and
\[
\ud \widehat{\mu}_1^{\xi} (z) = \frac{1}{2\pi i}
    \left( \frac{{\widehat w_{1-}'}(z)}{{\widehat w_{1-}}(z)} - \frac{{\widehat w_{1+}'}(z)}{{\widehat w_{1+}}(z)}\right) \ud z,
\]
for $z \in \widehat \Gamma_1(\xi)$. The following lemma establishes the link between these
notions and the corresponding ones related to the original symbol $s_1$.

\begin{lemma} \label{lemma: relation 1cc hat}
Let $\xi > \xicr$. Then, $z \in \Gamma_1(\xi)$ if and only if $z^2 \in \widehat \Gamma_1(\xi)$. Moreover,
\begin{equation}
\ud \mu_1^\xi(x) =|x|\frac{\mathrm d \widehat{\mu}_1^\xi}{\mathrm d x}(x^2) \ud x, \quad x \in \Gamma_1(\xi). \label{eq: mu hat 1 1cc} \\
\end{equation}
\end{lemma}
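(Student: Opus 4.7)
The plan is to reduce everything to the analogue of the identity \eqref{eq: s1 s2}, but now for the one-cut setting. Concretely, I would first establish that
\begin{equation*}
    \widehat s_1(w^2;\xi) = s_1(w;\xi)^2, \qquad w \in \C\setminus\{0\},
\end{equation*}
by direct computation: expanding $s_1(w;\xi)^2$ with \eqref{eq: s1} gives a Laurent polynomial in $w^2$ with coefficients $2b(\xi)$, $2c(\xi)+b(\xi)^2$, $2b(\xi)c(\xi)$, $c(\xi)^2$, which are exactly $A(\xi), B(\xi), C(\xi), D(\xi)$ of \eqref{eq: A 1cc}--\eqref{eq: D 1cc}. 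This is a one-line check.

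Next I would use this identity to relate the algebraic equations. If $s_1(w;\xi)=\pm z$ then $w^2$ solves $\widehat s_1(W;\xi)=z^2$. Since $s_1(\cdot;\xi)$ is odd in $w$, the four solutions $w_j(z;\xi)$ of $s_1(w;\xi)=z$ and the four solutions of $s_1(w;\xi)=-z$ pair up as $\{w, -w\}$, so they produce exactly the four values $\{w_j(z;\xi)^2 : j = 1,2,3,4\}$, which must coincide with the four roots $\widehat w_j(z^2;\xi)$ of $\widehat s_1(W;\xi)=z^2$. Since $|w_j(z;\xi)^2|=|w_j(z;\xi)|^2$, the ordering by absolute value is preserved, so $\widehat w_j(z^2;\xi) = w_j(z;\xi)^2$ for $j=1,\dots,4$. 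In particular, $|\widehat w_1(z^2;\xi)| = |\widehat w_2(z^2;\xi)|$ iff $|w_1(z;\xi)| = |w_2(z;\xi)|$, proving $z \in \Gamma_1(\xi) \Leftrightarrow z^2 \in \widehat\Gamma_1(\xi)$.

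For the density identity \eqref{eq: mu hat 1 1cc}, I would differentiate $\widehat w_1(x^2;\xi) = w_1(x;\xi)^2$ with respect to $x$ to obtain
\begin{equation*}
    \frac{\widehat w_1'(x^2;\xi)}{\widehat w_1(x^2;\xi)} = \frac{1}{x}\cdot \frac{w_1'(x;\xi)}{w_1(x;\xi)}.
\end{equation*}
For $x \in \Gamma_1(\xi)\cap(0,\infty)$, the map $x\mapsto x^2$ is orientation-preserving and takes the $\pm$ sides of $\Gamma_1(\xi)$ to the $\pm$ sides of $\widehat\Gamma_1(\xi)$, so inserting the boundary values into \eqref{eq: muj 1cc} (applied once for the one-cut case of $s_1$ and once for $\widehat s_1$) yields
\begin{equation*}
    \frac{d\widehat\mu_1^\xi}{dX}(x^2) = \frac{1}{x}\cdot\frac{d\mu_1^\xi}{dx}(x), \qquad x > 0.
\end{equation*}
For $x<0$ I would invoke the symmetry $w_1(-x;\xi) = -w_1(x;\xi)$ (coming from $s_1(-w)=-s_1(w)$), which gives $d\mu_1^\xi(-x)=d\mu_1^\xi(x)$, so that the same relation holds with $1/x$ replaced by $1/|x|$. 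Rearranging gives exactly \eqref{eq: mu hat 1 1cc}.

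The main obstacle I anticipate is bookkeeping: making sure that $\widehat w_1$ is indeed globally given by $w_1^2$ (not by $w_2^2$ or by different branches on different arcs) and that $\pm$-side conventions are consistent across the map $x\mapsto x^2$. The ordering argument above handles the first issue globally, and on $\Gamma_1(\xi)=[-\alpha(\xi),\alpha(\xi)]\subset\R$ the map $x\mapsto x^2$ is a local diffeomorphism away from $0$, which handles the second. The behavior at $x=0$ is harmless since the factor $|x|$ absorbs any bounded singularity of $\frac{d\widehat\mu_1^\xi}{dX}(x^2)$ at the endpoint $X=0$.
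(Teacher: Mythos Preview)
Your proposal is correct and is exactly the kind of direct verification the paper has in mind; the paper itself gives no argument, simply calling the lemma ``straightforward'' and omitting all details. The factorization $\widehat s_1(w^2;\xi)=s_1(w;\xi)^2$ together with the resulting identification $\widehat w_j(z^2;\xi)=w_j(z;\xi)^2$ (ordering preserved under squaring of moduli) is precisely the mechanism, and your handling of the $\pm$-side bookkeeping and the symmetry for $x<0$ is clean.
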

\begin{proof}
The proof of this lemma is straightforward. We omit the details.
\end{proof}

We now prove Theorem \ref{th: zero distribution} if $t<\tau^2$.

\begin{proof}[Proof of Theorem \ref{th: zero distribution} for $t< \tau^2$.]
Let $t < \tau^2$.

For even $k = 2l$, we have that $p_{k,n}$ is an  even polynomial and
we can write
\begin{equation}
    p_{k,n}(x)=r_{l,n}(x^2),
    \label{eq: p r even}
\end{equation}
where  $r_{l,n}$ is a monic polynomial of degree $l$.
Rewriting the doubled recurrence relation \ref{eq: iterated rr} in terms of the polynomials $r_{l,n}$ yields
\[
    xr_{l,n}(x) = r_{l+1,n}(x) + \tilde A_{l,n}r_{l,n}(x)+\tilde B_{l,n}r_{l-1,n}(x)
    +\tilde C_{l,n}r_{l-2,n}(x)+\tilde D_{l,n}r_{l-3,n}(x),
\]
where
\[
\tilde A_{l,n}=A_{2l,n}, \quad \tilde B_{l,n}=B_{2l,n}, \quad \tilde C_{l,n}=C_{2l,n}, \quad \text{ and } \quad \tilde D_{l,n}=D_{2l,n}.
\]
Therefore, the polynomials $r_{l,n}$ satisfy condition (a) of Theorem \ref{th: KR}.

We have for $\xi >0$
\begin{align*}
\lim_{k/n \to \xi}A_{k,n}&=A(\xi), & \lim_{k/n \to \xi}B_{k,n}&=B(\xi),\\
    \lim_{k/n \to \xi}C_{k,n}&=C(\xi), & \lim_{k/n \to \xi}D_{k,n}&=D(\xi),
\end{align*}
where $A(\xi),B(\xi),C(\xi)$, and $D(\xi)$ are defined by \eqref{eq: A 1cc}--\eqref{eq: D 1cc}
if $\xi > \xicr$ and by \eqref{eq: A 2cc}--\eqref{eq: D 2cc} if $\xi < \xicr$. It follows that
\begin{align*}
\lim_{l/n \to \xi}\tilde A_{l,n}&=A(2\xi), & \lim_{l/n \to \xi}\tilde B_{l,n}&=B(2\xi), \\
   \lim_{l/n \to \xi}\tilde C_{l,n}&=C(2\xi), & \lim_{l/n \to \xi}\tilde D_{l,n}&=D(2\xi).
\end{align*}
This establishes the condition (d) of Theorem \ref{th: KR}.
Condition (c) follows from Lemma \ref{lemma: bounded zeros} and
(b) from Theorem \ref{th: interlacing}(b).
For condition (e) we need that $\widehat \Gamma_1(2\xi) \subset \R$.
If $2\xi > \xicr$, this is guaranteed by Lemma \ref{lemma: relation 1cc hat} and
Theorem \ref{th: gamma 1cc}. If $2\xi \in (0, \xicr)$, this follows from Theorem \ref{th: gamma 2cc}. Applying Theorem \ref{th: KR}, we obtain
\[
    \lim_{l/n \to 1/2} \nu(r_{l,n})=2 \int_0^{1/2} \widehat \mu_1^{2\xi} \ud \xi=
     \int_0^1 \widehat \mu_1^\xi \ud \xi.
\]
Then by \eqref{eq: p r even}, \eqref{eq: muj 2cc}, and \eqref{eq: mu hat 1 1cc}, we obtain
\[
    \lim_{\substack{n \to \infty \\ n \text{ even}}} \nu(p_{n,n}) = \int_0^1 \mu_1^\xi \ud \xi,
        = \nu_1
\]
by the definition of $\nu_1$.
In a similar way one finds the same limit $\nu_1$ for the subsequence of odd $n$, completing the proof of Theorem \ref{th: zero distribution}. Alternatively, this follows from the interlacing of zeros.
\end{proof}

\section{Proof of Theorem \ref{th: equilibrium problem}} \label{sec: integrating}

\subsection{Averaging the vector equilibrium problems}

In this section we prove that the limiting zero distribution
$\nu_1$ can be characterized by a vector equilibrium problem for
three measures with external fields acting on the first and the
third measure, and a constraint acting on the second measure, see
Theorem \ref{th: integrating}. Recall that $\nu_1$ is given by \eqref{eq: nu1}.
Similarly we define the measures $\nu_2$ and $\nu_3$ by
\begin{equation}
\nu_j  = \int_0^{1} \mu_j^\xi \ud \xi, \qquad j=2,3.  \label{eq: nu 23}
\end{equation}

It follows from Theorem \ref{th: gamma 1cc} and Theorem \ref{th: gamma 2cc} that
the supports of $\mu_j^{\xi}$ as a function of $\xi$
are increasing if $j=1,3$ and decreasing if $j=2$. Hence,
\[
\supp(\nu_j) = \bigcup_{0 < \xi \leq 1} \supp(\mu_j^{\xi})
    = \begin{cases}
        \Gamma_1(1) \subset \mathbb R, & \text{for } j=1, \\
        \Gamma_2(0+) \subset i \mathbb R, & \text{for } j = 2, \\
        \Gamma_3(1) \subset \mathbb R, & \text{for } j = 3.
    \end{cases}
\]
We also know that $\nu_1(\R) = 1$, $\nu_2(i \R) = 2/3$, $\nu_3 (\R) = 1/3$.

The vector of measures $(\nu_1,\nu_2,\nu_3)$ is characterized by a vector equilibrium problem.

\begin{theorem} \label{th: integrating}
Define
\begin{align}
    \widetilde V_1(x) &= \frac12 \int_{0}^{\xicr}  \log \left| \frac{w_1(x;\xi)}{w_2(x;\xi)}\right| \ud \xi + \int_{\xicr}^{\infty} \log \left| \frac{w_1(x;\xi)}{w_2(x;\xi)}\right| \ud \xi, \quad  x \in \R, \label{eq: V1 def}\\
    \widetilde \sigma &= \int_0^{\infty} \mu_2^\xi \ud \xi, \label{eq: sigma def}\\
    \widetilde V_3(x) &= \frac12 \int_{0}^{\xicr}  \log \left| \frac{w_3(x;\xi)}{w_4(x;\xi)}\right| \ud \xi + \int_{\xicr}^{\infty} \log \left| \frac{w_3(x;\xi)}{w_4(x;\xi)}\right| \ud \xi, \quad  x \in \R. \label{eq: V3 def}
\end{align}
Then, $(\nu_1,\nu_2,\nu_3)$ is the unique vector of measures minimizing the energy functional
\begin{multline*}
    E(\rho_1,\rho_2,\rho_3)= I(\rho_1)-I(\rho_1,\rho_2) +
    I(\rho_2)-I(\rho_2,\rho_3) + I(\rho_3) \\
    +\int \widetilde V_1(x) \ud \rho_1(x)
    + \int \widetilde V_3(x) \ud \rho_3(x), \label{eq: energy functional0}
\end{multline*}
among all measures $\rho_1$, $\rho_2$ and $\rho_3$ satisfying
\begin{itemize}
\item[\rm (a)] $\rho_1$ is supported on $\R$ and $\rho_1(\R)=1$,
\item[\rm (b)] $\rho_2$ is supported on $i\R$ and $\rho_2(i\R)=2/3$,
\item[\rm (c)] $\rho_3$ is supported on $\R$ and $\rho_3(\R)=1/3$,
\item[\rm (d)] $\rho_2$ satisfies the constraint $\rho_2 \leq \widetilde \sigma$.
\end{itemize}
\end{theorem}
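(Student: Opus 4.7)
The plan is to verify that $(\nu_1,\nu_2,\nu_3)$ satisfies the Euler--Lagrange variational conditions for the minimization problem in Theorem \ref{th: integrating}, and then to conclude by uniqueness of the minimizer, which follows from the strict convexity of $E$ on the admissible class by standard potential-theoretic arguments. The conditions read: for constants $\ell_1,\ell_3$, and with $\ell_2=0$ since $i\R$ is unbounded (cf.\ Theorem \ref{th: DuKuToeplitz}),
\begin{align*}
2U^{\nu_1}(x)-U^{\nu_2}(x)+\widetilde V_1(x) &= \ell_1, \quad x\in\supp(\nu_1), \\
-U^{\nu_2}(x)+2U^{\nu_3}(x)+\widetilde V_3(x) &= \ell_3, \quad x\in\supp(\nu_3),
\end{align*}
each with a corresponding $\geq \ell_j$ inequality on the rest of $\R$, together with the constraint condition
\begin{equation*}
-U^{\nu_1}(z)+2U^{\nu_2}(z)-U^{\nu_3}(z) \begin{cases} = 0 & \text{where } 0<\nu_2<\widetilde\sigma, \\ \leq 0 & \text{where } \nu_2=\widetilde\sigma, \\ \geq 0 & \text{where } \nu_2=0. \end{cases}
\end{equation*}

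My first step is to integrate the pointwise identities \eqref{eq: EL 1} and \eqref{eq: EL 3} from Theorem \ref{th: DuKuToeplitz} (one-cut, $\xi>\xicr$) together with \eqref{eq: EL 1 2cc} and \eqref{eq: EL 3 2cc} from Theorem \ref{th: DK 2cc} (two-cut, $\xi<\xicr$) over $\xi\in(0,1)$. Writing $\varepsilon(\xi)=1$ for $\xi>\xicr$ and $\varepsilon(\xi)=1/2$ for $\xi<\xicr$ to unify the two cases, and recognizing the definition \eqref{eq: V1 def} of $\widetilde V_1$, this yields
\begin{equation*}
2U^{\nu_1}(z)-U^{\nu_2}(z)+\widetilde V_1(z) = L_1 + \int_1^\infty \varepsilon(\xi)\log\left|\frac{w_1(z;\xi)}{w_2(z;\xi)}\right|\ud\xi,
\end{equation*}
with $L_1=\int_0^1\ell^\xi\,\ud\xi$, and an analogous identity for $2U^{\nu_3}(z)-U^{\nu_2}(z)+\widetilde V_3(z)$ in which the constant vanishes since $\ell_3^\xi=0$.

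The crucial ingredient is the \emph{monotonicity of the sets $\Gamma_j(\xi)$ in $\xi\in(0,\infty)$}: by Theorems \ref{th: gamma 1cc} and \ref{th: gamma 2cc} together with the reformulation in Section \ref{subsec: reformulation 2cc}, $\Gamma_1(\xi)$ and $\Gamma_3(\xi)$ are increasing and $\Gamma_2(\xi)$ is decreasing as functions of $\xi$. Hence for $z\in\supp(\nu_1)=\Gamma_1(1)$ and every $\xi\geq 1$ one has $z\in\Gamma_1(\xi)$, so $|w_1(z;\xi)|=|w_2(z;\xi)|$ and the tail integrand vanishes; this establishes the first Euler--Lagrange equality with $\ell_1=L_1$. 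For $z\in\R\setminus\supp(\nu_1)$ the ordering $|w_1(z;\xi)|\geq|w_2(z;\xi)|$ forces the tail integrand to be nonnegative, giving the desired $\geq\ell_1$ inequality. The condition for $\nu_3$ follows by the same mechanism using the monotonicity of $\Gamma_3(\xi)$.

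For the middle condition, integrating \eqref{eq: EL 2}/\eqref{eq: EL 2 2cc} over $\xi\in(0,\infty)$ yields
\begin{equation*}
-U^{\nu_1}(z)+2U^{\widetilde\sigma}(z)-U^{\nu_3}(z) = -\int_0^\infty\varepsilon(\xi)\log\left|\frac{w_2(z;\xi)}{w_3(z;\xi)}\right|\ud\xi,
\end{equation*}
and subtracting $U^{\widetilde\sigma-\nu_2}$ expresses $-U^{\nu_1}+2U^{\nu_2}-U^{\nu_3}$ as an integral whose sign in each region is controlled by the ordering $|w_2(z;\xi)|\geq|w_3(z;\xi)|$ (with equality precisely on $\Gamma_2(\xi)$). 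I expect the main obstacle to be this three-sided case analysis: verifying the correct sign on each of the regions $\{\nu_2=0\}$, $\{0<\nu_2<\widetilde\sigma\}$ and $\{\nu_2=\widetilde\sigma\}$, whose locations along $i\R$ are dictated by the monotonicity of $\gamma(\xi)$ from Section \ref{subsec: reformulation 2cc}. A secondary technical point is to justify convergence of the improper integrals defining $\widetilde V_1$, $\widetilde V_3$ and $\widetilde\sigma$ as $\xi\to\infty$, which should follow from $a(\xi)\sim\sqrt\xi$ (cf.\ \eqref{eq: a}) and the consequent decay of the various $\log|w_j/w_{j+1}|$ for fixed $z$.
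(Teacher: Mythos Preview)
Your overall strategy---verify the Euler--Lagrange conditions by integrating the $\xi$-dependent identities from Theorems \ref{th: DuKuToeplitz} and \ref{th: DK 2cc}, and conclude by strict convexity---is exactly the paper's approach, and your treatment of the first and third conditions (for $\nu_1$ and $\nu_3$) is correct and matches the paper.

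There is, however, a genuine error in your handling of the middle condition. You propose to integrate \eqref{eq: EL 2}/\eqref{eq: EL 2 2cc} over $\xi\in(0,\infty)$ and claim this produces
\[
-U^{\nu_1}(z)+2U^{\widetilde\sigma}(z)-U^{\nu_3}(z) = -\int_0^\infty\varepsilon(\xi)\log\left|\frac{w_2(z;\xi)}{w_3(z;\xi)}\right|\ud\xi.
\]
But $\nu_1=\int_0^1\mu_1^\xi\,\ud\xi$ and $\nu_3=\int_0^1\mu_3^\xi\,\ud\xi$, so integrating the $U^{\mu_1^\xi}$ and $U^{\mu_3^\xi}$ terms over $(0,\infty)$ does \emph{not} give $U^{\nu_1}$ and $U^{\nu_3}$; it gives the potentials of the larger measures $\int_0^\infty\mu_j^\xi\,\ud\xi$. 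Your displayed identity is therefore false, and the subsequent plan to ``subtract $U^{\widetilde\sigma-\nu_2}$'' and run a three-sided case analysis cannot be carried out from it.

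The fix is simpler than what you attempted: integrate \eqref{eq: EL 2}/\eqref{eq: EL 2 2cc} only over $\xi\in(0,1)$, exactly as you did for the other two conditions. This directly yields
\[
U^{\nu_1}(z)-2U^{\nu_2}(z)+U^{\nu_3}(z)=\int_0^1\varepsilon(\xi)\log\left|\frac{w_2(z;\xi)}{w_3(z;\xi)}\right|\ud\xi\ \geq\ 0,
\]
with equality if and only if $z\in\Gamma_2(\xi)$ for every $\xi\in(0,1)$, i.e.\ $z\in\Gamma_2(1)=\supp(\widetilde\sigma-\nu_2)$, by the monotonicity of $\Gamma_2(\xi)$. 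No reference to $\widetilde\sigma$ or to a three-sided analysis is needed. (The three-sided formulation you wrote down collapses here anyway, since $\supp(\nu_2)=\Gamma_2(0+)=\supp(\widetilde\sigma)$, so the region $\{\nu_2=0\}$ is empty within the support of the constraint.) Your concern about convergence of the improper integrals is also moot: for fixed $x$, the integrand $\log|w_1(x;\xi)/w_2(x;\xi)|$ vanishes identically once $\xi$ is large enough that $x\in\Gamma_1(\xi)$, and likewise for the other integrals.
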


\begin{proof}
We have already shown that $(\nu_1, \nu_2, \nu_3)$ satisfies the conditions (a), (b) and (c).
Also (d) holds, because of the definitions \eqref{eq: nu 23} and \eqref{eq: sigma def}.
Since $\Gamma_2(\xi)$ is a set that decreases as $\xi$ increases, we have
\[
\supp (\widetilde \sigma-\nu_2)= \Gamma_2(1).
\]

Since the energy functional is strictly convex it is sufficient to show
that the Euler-Lagrange variational conditions associated with the minimization
problem are satisfied, namely
\begin{equation}
    2U^{\nu_1}(x)-U^{\nu_2}(x)+ \widetilde V_1(x)
    \begin{cases}
    = \ell &  \textrm{for } x \in \supp (\nu_1), \\
    > \ell &  \textrm{for } x \in \R \setminus \supp (\nu_1),
    \end{cases}
\label{eq: el nu 1}
\end{equation}
for some $\ell$,
\begin{equation}
-U^{\nu_1}(z)+2U^{\nu_2}(z)-U^{\nu_3}(z)
\begin{cases}
= 0 &  \textrm{for } z \in \supp (\widetilde \sigma - \nu_2), \\
< 0 &  \textrm{for } z \in i\R \setminus \supp (\widetilde \sigma - \nu_2),
\end{cases}
\label{eq: el nu 2}
\end{equation}
and
\begin{equation}
-U^{\nu_2}(x)+2U^{\nu_3}(x)+ \widetilde V_3(x)
\begin{cases}
= 0 &  \textrm{for } x \in \supp (\nu_3), \\
> 0 &  \textrm{for } x \in \R \setminus \supp (\nu_3).
\end{cases}
\label{eq: el nu 3}
\end{equation}

We establish \eqref{eq: el nu 1}--\eqref{eq: el nu 3} by integrating
the Euler Lagrange variational conditions \eqref{eq: EL 1} -- \eqref{eq: EL 3}
\begin{align}
\ell^\xi-2U^{ \mu_{1}^\xi}(z)+U^{\mu_{2}^\xi}(z) &= \log \left| \frac{w_1(z;\xi)}{w_2(z;\xi)}\right|,  \label{eq: log1 1cc}\\
U^{\mu_{1}^\xi}(z)-2U^{\mu_{2}^\xi}(z)+U^{\mu_{3}^\xi}(z) &= \log \left| \frac{w_2(z;\xi)}{w_3(z;\xi)}\right|, \label{eq: log2 1cc} \\
U^{\mu_{2}^\xi}(z)-2U^{\mu_{3}^\xi}(z) &= \log \left| \frac{w_3(z;\xi)}{w_4(z;\xi)}\right|, \label{eq: log3 1cc}
\end{align}
if $\xi > \xicr$ and \eqref{eq: EL 1 2cc} --\eqref{eq: EL 3 2cc}
\begin{align}
\ell^\xi-2U^{ \mu_{1}^\xi}(z)+U^{\mu_{2}^\xi}(z) &= \frac12 \log \left| \frac{w_1(z;\xi)}{w_2(z;\xi)}\right|,  \label{eq: log1 2cc}\\
U^{\mu_{1}^\xi}(z)-2U^{\mu_{2}^\xi}(z)+U^{\mu_{3}^\xi}(z) &= \frac12 \log \left| \frac{w_2(z;\xi)}{w_3(z;\xi)}\right|, \label{eq: log2 2cc} \\
U^{\mu_{2}^\xi}(z)-2U^{\mu_{3}^\xi}(z) &= \frac12 \log \left| \frac{w_3(z;\xi)}{w_4(z;\xi)}\right|, \label{eq: log3 2cc}
\end{align}
for $\xi \in (0,\xicr)$ with respect to $\xi$.
\bigskip

Integrating \eqref{eq: log1 2cc} from 0 to  $\min (\xicr,1)$ and \eqref{eq: log1 1cc} from $\min (\xicr,1)$ to 1 yields
\[
\ell-2U^{ \nu_1}(z)+U^{\nu_2}(z) = \frac12 \int_0^{\min (\xicr,1)} \log \left| \frac{w_1(z;\xi)}{w_2(z;\xi)}\right| \ud \xi + \int_{\min (\xicr,1)}^1 \log \left| \frac{w_1(z;\xi)}{w_2(z;\xi)}\right| \ud \xi,
\]
for some constant $\ell \in \R$.

Let $x \in \mathbb R$. Since $|w_1(x;\xi)| \geq |w_2(x;\xi)|$ for
every $\xi>0$, we can extend the integration to infinity and
obtain an inequality
\begin{equation}
    \ell-2U^{ \nu_{1}}(x)+U^{\nu_{2}}(x) \leq \widetilde V_1(x),
\label{eq: integrating 1}
\end{equation}
since $\widetilde V_1$ is given by \eqref{eq: V1 def}.
Equality holds in \eqref{eq: integrating 1} if and only if
$|w_1(x;\xi)| = |w_2(x;\xi)|$ for every $\xi > 1$. That is, if and only if
$x \in \Gamma_1(\xi)$ for every $\xi > 1$, which means
\[
x \in \bigcap_{\xi \geq 1}\Gamma_1(\xi) = \Gamma_1(1)=\supp (\nu_1).
\]
The first equality holds since the sets $\Gamma_1(\xi)$ are increasing as $\xi$ increases.
This proves \eqref{eq: el nu 1}.

The proof of \eqref{eq: el nu 3} is similar.

Integrating \eqref{eq: log2 2cc} from $0$ to $\min (\xicr,1)$ and \eqref{eq: log2 1cc}
from $\min (\xicr,1)$ to 1 yields
\begin{multline*}
    U^{ \nu_{1}}(z)-2U^{\nu_{2}}(z)+U^{ \nu_{3}}(z) \\
    = \frac12 \int_0^{\min (\xicr,1)} \log \left| \frac{w_2(z;\xi)}{w_3(z;\xi)}\right| \ud \xi + \int_{\min (\xicr,1)}^1 \log \left| \frac{w_2(z;\xi)}{w_3(z;\xi)}\right| \ud \xi.
\end{multline*}
Since $|w_2(z;\xi)| \geq |w_3(z;\xi)|$ for every $\xi>0$ it follows that
\begin{equation}
U^{ \nu_{1}}(z)-2U^{\nu_{2}}(z)+U^{ \nu_{3}}(z) \geq 0, \quad z \in \mathbb C.
\label{eq: integrating 2}
\end{equation}
Equality holds in \eqref{eq: integrating 2} if and only if $|w_2(z;\xi)| = |w_3(z;\xi)|$ for every $\xi \in (0,1)$. That is, if and only if
\[
z \in  \bigcap_{0<\xi<1}\Gamma_2(\xi) = \Gamma_2(1)=\supp (\widetilde \sigma-\nu_2).
\]
The first equality holds since the sets $\Gamma_2(\xi)$ are decreasing as $\xi$ increases.
This proves \eqref{eq: el nu 2}.

This completes the proof of Theorem \ref{th: integrating}
\end{proof}


\subsection{Auxiliary lemmas}

From Theorem \ref{th: integrating} we know that $(\nu_1, \nu_2, \nu_3)$
is the minimizer for a vector equilibrium problem with external fields and an
upper constraint. To complete the proof of Theorem \ref{th: equilibrium problem}
we evaluate the external fields $\widetilde V_1$ and $\widetilde V_3$
and the  measure $\widetilde \sigma$ and show that they are equal to
the external fields $V_1$ (up to a constant) and $V_3$  and the constraint $\sigma$ that
appear in Theorem \ref{th: equilibrium problem}.

Although the calculations involved may not look too elegant,
we think it is remarkable that they can be performed at all.
We start by defining and investigating two functions.
We define for  $j = 1,2,3,4$,
\begin{equation}
F_j (z, \xi) = z-\tau^2\frac{a(\xi)}{w_j(z;\xi)}-w_j(z;\xi), \qquad \xi > \xicr, \quad z \in \C,
\label{eq: F}
\end{equation}
where $w_j(z;\xi)$ is defined as in subsection \ref{subsec: 1cc}.
If $t < \tau^2$ we also introduce for $j = 1,2,3, 4$,
\begin{equation}
G_j(z,\xi) =  \frac{z \xi}{w_j(z;\xi)+\xi}, \qquad 0 < \xi < \xicr, \quad z \in \C,
\label{eq: G}
\end{equation}
where $w_j(z;\xi)$ is defined by \eqref{eq: wj 2cc}.
We will establish three lemmas concerning these functions.
The first lemma states that $F_j$ and $G_j$ are
antiderivatives of the integrands in \eqref{eq: V1 def} and \eqref{eq: V3 def}.
This explains our interest in these functions.
The second lemma states that $F_j$ and $G_j$ continuously connect
to each other at the boundary point $\xicr$ of their domains of
definition. The third lemma gives the limiting behavior of $F_j(z,\xi)$ and $G_j(z,\xi)$ as $\xi \to 0$.

\begin{lemma} \label{lemma: integration lemma}
Let $'$ denote the partial derivative with respect to $z$.
\begin{itemize}
\item[\rm (a)]  For every  $t \in \R$ and $z \in \C$ the equality
\begin{equation}
\frac{\partial F_j}{\partial \xi}(z,\xi)= \frac{w_j'(z;\xi)}{w_j(z;\xi)}, \qquad \xi > \xicr,
\end{equation}
holds. Here, $w_j(z;\xi)$ is defined as in subsection \ref{subsec: 1cc}.
\item[\rm (b)] For every  $t < \tau^2$ and $z \in \C$ the equality
\begin{equation}
\frac{\partial G_j}{\partial \xi}(z,\xi)= \frac12 \cdot \frac{w_j'(z;\xi)}{w_j(z;\xi)}, \qquad 0 < \xi < \xicr,
\end{equation}
holds. Here, $w_j(z;\xi)$ is defined by \eqref{eq: wj 2cc}.
\end{itemize}
\end{lemma}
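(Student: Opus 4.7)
The plan is to verify both identities by direct computation based on implicit differentiation of the algebraic equations that define $w_j$. In case (a), $w_j(z;\xi)$ satisfies $s_1(w_j;\xi) = z$, while in case (b), $w_j(z;\xi) = \widehat w_j(z^2;\xi)$ satisfies $\widehat s_2(w_j;\xi) = z^2$. Implicit differentiation gives in each case
\[
w_j'(z;\xi) = \frac{1}{\partial_w s_1(w_j;\xi)}, \qquad \partial_\xi w_j(z;\xi) = -\frac{\partial_\xi s_1(w_j;\xi)}{\partial_w s_1(w_j;\xi)},
\]
with the obvious analogues for $\widehat s_2$ (with right-hand side $2z$ in the $z$-derivative) in case (b).

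For part (a), I would differentiate $F_j = z - \tau^2 a(\xi)/w_j - w_j$ directly with respect to $\xi$, substitute the formula for $\partial_\xi w_j$, and set the result equal to $w_j'/w_j$. After multiplying through by $w_j^4 \partial_w s_1(w_j;\xi)$ to clear denominators, and using $b(\xi) = \tau^2 a(\xi)+\xi$ and $c(\xi) = \tau^2 a(\xi)^3$ from \eqref{eq: b}--\eqref{eq: c}, the purely $w_j$-dependent terms cancel, and the remaining condition reduces to the scalar identity
\[
a'(\xi)\bigl(3a(\xi)^2 + \xi\bigr) = a(\xi).
\]
To check this, squaring the definition \eqref{eq: a} of $a(\xi)$ and simplifying yields the rational relation $3a(\xi)^2 - a(\xi)(\tau^2-t) = \xi$, whose derivative in $\xi$ is $a'(\xi)(6a(\xi)+t-\tau^2) = 1$; combining these two identities produces exactly the required relation.

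For part (b), the analogous computation with $G_j = z\xi/(w_j+\xi)$ is actually cleaner. Using $\partial_\xi G_j = z(w_j - \xi \partial_\xi w_j)/(w_j+\xi)^2$ and the implicit derivative formulas for $\widehat s_2$, the target identity $\partial_\xi G_j = \tfrac12 w_j'/w_j$ is equivalent, after clearing denominators and cancelling a common factor of $2z$, to
\[
w_j^2\, \partial_w \widehat s_2(w_j;\xi) + w_j \xi\, \partial_\xi \widehat s_2(w_j;\xi) = (w_j+\xi)^2.
\]
Crucially, this is the evaluation at $w = w_j$ of an identity that holds for all $w \neq 0$. Setting $N(w,\xi) = w^3 \widehat s_2(w;\xi) = (w+\xi)^2\bigl(w^2 + \tau^2(\tau^2-t)w + \tau^4 \xi\bigr)$ from \eqref{eq: s2 hat factorization}, the identity rewrites as the polynomial relation
\[
w\, \partial_w N(w,\xi) + \xi\, \partial_\xi N(w,\xi) - 3 N(w,\xi) = w^2(w+\xi)^2,
\]
which is checked by expanding $N$ and comparing coefficients of $w^j$ for $j=0,\ldots,4$.

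The main obstacle is the algebraic bookkeeping in part (a): after substitution one obtains a degree-$4$ polynomial expression in $w_j$ whose coefficients mix $\tau$, $t$, $a(\xi)$, and $a'(\xi)$, and the cancellation down to the clean relation above is not immediately transparent. The trick is to avoid the explicit square-root formula for $a(\xi)$ and work instead with the rational identity $3a^2 - a(\tau^2-t) = \xi$. Part (b) is more transparent once one recognises that the identity is purely polynomial in $(w,\xi)$, at which point the factored form of $\widehat s_2$ in \eqref{eq: s2 hat factorization} makes verification essentially mechanical.
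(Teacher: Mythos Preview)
Your proposal is correct and follows essentially the same strategy as the paper: implicit differentiation of the defining algebraic equation, followed by verification of an algebraic identity in the coefficients. The only notable packaging difference is that the paper first rescales to $W_j=w_j/a(\xi)$ in part~(a) and to $\widehat W_j=\widehat w_j/\xi$ in part~(b), so that the transformed symbols $S_1$ and $\widehat S_2$ depend on $\xi$ in a very simple way; this makes the intermediate algebra shorter, and in particular the key fact $\partial_\xi\widehat S_2=(W+1)^2/W$ is immediate from \eqref{eq: S2}. Your direct approach avoids the change of variables and instead isolates the scalar relation $a'(\xi)(3a(\xi)^2+\xi)=a(\xi)$ in part~(a) and the polynomial identity $w\,\partial_w N+\xi\,\partial_\xi N-3N=w^2(w+\xi)^2$ in part~(b); both are equivalent to what the paper uses, and your observation that the identity in~(b) holds for all $w$ (not just at the roots $w_j$) is a clean way to see why the factored form \eqref{eq: s2 hat factorization} is exactly what is needed.
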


\begin{proof}
(a) Let $\xi > \xicr$, $z \in \C$ and define $W_j(z;\xi)$ for $j=1,2,3,4$ as
\begin{equation}
    W_j(z;\xi)=\frac{w_j(z;\xi)}{a(\xi)}.
    \label{eq: W w}
\end{equation}
Since $s_1(w_j(z,\xi)) = z$ we find by \eqref{eq: s1} and the explicit
expressions \eqref{eq: b} and \eqref{eq: c} for $b(\xi)$ and $c(\xi)$ that
\begin{equation}
    S_1(W_j(z; \xi); \xi)  =
    a(\xi) \left( W_j(z;\xi)+ \frac{3}{W_j(z;\xi)}\right)+\frac{t}{W_j(z;\xi)}+\frac{\tau^2}{W_j(z;\xi)^3}=z,
    \label{eq: alg eq S1}
\end{equation}
see \eqref{eq: S1} for the definition of $S_1(W; \xi)$.
By \eqref{eq: F} and \eqref{eq: W w}, we can  write
\begin{equation}
    F_j(z,\xi)= z-\frac{\tau^2}{W_j(z;\xi)}-a(\xi)W_j(z;\xi),
\label{eq: F W}
\end{equation}
so that
\begin{equation}
    \frac{\partial F_j}{\partial \xi}(z,\xi)
    =-a'(\xi)W_j(z;\xi)+\left(\frac{\tau^2}{W_j(z;\xi)^2}-a(\xi)\right)\frac{\partial W_j}{\partial \xi}(z;\xi).
\label{eq: Fd}
\end{equation}
Taking the  derivative of \eqref{eq: alg eq S1}  with respect to $\xi$,
we eliminate $a'(\xi)$ from \eqref{eq: Fd} to obtain after some calculations
\begin{equation}
    \begin{aligned}
    \frac{\partial F_j}{\partial \xi}(z,\xi) &
    =\frac{\partial W_j}{\partial \xi}(z;\xi) \frac{\tau^2-t-6a(\xi)}{W_j(z;\xi)^2+3} \\
    & =-\frac{\partial W_j}{\partial \xi}(z;\xi) \cdot \frac{1}{a'(\xi)}\cdot \frac{1}{W_j(z;\xi)^2+3},
    \end{aligned}
\label{eq: int lemma 1}
\end{equation}
where the second equality follows from \eqref{eq: a}.
Calculating partial derivatives of  \eqref{eq: alg eq S1} with respect to $z$ and $\xi$ yields
\begin{align*}
\frac{\partial S_1}{\partial W}\frac{\partial W_j}{\partial z} = 1, \qquad
\frac{\partial S_1}{\partial W}\frac{\partial W_j}{\partial \xi}+\frac{\partial S_1}{\partial \xi}=0,
\end{align*}
from which we obtain
\[
    -\frac{\partial W_j}{\partial \xi}=
        \frac{\partial W_j}{\partial z} \cdot \frac{\partial S_1}{\partial \xi}
        =\frac{\partial W_j}{\partial z} \cdot a'(\xi)\frac{W_j^2+3}{W_j},
\]
where the last equality follows from \eqref{eq: S1}. Now, combine this with \eqref{eq: int lemma 1} to obtain
\[
\frac{\partial F_j}{\partial \xi}(z,\xi)=\frac{W_j'(z;\xi)}{W_j(z;\xi)}.
\]
Recalling \eqref{eq: W w} completes the proof of part (a).

\medskip

(b) The proof of part (b) follows along the same lines. Let $0 < \xi < \xicr$ and $z \in \C$.
Define $ \widehat W_j(z;\xi)$ for $j=1,2,3,4$ by
\begin{equation}
    \widehat W_j(z;\xi)=\frac{\widehat w_j(z;\xi)}{\xi},
\label{eq: W w 2}
\end{equation}
so that $\widehat W_j(z^2; \xi) = \frac{w_j(z;\xi)}{\xi}$, see \eqref{eq: wj 2cc}.
Then by \eqref{eq: G} we can write
\begin{equation}
G_j(z,\xi)= \frac{z}{\widehat W_j(z^2;\xi)+1},
\label{eq: G W}
\end{equation}
so that
\begin{equation}
    \frac{\partial G_j}{\partial \xi}(z,\xi)
    =-\frac{\partial \widehat W_j}{\partial \xi}(z^2;\xi)\frac{z}{\left( \widehat W_j(z^2;\xi)+1 \right)^2}.
\label{eq: int lemma 2}
\end{equation}

Recall the definition \eqref{eq: S2} of the transformed symbol $\widehat S_2$
and note that by the definition \eqref{eq: W w 2}
\begin{equation}
    \widehat S_2(\widehat W_j(z;\xi);\xi)=
    \xi\frac{( \widehat W_j(z;\xi)+1)^2}{\widehat W_j(z;\xi)}-
    t\tau^2\frac{( \widehat W_j(z;\xi)+1)^2}{\widehat W_j(z;\xi)^2}
    +\tau^4\frac{( \widehat W_j(z;\xi)+1)^3}{\widehat W_j(z;\xi)^3}=z,
\label{eq: alg eq S2}
\end{equation}
see \eqref{eq: S2}.
Calculating partial derivatives of \eqref{eq: alg eq S2} with respect to $z$ and $\xi$ yields
\begin{align*}
\frac{\partial \widehat S_2}{\partial W}\frac{\partial \widehat W_j}{\partial z} = 1, \qquad
\frac{\partial \widehat S_2}{\partial W}\frac{\partial \widehat W_j}{\partial \xi}+\frac{\partial \widehat S_2}{\partial \xi}=0.
\end{align*}
Therefore,
\[
-\frac{\partial \widehat W_j}{\partial \xi}=\frac{\partial \widehat W_j}{\partial z} \cdot \frac{\partial \widehat S_2}{\partial \xi}= \frac{\partial \widehat W_j}{\partial z} \cdot \frac{(\widehat W_j+1)^2}{\widehat W_j}
\]
where the last equality follows from \eqref{eq: S2}. If we combine this with \eqref{eq: int lemma 2} we obtain
\[
    \frac{\partial G_j}{\partial \xi}(z,\xi)=z \frac{\widehat W_j'(z^2;\xi)}{\widehat W_j(z^2;\xi)}.
\]
This completes the proof of part (b) because of \eqref{eq: W w 2}.
\end{proof}

\begin{lemma} \label{lemma: continuity lemma}
Assume $t<\tau^2$. Then for $z \in \C$ and $j = 1, 2,3,4$ the equality of the limits
\begin{equation}
    \lim_{\xi \to \xicr-}G_j(z,\xi)=\lim_{\xi \to \xicr+}F_j(z,\xi)
    \label{eq: continuity}
\end{equation}
holds. Here, $F_j$ and $G_j$ are defined as in \eqref{eq: F} and \eqref{eq: G}.
\end{lemma}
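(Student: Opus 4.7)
}
The strategy is to reduce both one-sided limits at $\xicr$ to rational expressions in a single root $w$ of the algebraic equation $s_1(w;\xicr)=z$, and then verify the resulting algebraic identity using the explicit values of $a(\xicr)$, $b(\xicr)$, $c(\xicr)$, and $\xicr$. The bridge between the one-cut and two-cut worlds is supplied by the factorization \eqref{eq: s1 s2}, namely $\widehat s_2(w^2;\xicr)=s_1(w;\xicr)^2$.

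First I would use \eqref{eq: s1 s2} together with the fact that $s_1(\,\cdot\,;\xicr)$ is an odd rational function of $w$: the map $w\mapsto w^2$ is $2\!:\!1$ and identifies the four roots of $s_1(w;\xicr)=z$ (paired as $\pm w$, since $s_1(-w;\xicr)=-s_1(w;\xicr)$) with the four roots of $\widehat s_2(W;\xicr)=z^2$. Because $|w^2|=|w|^2$, the ordering by modulus is preserved under squaring, so for each $j\in\{1,2,3,4\}$,
\[
\lim_{\xi\to\xicr-}w_j(z;\xi)=\widehat w_j(z^2;\xicr)=\Bigl(\lim_{\xi\to\xicr+}w_j(z;\xi)\Bigr)^{\!2},
\]
where on the left $w_j(z;\xi)$ is defined by \eqref{eq: wj 2cc}. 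Set $w:=\lim_{\xi\to\xicr+}w_j(z;\xi)$, so that $w$ is a root of $s_1(w;\xicr)=z$ and the two limits in \eqref{eq: continuity} become, respectively,
\[
\lim_{\xi\to\xicr+}F_j(z,\xi)=z-\frac{\tau^2 a(\xicr)}{w}-w,\qquad
\lim_{\xi\to\xicr-}G_j(z,\xi)=\frac{z\,\xicr}{w^{2}+\xicr}.
\]

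Next I would make the coefficients explicit. From Theorem~\ref{th:asympt recurrence coefficients}(c) one has $a(\xicr)=\tfrac12(\tau^2-t)$, and together with $\xicr=\tfrac14(\tau^2-t)^2$ this gives the crucial identity $\xicr=a(\xicr)^2$. Writing $a:=a(\xicr)$ and using \eqref{eq: b} and \eqref{eq: c}, one obtains $b(\xicr)=\tau^2 a+a^2$ and $c(\xicr)=\tau^2 a^3$. Multiplying $s_1(w;\xicr)=z$ by $w^3$ yields
\[
zw^3 \;=\; w^4+(\tau^2 a+a^2)w^2+\tau^2 a^3 \;=\; (w^2+a^2)(w^2+\tau^2 a),
\]
where the nontrivial factorization is the key algebraic input.

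Finally I would verify \eqref{eq: continuity} by direct substitution. The right-hand limit equals $za^2/(w^2+a^2)$, while the left-hand limit, cleared over the common denominator $w^3$, equals $(zw^3-w^4-\tau^2 a w^2)/w^3$; using the factorization this simplifies to $a^2(w^2+\tau^2 a)/w^3$, and dividing $zw^3=(w^2+a^2)(w^2+\tau^2 a)$ by $(w^2+a^2)w^3$ shows that $a^2(w^2+\tau^2 a)/w^3=za^2/(w^2+a^2)$, which is the required equality. The only real obstacle is the bookkeeping needed to spot the factorization $zw^3=(w^2+a^2)(w^2+\tau^2 a)$; everything else is index matching across \eqref{eq: wj 2cc} and routine algebra.
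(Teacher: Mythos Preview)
Your proposal is correct and follows essentially the same route as the paper: both hinge on the squaring relation from \eqref{eq: s1 s2}, the value $a(\xicr)=\tfrac12(\tau^2-t)$ together with $\xicr=a(\xicr)^2$, and an algebraic verification that the two limits agree. The only difference is organizational: the paper passes to the normalized variables $W_j=w_j/a(\xi)$ and $\widehat W_j=\widehat w_j/\xi$, so that the claim $W_j(z;\xicr)^2=\widehat W_j(z^2;\xicr)$ immediately turns both limits into $z/(W_j^2+1)$, whereas you work directly with $w$ and reach the same conclusion via the factorization $zw^3=(w^2+a^2)(w^2+\tau^2 a)$.
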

\begin{proof}
We claim that
\[
\lim_{\xi \to \xicr+}W_j(z;\xi)^2=\lim_{\xi \to \xicr-}\widehat W_j(z^2;\xi).
\]
To prove the claim, observe that, as a corollary of \eqref{eq: s1 s2},
\[
\lim_{\xi \to \xicr+}w_j(z;\xi)^2=\lim_{\xi \to \xicr-}\widehat w_j(z^2;\xi).
\]
Then, from \eqref{eq: a} it follows that
\begin{equation}
\lim_{\xi \to \xicr+}a(\xi)=\frac{\tau^2-t}{2},
\label{eq: a xicr}
\end{equation}
so that also
\begin{multline*}
\lim_{\xi \to \xicr+}W_j(z;\xi)^2=\left(\frac{\tau^2-t}{2}\right)^{-2}\lim_{\xi \to \xicr+}w_j(z;\xi)^2 \\
 =\lim_{\xi \to \xicr-}\frac{\widehat w_j(z^2;\xi)}{\xicr}=\lim_{\xi \to \xicr-}\widehat W_j(z^2;\xi),
\end{multline*}
which proves the claim.

Note also that \eqref{eq: a xicr} and \eqref{eq: alg eq S1} yield
\begin{equation}
z=\lim_{\xi \to \xicr+}\left(\frac{\tau^2-t}{2}\cdot W_j(z;\xi)+\frac{3\tau^2-t}{2}\cdot\frac{1}{W_j(z;\xi)}+\frac{\tau^2}{W_j(z;\xi)^3}\right).
\label{eq: z xicr}
\end{equation}

Given this, the proof of \eqref{eq: continuity} comes down to a calculation.
Using \eqref{eq: F W} and \eqref{eq: z xicr}, we can rewrite the right-hand side of \eqref{eq: continuity} as
\[
\lim_{\xi \to \xicr+} F_j(z,\xi) =
\lim_{\xi \to \xicr+} \left( \frac{\tau^2-t}{2}\cdot \frac{1}{W_j(z;\xi)}+\frac{\tau^2}{W_j(z;\xi)^3}\right)
= \lim_{\xi \to \xicr+} \frac{z}{W_j(z;\xi)^2+1},
\]
where also the second equality follows from \eqref{eq: z xicr}.
Lemma \ref{lemma: continuity lemma} then follows from the claim and equation \eqref{eq: G W}.
\end{proof}

We also need the limiting behavior of the $F_j$ and $G_j$
functions as $\xi \to 0+$. Here we make a connection with the function
$\omega_1$ that is defined in subsection \ref{subsection: vector equilibrium problem},
and the functions $\omega_2$ and $\omega_3$ that are defined on
the interval $[-x^*, x^*]$ in case $t < 0$.
\begin{lemma} \label{lemma: xi to zero}
\begin{itemize}
\item[\rm (a)] If $t \geq \tau^2$, then
\begin{align}
\lim_{\xi \to 0+} F_1(x,\xi)&= 0, && \text{ for }x \in \R, \label{eq: F1} \\
\lim_{\xi \to 0+} F_2(x,\xi)&= x-\tau \omega_1(x), && \text{ for }x \in \R. \label{eq: F2}
\end{align}
\item[\rm(b)] If $t < \tau^2$, then
\begin{align}
\lim_{\xi \to 0+}G_1(x,\xi)&= 0, && \text{ for }x \in \R, \label{eq: G1} \\
\lim_{\xi \to 0+}G_2(x,\xi)&= x-\tau \omega_1(x), && \text{ for } x \in \R. \label{eq: G2}
\end{align}
If $t<0$, then also for $j=3,4$,
\begin{equation}
    \lim_{\xi \to 0+}G_j(x,\xi)= x-\tau \omega_{j-1}(x), \qquad \text{ for }x \in [-x^*,x^*]. \label{eq: G34}
\end{equation}
\item[\rm (c)] If $t \geq \tau^2$ and $z \in (-i \infty,-iy^*] \cup [iy^*,+i \infty)$,
then
\begin{equation}
\lim_{\xi \to 0+}\left(F_2(z+,\xi)-F_2(z-,\xi)\right)= 2\tau \Re \omega_1(z),  \label{eq: F23}
\end{equation}
where $F_2(z\pm,\xi)=\lim\limits_{h \to 0+}F_2(z\mp h,\xi)$.
\item[\rm (d)] If $t < \tau^2$ and $z \in (-i \infty,-iy^*] \cup [iy^*,+i \infty)$, then
\begin{equation}
\lim_{\xi \to 0+}\left(G_2(z+,\xi)-G_2(z-,\xi)\right)= 2\tau \Re \omega_1(z), \label{eq: G23}
\end{equation}
where $G_2(z\pm,\xi)=\lim\limits_{h \to 0+}G_2(z\mp h,\xi)$.
\end{itemize}
\end{lemma}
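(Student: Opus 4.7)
The proof strategy is to identify, in the limit $\xi \to 0+$, the four solutions of the algebraic equations governing $F_j$ and $G_j$, and to substitute these limits directly into the definitions \eqref{eq: F} and \eqref{eq: G}. The right objects to work with are the transformed symbols $S_1$ from \eqref{eq: S1} and $\widehat{S}_2$ from \eqref{eq: S2} together with the scaled variables $W_j = w_j/a(\xi)$ and $\widehat{W}_j = \widehat{w}_j/\xi$. In both regimes, the scaled equations \eqref{eq: alg eq S1} and \eqref{eq: alg eq S2} pass to well-defined cubic limits that are equivalent to the defining cubic $\omega^3 + t\omega = \tau z$ of \eqref{eq: omegas} via explicit substitutions, and this is what links the limits of the $w_j$ (or $\widehat w_j$) to the $\omega_k(z)$.

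For part (a), $t\geq \tau^2$ implies via \eqref{eq: a} that $a(\xi)\to 0$, so \eqref{eq: alg eq S1} reduces to $t/W + \tau^2/W^3 = z$; the substitution $\omega = \tau/W$ converts this into $\omega^3 + t\omega = \tau z$. Hence one root of $s_1(w;\xi)=z$ escapes the scaling and satisfies $w_1(z;\xi)\to z$, while the other three satisfy $w_j(z;\xi) \sim a(\xi)\tau/\omega_k(z)$ for $k=1,2,3$. Vieta's formulas for the cubic give $|\omega_{2,3}(x)|^2 = \omega_1(x)^2 + t$, so for $t\geq 0$ the real root $\omega_1$ has minimal modulus and hence $w_2 \sim a(\xi)\tau/\omega_1(x)$. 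Plugging into \eqref{eq: F} produces \eqref{eq: F1} and \eqref{eq: F2} at once. Part (b) follows the same pattern: setting $\xi=0$ in \eqref{eq: alg eq S2} yields a cubic in $\widehat W$ which, one checks by direct substitution, is transformed into $\omega^3 + t\omega = \tau z$ via $\widehat W + 1 = z/(z - \tau\omega)$. The ``extra'' quartic root $\widehat w_1 \to z^2 - \tau^2(\tau^2-t)$ gives $G_1\to 0$, while the three roots of the form $\xi\widehat W_j$ yield $G_j = z/(\widehat W_j+1) \to z - \tau\omega_k(z)$ for the appropriate $k$. The assignment $\widehat w_j\leftrightarrow \omega_k$ for $j=2,3,4$ and $k=1,2,3$ is pinned down by the modulus ordering $|\omega_1|\geq|\omega_2|\geq|\omega_3|$ in the three-real-roots regime $t<0$, $|x|<x^*$, combined with continuity of the roots in $z$; one verifies at a convenient reference value (e.g.\ small $x$) that the largest among the three $|\widehat W_j|$ is the one corresponding to $\omega_1$, and similarly for the others, yielding \eqref{eq: G1}, \eqref{eq: G2}, and \eqref{eq: G34}.

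For parts (c) and (d), fix $z=iy$ with $|y|\geq y^*$. The cubic \eqref{eq: omegas} is invariant under $\omega\mapsto -\overline\omega$ whenever $z\in i\mathbb R$, so its three solutions split as one purely imaginary root together with a conjugate-symmetric pair $\omega_1(z)$ and $-\overline{\omega_1(z)}$ of equal modulus, where $\omega_1$ is the one with positive real part. The corresponding $w$-values $a(\xi)\tau/\omega_1$ and $a(\xi)\tau/(-\overline{\omega_1})$ have equal modulus, and by Lemma \ref{lemma: gamma 1cc} (respectively Lemma \ref{lemma: gamma 2cc} in the two-cut regime) they are the limits of $w_2$ and $w_3$ as $z$ approaches the imaginary axis from either side. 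With the convention $F_2(z\pm,\xi)=\lim_{h\to 0+}F_2(z\mp h,\xi)$, tracking $w_2$ continuously from each side of the imaginary axis and applying parts (a)/(b) gives $F_2(z+,\xi)\to z-\tau(-\overline{\omega_1(z)})$ and $F_2(z-,\xi)\to z-\tau\omega_1(z)$, so the difference equals $\tau(\omega_1(z)+\overline{\omega_1(z)})=2\tau\Re\omega_1(z)$, which is \eqref{eq: F23}. Formula \eqref{eq: G23} follows by the same argument with $G_2$ in place of $F_2$.

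The main technical obstacle is the bookkeeping of the correspondence between the modulus-ordered labels $j\in\{1,2,3,4\}$ of the $w_j$ and the labels $k\in\{1,2,3\}$ of the $\omega_k$. Ties in $|w_j|$ occur precisely on $\Gamma_2(\xi)$, which is exactly where parts (c) and (d) compute a jump, so the assignment of limiting values to $w_{2\pm}$ and $w_{3\pm}$ must be tracked by continuity coming in from off $\Gamma_2(\xi)$, rather than read off from the modulus order at the limit.
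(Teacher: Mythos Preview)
Your proposal is correct and follows essentially the same route as the paper. Both approaches substitute $\omega=\tau a(\xi)/w$ in the one-cut case and rewrite the two-cut equation so that the limiting cubic becomes $\omega^3+t\omega=\tau z$; both then read off the limits of $F_j$ and $G_j$ from the limiting roots. Your Vieta argument in part~(a), namely $|\omega_{2,3}(x)|^2=\omega_1(x)^2+t$, is a tidy alternative to the paper's observation that $\omega_1(x;\xi)$ stays real and the cubic has a unique real root; and your substitution $\widehat W+1=z/(z-\tau\omega)$ in part~(b) is equivalent to the paper's direct use of $G=x-\tau\omega$ on the quartic satisfied by the $G_j$.

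Two places deserve a bit more care. In part~(b), the correspondence $\widehat w_j\leftrightarrow\omega_{j-1}$ does \emph{not} follow from the bare modulus ordering, since $|\widehat W|$ and $|\omega|$ are not monotone in one another through $\widehat W+1=x/(x-\tau\omega)$. Your fix --- verify at a reference point and propagate by continuity --- works, but the paper instead reads off the ordering of all four $w_j(x;\xi)$ (hence of the $G_j$) directly from the graph of $\widehat s_2$ in the relevant subregion; this makes the identification of which $G_j$ tends to $0$ and which goes to $x-\tau\omega_k(x)$ completely explicit without any auxiliary ``check at a point.'' In parts~(c)/(d) you invoke parts~(a)/(b), but those were proved only for $x\in\mathbb R$; the paper instead argues directly on the imaginary axis that $w_{2,-}(z;\xi)$ has positive real part (by continuity from the right half plane, where $w_2$ is never purely imaginary), so $\omega_{1,-}(z;\xi)\to\omega_1(z)$ in the sense of the definition of $\omega_1$ on $\supp\sigma$, and the jump then drops out of $F_2(z+,\xi)-F_2(z-,\xi)=-2\Re F_2(z-,\xi)$. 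Your symmetry observation $\omega\mapsto-\overline\omega$ is exactly what underlies this, so the content is the same; you just need to justify the boundary-value identification without appealing to the real-axis statements.
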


\begin{proof}

(a)
Let $t \geq \tau^2$. For $\xi > 0$, we define
\begin{equation}
\omega_j(z;\xi)=\frac{\tau a(\xi)}{w_{j+1}(z;\xi)}, \qquad j=0,1,2,3.
\label{eq: omega 1cc}
\end{equation}
Since $s_1(w_j(z;\xi);\xi)=z$, $j=1,2,3,4$, see \eqref{eq: s1}, we obtain that
\eqref{eq: omega 1cc} are the four solutions of the equation
\begin{equation}
 a(\xi) \left( \frac{\tau^2}{\omega} + 3 \omega \right) + t \omega + \omega^3  = \tau z,
\label{eq: s1 omega}
\end{equation}
ordered such that
\[
|\omega_0(z; \xi)| \leq |\omega_1(z; \xi)| \leq |\omega_2(z;\xi)| \leq |\omega_3(z;\xi)|.
\]
As $\xi \to 0+$, we have that $a(\xi) \to 0$. Then,  \eqref{eq: s1 omega} has one solution
that tends to $0$ as well, hence
\[
\lim_{ \xi \to 0+} \omega_0(z; \xi) = 0,
\]
while the other solutions tend to the solutions of the cubic equation
\[
\omega^3+ t \omega  = \tau z,
\]
that we already encountered in \eqref{eq: omegas}.

In terms of \eqref{eq: omega 1cc}, $F_j(z,\xi)$ can be rewritten as
\begin{equation}
F_j(z,\xi) = z-\tau \omega_{j-1}(z;\xi)-\frac{\tau a(\xi)}{\omega_{j-1}(z;\xi)},  \quad j=1,2,3,4.
\label{eq: F lim1}
\end{equation}
Moreover, using \eqref{eq: s1 omega} we get
\[
F_1(z,\xi)=\frac{\omega_{0}(z;\xi)}{\tau}\left( 3a(\xi)+t-\tau^2+\omega_{0}(z;\xi)^2 \right),
\]
from which \eqref{eq: F1} follows by letting $\xi \to 0$.

Let  $x \in \mathbb R$ with $x \neq 0$. Then by Theorem \ref{th: gamma 1cc} (a)
we have $x \notin \Gamma_1(\xi)$ for sufficiently small $\xi$. Thus $w_1(x;\xi)$ and $w_2(x;\xi)$
are real, while $w_3(x;\xi)$ and $w_4(x;\xi)$ are non-real and complex conjugate, see Figure \ref{fig: s1R}.
Hence, by \eqref{eq: omega 1cc}, $\omega_1(x;\xi)$ is real for small $\xi$ and, therefore,
converges as $\xi \to 0$ to a real solution of \eqref{eq: omegas}.
In the present situation there is only one real solution, which was previously defined as $\omega_1(x)$.
Thus $\omega_1(x;\xi) \to \omega_1(x)$ as $\xi \to 0+$ for $x \in \mathbb R$ with $x \neq 0$,
but in view of continuity it also holds for $x = 0$.
Then, \eqref{eq: F2} is obtained by taking the limit  $\xi \to 0$ in \eqref{eq: F lim1}
with $j=1$ and noting that $a(\xi) \to 0$.

\bigskip

(b) We prove part (b) only for $t < 0$ and $x \in [0,x^*]$.
The proof for the other cases follows from similar considerations.

For $t < 0$, we recall that
\begin{equation} \label{eq: s2equation}
    \widehat s_2(w_j(z;\xi);\xi) = z^2, \qquad j=1,2,3,4,
\end{equation}
see Section \ref{subsec: iteration 2cc} and \eqref{eq: wj 2cc}.
By Figure \ref{fig: subregions} we are in the case
$C_{2b}$ for small enough $\xi$. See Figure \ref{fig: s2R2}
for the graph of $\widehat s_2$ in this case.

Let $x \in [0, x^*)$. Then $x^2 < \widehat\delta(\xi)$ for sufficiently small $\xi$ by part (d) of
Theorem \ref{th: gamma 2cc}. It then follows that $w_3(x; \xi)$
and $w_4(x;\xi)$ are real.
We now distinguish two cases depending on whether $x^2$ is smaller
than $\tau^2(\tau^2-t)$ or not.
\begin{itemize}
\item If $x^2 < \tau^2(\tau^2-t)$ then it follows from part (b)
of Theorem \ref{th: gamma 2cc} that $x^2 < \widehat{\beta(\xi)}$ for small $\xi$.
Then by \eqref{eq: s2equation} and Figure \ref{fig: s2R2} we have
\[ w_1(x;\xi) < w_2(x;\xi) < -\xi < w_3(x; \xi ) < w_4(x; \xi) < 0. \]
By the definition \eqref{eq: G} this implies
\begin{equation} \label{eq: Gordering1}
    G_2(x; \xi) < G_1(x; \xi) < 0 < x < G_4(x; \xi) < G_3(x; \xi).
    \end{equation}
\item If $x^2 > \tau^2(\tau^2-t)$ then by part (a) of Theorem \ref{th: gamma 2cc}
we have that $x^2 > \widehat\alpha(\xi)$ for small enough $\xi$.
In this case in Figure \ref{fig: s2R2} the local minimum $\widehat\alpha(\xi)$ at $w_0^*$
is smaller than the local maximum $\widehat\delta(\xi)$ at $w_3^*$.
We then get by \eqref{eq: s2equation} and Figure \ref{fig: s2R2} that
\[ -\xi < w_3(x; \xi) < w_4(x;\xi) < 0 < w_2(x; \xi) < w_1(x;\xi). \]
Thus by \eqref{eq: G}
\begin{equation} \label{eq: Gordering2}
    0 < G_1(x; \xi) < G_2(x;\xi) < x < G_4(x; \xi) < G_3(x; \xi).
    \end{equation}
\end{itemize}

Next, we get from \eqref{eq: s2equation} and \eqref{eq: G}
that the $G_j(x;,\xi)$, $j=1, \ldots, 4$, are the four solutions of
\[
    G^4 - 3xG^3+(3x^2 +t \tau^2+\xi)G^2 + (-x^3+\tau^4 x -t \tau^2 x -2 \xi x)G + \xi x^2 =0.
\]
Letting $\xi \to 0$, we see that one of the $G_j$'s tends to $0$
while the other three tend to the solutions of
\begin{equation} \label{eq: equation G}
    G^3-3xG^2+(3x^2+t \tau^2)G  -x^3+\tau^2(\tau^2 -t)x =0.
\end{equation}
In view of \eqref{eq: Gordering1} and \eqref{eq: Gordering2} it must
be $G_1(x; \xi)$ that tends to $0$ and so we proved  \eqref{eq: G1}.

Now let $\omega_j(x; \xi) = \frac{1}{\tau} (x-G_{j+1}(x; \xi))$,
so that
\begin{equation} \label{eq: omega xi}
    G_j(x; \xi) = x - \tau \omega_{j-1}(x; \xi),
        \qquad  j=2,3,4.
    \end{equation}
It then follows from \eqref{eq: Gordering1} and \eqref{eq: Gordering2}
that in both cases
\begin{equation} \label{eq: omegaordering}
    \omega_1(x; \xi) > 0 > \omega_3(x; \xi) > \omega_2(x; \xi).
    \end{equation}
As $\xi \to 0$, we have that $G_j(x;\xi)$, $j=2,3,4$ tend to the three
solutions of \eqref{eq: equation G}. Then from \eqref{eq: omega xi}
we get that $\omega_j(x;\xi)$, $j=1,2,3$, tend to the solutions of
\begin{equation}
    \omega^3 + t\omega=\tau x. \label{eq: SPE aux}
\end{equation}
In view of \eqref{eq: omegaordering} and the earlier definitions
of $\omega_j(x)$, $j=1,2,3$,  it is then easy to check
that
\[ \lim_{\xi \to 0+}  \omega_j(x; \xi) = \omega_j(x). \]
This proves \eqref{eq: G2} and \eqref{eq: G34} because of \eqref{eq: omega xi}.

(c) Let $t \geq \tau^2$ and $z \in (-i \infty,-iy^*) \cup (i y^*, i \infty)$.
Recall that $\omega_1(z)$ is defined as the solution of $\omega^3+ t \omega  = \tau z$
with positive real part.
As in the proof of part (a) we use again the definition \eqref{eq: omega 1cc}
for $\omega_j(z; \xi)$. These functions are defined with possible
cuts on parts of the real and imaginary axis. We prove that
\begin{equation} \label{eq: limitomega1}
    \lim_{\xi \to 0+} \omega_{1,-}(z;\xi) = \omega_1(z).
    \end{equation}
where $\omega_{1,-}(z; \xi)$ denotes the limiting value on the imaginary
axis taken from the right half plane.

To prove \eqref{eq: limitomega1}, we note that, by Theorem \ref{th: gamma 1cc}(b),
$z \in \Gamma_2(\xi)$ for sufficiently small $\xi$.
Then, $w_{2,+}(z;\xi)$ and $w_{2,-}(z;\xi)$ have the same imaginary part, but opposite real part.
Note that $w_2(x;\xi)$ is positive for sufficiently large $x>0$ and that $w_2( \cdot ;\xi)$ does not take
purely imaginary values for $z$ in the right half plane because it solves the algebraic equation
\eqref{eq: alg eq s1 R}. It then follows from the continuity of $w_2( \cdot ;\xi)$ that $w_{2,-}(z;\xi)$
has positive real part.  Then, by \eqref{eq: omega 1cc}, also $\omega_{1,-}(z,\xi)$ has positive real
part for small enough $\xi$. This proves \eqref{eq: limitomega1}.

Since $w_{2,+}(z;\xi)$ and $w_{2,-}(z;\xi)$ have opposite real part, we know that
\[
F_2(z+,\xi)-F_2(z-,\xi) =-2 \Re F_2(z-,\xi)
 =2\tau \Re \left(\omega_{1,-}(z;\xi)-\frac{\tau a(\xi)}{\omega_{1,-}(z;\xi)}\right).
\]
Then, \eqref{eq: F23} follows by letting $\xi \to 0$ and using \eqref{eq: limitomega1}
and $a(\xi) \to 0$.

\bigskip

(d) Part (d) can be proved in a similar way. We do not give details.
\end{proof}

\subsection{Proof of Theorem \ref{th: equilibrium problem}}

Theorem \ref{th: equilibrium problem} follows immediately from Theorems \ref{th: zero
distribution} and \ref{th: integrating} and the following result
that connects  $\widetilde{V}_1$, $\widetilde{V}_3$ and $\widetilde{\sigma}$
as defined in Theorem \ref{th: integrating}, with $V_1$, $V_3$ and $\sigma$
that are defined in subsection \ref{subsection: vector equilibrium problem}.

\begin{theorem}
There is a constant $C$, depending on $t$ and $\tau$ such that
\begin{equation} \label{eq: V1equality}
    \widetilde{V}_1(x) = V_1(x)  + C, \qquad x \in \mathbb R.
    \end{equation}
Furthermore we have
\begin{equation} \label{eq: V3equality}
    \widetilde{V}_3(x) = V_3(x), \qquad x \in \mathbb R,
    \end{equation}
and
\begin{equation} \label{eq: sigmaequality}
    \widetilde{\sigma} = \sigma.
    \end{equation}
\end{theorem}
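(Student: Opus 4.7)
The plan is to prove all three identities by the same device: after differentiating in $x$ (for $\widetilde V_1$ and $\widetilde V_3$) or computing the density through the imaginary-axis jump (for $\widetilde\sigma$), I recognize the resulting $\xi$-integrand as a total $\xi$-derivative by Lemma~\ref{lemma: integration lemma}, namely $w_j'/w_j=\partial_\xi F_j$ on $(\xicr,\infty)$ and $w_j'/w_j=2\,\partial_\xi G_j$ on $(0,\xicr)$. The continuity Lemma~\ref{lemma: continuity lemma} matches the values at $\xicr$, so each integral telescopes to the difference of the boundary contributions at $\xi\to 0+$ and $\xi\to+\infty$. The $+\infty$ boundary always vanishes by a complex-conjugate symmetry among the roots $w_j$, while the $0+$ boundary is computed by Lemma~\ref{lemma: xi to zero} in terms of the cubic roots $\omega_1,\omega_2,\omega_3$ from \eqref{eq: omegas}, reproducing the formulas of Theorem~\ref{th: equilibrium problem}.

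For $\widetilde V_1$, differentiating \eqref{eq: V1 def} in $x$ and telescoping in $\xi$ collapses the sum to
\[
\widetilde V_1'(x)=\Re\lim_{\xi\to+\infty}(F_1-F_2)(x;\xi)-\Re\lim_{\xi\to 0+}(F_1-F_2)(x;\xi),
\]
with $G$'s in place of $F$'s in the second limit when $t<\tau^2$. For fixed $x$ and $\xi$ large, Theorem~\ref{th: gamma 1cc}(a) gives $x\in(-\alpha(\xi),\alpha(\xi))$, so $w_1(x;\xi)$ and $w_2(x;\xi)$ are complex conjugates, $F_1-F_2$ is purely imaginary, and the $+\infty$-limit is zero. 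Lemma~\ref{lemma: xi to zero}(a)--(b) identifies the $0+$-limit as $-(x-\tau\omega_1(x))$, so $\widetilde V_1'(x)=x-\tau\omega_1(x)$. A direct differentiation of \eqref{eq: V1} together with $\omega_1^3+t\omega_1=\tau x$ gives the same expression for $V_1'$, hence $\widetilde V_1=V_1+C$.

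The same telescoping with indices $3,4$ handles $\widetilde V_3$. For $t<0$ and $|x|<x^*$, Lemma~\ref{lemma: xi to zero}(b), formula~\eqref{eq: G34}, gives $\widetilde V_3'(x)=\tau(\omega_2(x)-\omega_3(x))$, agreeing with the direct differentiation of \eqref{eq: V3}. In the complementary cases ($t\geq 0$, or $t<0$ with $|x|\geq x^*$), the cubic \eqref{eq: omegas} has only one real root, which forces the pair $w_3(x;\xi),w_4(x;\xi)$ to be complex conjugate for every $\xi>0$; then $\log|w_3/w_4|\equiv 0$ and the integral \eqref{eq: V3 def} vanishes, matching $V_3\equiv 0$ there. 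Continuity of both $\widetilde V_3$ and $V_3$ at $|x|=x^*$ pins the additive constant to zero. For $\widetilde\sigma=\sigma$ the supports coincide because $\bigcup_{\xi>0}\Gamma_2(\xi)=(-i\infty,-iy^*]\cup[iy^*,i\infty)=\supp(\sigma)$ by monotonicity of $\gamma(\xi)$ from Theorem~\ref{th: gamma 1cc}(b) and its two-cut counterpart. At $z=iy$ with $|y|>y^*$, writing $d\mu_2^\xi/|dz|=\tfrac{1}{2\pi}\partial_\xi(F_{2,-}-F_{2,+})(iy;\xi)$ via Lemma~\ref{lemma: integration lemma}(a), matched at $\xicr$ by Lemma~\ref{lemma: continuity lemma}, and telescoping from $\xi=0$ up to $\xi(y):=\gamma^{-1}(|y|)$ at which $iy$ exits $\Gamma_2(\xi)$ (so the jump vanishes there), one obtains
\[
\frac{d\widetilde\sigma}{|dz|}(iy)=-\frac{1}{2\pi}\lim_{\xi\to 0+}(F_{2,-}-F_{2,+})(iy;\xi)=\frac{\tau}{\pi}\Re\omega_1(iy),
\]
by Lemma~\ref{lemma: xi to zero}(c)--(d), which is exactly \eqref{eq: sigma}.

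The main obstacle I expect is the case bookkeeping for $\widetilde V_3$: one must verify in each of the two-cut subregions $C_{2a},C_{2b},C_{2c}$ that the pair $w_3(x;\xi),w_4(x;\xi)$ is complex conjugate precisely in the regime where the cubic's roots $\omega_2,\omega_3$ are non-real, so that the integrand $\log|w_3/w_4|$ vanishes identically and no spurious constant appears when patching the identity across $|x|=x^*$. A secondary technical point is justifying differentiation under the integral in $x$ and interchanging the jump limit with $\xi\to\xi(y)$ in the $\widetilde\sigma$ computation; both are routine given the explicit algebraic control of $s_1$ and $\widehat s_2$ established in Sections~\ref{sec: 1cc} and~\ref{sec: 2cc}.
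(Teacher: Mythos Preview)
Your proposal is correct and follows essentially the same route as the paper: differentiate in $x$, convert the integrand to $\partial_\xi F_j$ (one-cut) or $2\,\partial_\xi G_j$ (two-cut) via Lemma~\ref{lemma: integration lemma}, match the two pieces at $\xicr$ by Lemma~\ref{lemma: continuity lemma}, and read off the $\xi\to 0+$ boundary from Lemma~\ref{lemma: xi to zero}.

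There is one genuine, if minor, difference in how the upper boundary term is disposed of for $\widetilde V_1$. You let the upper limit run to $+\infty$ and kill $\Re(F_1-F_2)$ there by observing that for large $\xi$ one has $x\in\Gamma_1(\xi)$ and $w_1,w_2$ are complex conjugates. The paper instead truncates the integral at $\xi^*(x)$, the first $\xi$ for which $x\in\Gamma_1(\xi)$: above $\xi^*(x)$ the integrand $\log|w_1/w_2|$ vanishes identically, and at $\xi=\xi^*(x)$ the point $x$ is the branch point $\pm\alpha(\xi^*(x))$, so $w_1(x;\xi^*(x))=w_2(x;\xi^*(x))$ and $F_1-F_2$ is exactly zero. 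Your argument is valid too, but the paper's version sidesteps having to speak of boundary values of $F_j$ across the cut on $\Gamma_1(\xi)$; it also makes the analogous step for $\widetilde\sigma$ (where the cutoff is at $\xi^*(z)$ with $z=\pm i\gamma(\xi^*(z))$) and for $\widetilde V_3$ (cutoff at $\xi^*(x)$ with $x=\pm\delta(\xi^*(x))$) completely parallel.

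On the point you flagged about the $\widetilde V_3$ bookkeeping: the paper does not deduce the complex conjugacy of $w_3,w_4$ from the number of real roots of the cubic \eqref{eq: omegas}. Instead it uses directly the structure of $\Gamma_3(\xi)$ established in Theorems~\ref{th: gamma 1cc} and~\ref{th: gamma 2cc}: for $t\geq 0$, or $t<0$ and $|x|\geq x^*$, one has $x\in\Gamma_3(\xi)$ for every $\xi>0$, hence $|w_3(x;\xi)|=|w_4(x;\xi)|$ identically and $\widetilde V_3(x)=0$. That is the clean way to resolve your anticipated obstacle.
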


\begin{proof}[Proof of \eqref{eq: V1equality}.]
We distinguish two cases.

\paragraph{Case 1: $t \geq \tau^2$.} In this case \eqref{eq: V1 def} takes the simple form
\[
\widetilde V_1(x)=\int_0^\infty \log \left| \frac{w_1(x;\xi)}{w_2(x;\xi)} \right| \ud \xi.
\]
Fix $x \in \R$. As $\Gamma_1(\xi)$ is an unboundedly increasing set, there exists
$\xi^*(x) \geq 0$ such that $x \in \Gamma_1(\xi)$ if $\xi \geq \xi^*(x)$ and $x \not \in \Gamma_1(\xi)$
if $\xi < \xi^*(x)$. If $x \in \Gamma_1(\xi)$, we have that $|w_1(x;\xi)|=|w_2(x;\xi)|$,
so that the upper bound in the integral can be replaced by $\xi^*(x)$. The derivative of $\widetilde{V}_1$
can be written as
\[
    \widetilde{V}_1'(x)= \lim_{\xi_1 \to 0} \int_{\xi_1}^{\xi^*(x)}
    \left(\frac{w_1'(x;\xi)}{w_1(x;\xi)}-\frac{w_2'(x;\xi)}{w_2(x;\xi)}\right) \ud \xi.
\]
Part (a) of Lemma \ref{lemma: integration lemma} now yields
\begin{equation}
    \widetilde{V}_1'(x)= \lim_{\xi_1 \to 0 }\left( F_2(x,\xi_1)- F_1(x,\xi_1)\right)+
    \left(F_1(x,\xi^*(x))-F_2(x,\xi^*(x))\right).
\label{eq: V1 1}
\end{equation}

By definition of $\xi^*(x)$, $x$ is on the boundary of $\Gamma_1(\xi^*(x))$. Therefore,
$x$ is one of the branch points $\pm \alpha(\xi^*(x))$. The algebraic equation $s_1(w;\xi^*(x))=x$
then has the double solution $w_1(x;\xi^*(x))=w_2(x;\xi^*(x))$, so that the last terms of \eqref{eq: V1 1}
vanish, see \eqref{eq: F}. The first terms can be handled by \eqref{eq: F1} and \eqref{eq: F2}. We obtain
\[
    \widetilde{V}_1'(x) = x-\tau \omega_1(x).
\]
Using \eqref{eq: omegas}, we can integrate this equation with respect to $x$
\[
    \widetilde{V}_1(x)=\frac{x^2}{2}- \int \left( 3 \omega_1(x)^3 + t \omega_1(x) \right) \ud \omega_1(x)
    = \frac{x^2}{2}-\frac34 \omega_1(x)^4 - \frac12 t \omega_1(x)^2 + C,
\]
where $C$ is a constant of integration, which proves \eqref{eq: V1equality} in view of \eqref{eq: V1}.

\paragraph{Case 2: $t< \tau^2$.}
$\widetilde{V}_1$ is given by
\[
\widetilde{V}_1(x) = \frac12 \int_0^{\xicr} \log \left| \frac{w_1(x;\xi)}{w_2(x;\xi)}\right| \ud \xi + \int_{\xicr} ^\infty \log \left| \frac{w_1(x;\xi)}{w_2(x;\xi)}\right| \ud \xi.
\]
Fix $x \in \R$. The set $\Gamma_1(\xi)$ is unboundedly increasing if $\xi$ increases.
Therefore, there exists $\xi^*(x) \geq 0$ such that $x \in \Gamma_1(\xi)$ if $\xi \geq \xi^*(x)$
and $x \not \in \Gamma_1(\xi)$ if $\xi < \xi^*(x)$.

Assume that $\xi^*(x) > \xicr$. The other possibility, $\xi^*(x) \leq \xicr$, is simpler
and will be left to the reader. We obtain for the derivative of $\widetilde{V}_1$
\begin{multline*}
    \widetilde{V}_1'(x)=\frac12 \lim_{\xi_1 \to 0+} \lim_{\xi_2 \to \xicr-}
    \int_{\xi_1}^{\xi_2}\left( \frac{w_1'(x;\xi)}{w_1(x;\xi)}-\frac{w_2'(x;\xi)}{w_2(x;\xi)}\right) \ud \xi \\ +\lim_{\xi_3 \to \xicr+} \int_{\xi_3}^{\xi^*(x)}\left( \frac{w_1'(x;\xi)}{w_1(x;\xi)}-\frac{w_2'(x;\xi)}{w_2(x;\xi)}\right) \ud \xi.
\end{multline*}
By Lemma \ref{lemma: integration lemma} we obtain
\begin{multline*}
    \widetilde{V}_1'(x)=\lim_{\xi_1 \to 0+} \left( G_2(x,\xi_1)-G_1(x,\xi_1)\right) \\
    +\lim_{\xi_2 \to \xicr-}\lim_{\xi_3 \to \xicr+} \left( G_1(x,\xi_2)-F_1(x,\xi_3)
    +  F_2(x,\xi_3)-G_2(x,\xi_2)\right) \\
     +  F_1(x,\xi^*(x))-F_2(x,\xi^*(x)).
\end{multline*}
As in the previous case, it follows from the definition of $\xi^*(x)$ that $F_1(x,\xi^*(x)) = F_2(x,\xi^*(x))$.
The limit at $\xicr$ vanishes as a result of Lemma \ref{lemma: continuity lemma}.
So what remains is the limit for $\xi_1 \to 0+$, which by \eqref{eq: G1} and \eqref{eq: G2},
yields
\[
\widetilde{V}_1'(x) =x-\tau \omega_1(x).
\]
This leads to \eqref{eq: V1equality} in the same way as in the other case.
\end{proof}

\begin{proof}[Proof of \eqref{eq: V3equality}.]
From Theorem \ref{th: gamma 1cc} it follows that $\Gamma_3(\xi)=\R$ if $ \xi > \xicr$.
Then, for every $x \in \R$, we have that
$|w_3(x;\xi)|=|w_4(x;\xi)|$. Therefore, the
second integral of \eqref{eq: V3 def} vanishes. We now distinguish
two cases.

\paragraph{Case 1: $t \geq 0$, or $t < 0$ and $|x| \geq x^*$.}
If $t \geq 0$, then  $\Gamma_3(\xi)=\R$ for every $\xi >0$.
If $t < 0$ and $|x| \geq x^*$, then it follows from Theorem \ref{th: gamma 2cc}
that $x \in \Gamma_3(\xi)$ for all $\xi \in (0,\xicr)$.
So in both situations, we have that $|w_3(x;\xi)|=|w_4(x;\xi)|$ for every $x \in \R$ and
$\xi \in (0,\xicr)$. Then, \eqref{eq: V3 def} ensures us that $\widetilde V_3(x) = 0$.
Als $V_3(x) = 0$ in this case, by \eqref{eq: V3 0} and \eqref{eq: V3}.

\paragraph{Case 2: $t < 0$ and $|x| < x^*$.}

If $|x| < x^*$, there exists $\xi^*(x) < \xicr$ such that $x \in \Gamma_3(\xi)$ if
$\xi^*(x) \leq \xi$ and $x \not \in \Gamma_3(\xi)$ if
$\xi < \xi^*(x)$, because $\Gamma_3(\xi)$ is an increasing
set. Then we obtain
\[
    \widetilde V_3'(x)= \frac12 \lim_{\xi_1 \to 0}\int_{\xi_1}^{\xi^*(x)}\left(
    \frac{w_3'(x;\xi)}{w_3(x;\xi)}-\frac{w_4'(x;\xi)}{w_4(x;\xi)}\right)
    \ud \xi.
\]
Applying part (b) of Lemma \ref{lemma: integration lemma} yields
\[
\widetilde V_3'(x) =
    \lim_{\xi_1 \to 0+} \left( G_4(x,\xi_1)-G_3(x,\xi_1) \right) +
        (G_3(x,\xi^*(x))-G_4(x,\xi^*(x)).
\]
The last two of terms cancel each other because $x=\pm \delta(\xi^*(x))$
and $G_3(x, \xi^*(x)) = G_4(x, \xi^*(x))$
by definition of $\xi^*$. The limit for $\xi_1 \to 0+$ is calculated using
\eqref{eq: G34} and it follows that
\[
   \widetilde V'_3(x)=\tau (\omega_2(x)-\omega_3(x)).
\]
Integrating with respect to $x$ yields
\[
    \widetilde V_3(x) = \frac34  \omega_2(x)^4 + \frac12 t  \omega_2(x)^2-\frac34  \omega_3(x)^4
    - \frac12 t  \omega_3(x)^2+C,
\]
where $C$ is a constant of integration. Substituting $x= \pm x^*$ one can check that $C=0$.
This proves \eqref{eq: V3equality} in view of \eqref{eq: V3}.
\end{proof}

\begin{proof}[Proof of \eqref{eq: sigmaequality}.]
By \eqref{eq: sigma def} we have
\[ \supp(\widetilde \sigma) = \overline{\bigcup_{\xi > 0} \supp(\mu_2^{\xi})}
    = \overline{\bigcup_{\xi > 0} \Gamma_2(\xi)}  \]
and this is either $i \mathbb R$ in case $t \leq 0$, or $(-i \infty, -iy^*]
\cup [iy^*, i\infty)$ in case $t > 0$. This coincides with the support of $\sigma$.

Let $z = iy$ with $y > y^*$. The sets $\Gamma_2(\xi)$ are decreasing as $\xi$
increases, and there is a $\xi^*(z) > 0$ such that
$z \in \Gamma_2(\xi)$ if and only if $\xi \leq \xi^*(z)$.
Then by \eqref{eq: sigma def} we have
\begin{equation} \label{eq: densitysigma}
    \frac{\ud \widetilde \sigma(z)}{\ud z} = \int_0^{\xicr} \frac{\ud \mu_2^{\xi}(z)}{\ud z} \ud \xi,
    \end{equation}
since there is no contribution to the integral for $\xi > \xicr$.

The form of $\frac{\ud \mu_2^{\xi}(z)}{\ud z}$ depends on whether we are in
the one-cut or two-cut case. Let us assume that $t < \tau^2$ and $\xi^*(z) > \xicr$
so that both cases appear in \eqref{eq: densitysigma}. We will not give details
about the other cases, which are simpler.

Since $\xi^*(z) > \xicr$ we split up the integral \eqref{eq: densitysigma}
and we use \eqref{eq: muj 1cc} and \eqref{eq: muj 2cc bis} to obtain
\begin{multline*}
    \frac{\ud \widetilde \sigma(z)}{\ud z}=
        \frac{1}{4\pi i} \int_0^{\xicr} \left( \frac{{w_2'}_-(z;\xi)}{{w_2}_-(z;\xi)} -
        \frac{{w_2'}_+(z;\xi)}{{w_2}_+(z;\xi)}\right) \ud \xi \\
        + \frac{1}{2 \pi i} \int_{\xicr}^{\xi^*(z)} \left( \frac{{w_2'}_-(z;\xi)}{{w_2}_-(z;\xi)}
        - \frac{{w_2'}_+(z;\xi)}{{w_2}_+(z;\xi)}\right) \ud \xi.
\end{multline*}
Applying Lemma \ref{lemma: integration lemma} yields
\begin{multline}
\frac{\ud \widetilde \sigma(z)}{\ud z}
    =\frac{1}{2\pi i} \lim_{\xi_1 \to 0+} \left(G_2(z+,\xi_1)-G_2(z-,\xi_1,)\right) \\
    + \lim_{\xi_2 \to \xicr-}\lim_{\xi_3 \to \xicr+} \left( G_2(z-,\xi_2)-F_2(z,-\xi_3)
        +F_2(z+,\xi_3)-G_2(z+,\xi_2)\right) \\ + F_2(z-,\xi^*(z))-F_2(z+,\xi^*(z)).
\label{eq: sigma 1}
\end{multline}

By definition of $\xi^*(z)$, $z$ is on the boundary of $\Gamma_2(\xi^*(z))$.
Therefore, $z$ is the branch point $i\gamma(\xi^*(z))$. The algebraic equation
$s_1(w;\xi)=z$ then has the double solution $w_2(z;\xi^*(z))=w_3(z;\xi^*(z))$,
so that the last two terms of \eqref{eq: sigma 1} cancel each other.
Also the limit at $\xicr$ vanishes as a result of Lemma \ref{lemma: continuity lemma}.
The limit as $\xi_1 \to 0+$ is calculated using \eqref{eq: G23} which gives
\[
    \frac{\ud \widetilde \sigma(z)}{\ud z} = \frac{\tau}{\pi i}\Re \omega_1(z).
\]
By symmetry, the same formula is valid for $z = -iy$ with $y > y^*$.
This proves the equality \eqref{eq: sigmaequality} in view of the
definition  \eqref{eq: sigma} of $\sigma$.
\end{proof}

\end{document}